\newcommand\mathens[1]{\mathbb{#1}} 
\newcommand{\ud}{\mathrm{d}}
\newcommand{\F}{\mathens{F}}
\newcommand{\N}{\mathens{N}}
\newcommand{\Z}{\mathens{Z}}
\newcommand{\Q}{\mathens{Q}}
\newcommand{\R}{\mathens{R}}
\newcommand{\C}{\mathens{C}}
\newcommand{\D}{\mathens{D}}
\newcommand{\T}{\mathens{T}}
\newcommand{\U}{\mathens{U}}
\newcommand{\CP}{\C\mathrm{P}}
\newcommand\sphere[1]{\mathens{S}^{#1}}
\newcommand\proj[1]{\mathens{P}(#1)}
\newcommand{\la}{\left\langle}
\newcommand{\ra}{\right\rangle}
\DeclareMathOperator{\card}{Card}
\newcommand{\ham}{\mathrm{Ham}}
\newcommand{\id}{\mathrm{id}}
\DeclareMathOperator\im{im}
\DeclareMathOperator\ind{ind}
\DeclareMathOperator\order{order}
\DeclareMathOperator\fix{Fix}
\newcommand\compi{\circledast}
\newcommand\compii{\diamond}
\newtheorem{thm}{Theorem}[section]
\newtheorem{lem}[thm]{Lemma}
\newtheorem{cor}[thm]{Corollary}
\newtheorem{prop}[thm]{Proposition}
\newtheorem{prop-def}[thm]{Definition-proposition}
\theoremstyle{definition}
\theoremstyle{remark}
\newcommand\action{\mathcal{T}}
\newcommand\lochom{\mathrm{C}}
\newcommand\betamax{\beta}
\newcommand\betatot{\beta_{\mathrm{tot}}}
\newcommand\pj{\mathrm{pj}}
\newcommand\bepsilon{{\boldsymbol{\varepsilon}}}
\newcommand\bdelta{{\boldsymbol{\delta}}}
\newcommand\bsigma{{\boldsymbol{\sigma}}}
\newcommand\bB{{\mathcal{B}}}
\newcommand\bq{{\mathbf{q}}}
\newcommand\RS{{\R^*_+\times S^1}}
\let\@wraptoccontribs\wraptoccontribs\makeatother
\begin{document}

\title
{On the Hofer-Zehnder conjecture on weighted projective spaces}

\author[S. Allais]{Simon Allais}
\address{Simon Allais, Université de Paris,
IMJ-PRG,
\newline\indent  8 place Aurélie de Nemours,
75013 Paris, France}
\email{simon.allais@imj-prg.fr}
\urladdr{http://perso.ens-lyon.fr/simon.allais/}

\date{January, 2021}
\subjclass[2010]{70H12, 37J10, 58E05, 53D20}
\keywords{Generating functions, Hamiltonian, periodic points,
    symplectic orbifold, weighted projective space,
Hofer-Zehnder conjecture, barcodes, persistence modules}

\begin{abstract}
    We prove an extension of the homology version of the
    Hofer-Zehnder conjecture proved by Shelukhin to the
    weighted projective spaces which are symplectic orbifolds.
    In particular, we prove that if the number of fixed points
    counted with their isotropy order as multiplicity
    of a non-degenerate Hamiltonian diffeomorphism of such a space
    is larger than the minimum number possible,
    then there are infinitely many periodic points.
\end{abstract}
\maketitle

\section{Introduction}

We are interested in the study of the Hofer-Zehnder conjecture
in a specific class of symplectic orbifolds: the weighted projective
spaces.
These spaces appear as symplectic reductions: let us fix a tuple of
weights $\bq = (q_0,\ldots,q_d)\in (\N^*)^{d+1}$ and define
the Hamiltonian map $K_\bq : \C^{d+1}\to \R$,
\begin{equation*}
    K_\bq(z) := \pi \sum_{j=0}^d q_j|z_j|^2,
\end{equation*}
this Hamiltonian induces the $S^1$-action on $\C^{d+1}$
defined by $\bar{t}\cdot (z_j) := (e^{2i\pi q_j t}z_j)$ and
preserving the weighted sphere $\sphere{}(\bq) := K_\bq^{-1}(\pi)$,
the weighted projective space with weights $\bq$ is the symplectic
orbifold $\CP(\bq) := \sphere{}(\bq)/S^1$.
The symplectic form $\omega$ of $\CP(\bq)$ is the only 2-form
that satisfies $p^*\omega = i^*\Omega$ where $p:\sphere{}(\bq)\to\CP(\bq)$
is the quotient map, $i:\sphere{}(\bq)\hookrightarrow \C^{d+1}$
is the inclusion and $\Omega := \sum_j \ud x_j\wedge\ud y_j$
is the canonical symplectic form of $\C^{d+1}$.
The study of Hamiltonian dynamics on $\CP(\bq)$ is equivalent
to the study of the Hamiltonian dynamics on $\C^{d+1}\setminus 0$ restricted
to the flows induced by positively $2$-homogeneous Hamiltonian
$(H_t)$ commuting with $K_\bq$.
We study Hamiltonian diffeomorphisms $\varphi\in\ham(\CP(\bq))$,
which are time-one flows of this dynamics.
The case where all the weights $q_j$'s equal $1$ correspond
to the dynamics on the complex projective space $\CP^d$,
it was proved by Fortune-Weinstein that the number
of fixed points of such a diffeomorphism is always
$\geq d+1$, as was conjectured by Arnol'd \cite{For85}.
We first show the following
generalization of Fortune-Weinstein theorem.
Let $|\bq| := \sum_j q_j$ and for $z\in\CP(\bq)$ let us
denote $\order(z)\in\N$ the order of the isotropy group
of any lift of $z$ to $\sphere{}(\bq)$.

\begin{thm}\label{thm:arnold}
Every Hamiltonian diffeomorphism $\varphi$ of $\CP(\bq)$ satisfies
\begin{equation*}
\sum_{x\in\fix(\varphi)} \order(x) \geq |\bq|.
\end{equation*}
\end{thm}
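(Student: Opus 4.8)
The plan is to reduce the statement, by an elementary arithmetic manipulation, to the Fortune–Weinstein inequality applied to sub‑weighted‑projective‑spaces of $\CP(\bq)$. First one may assume $\fix(\varphi)$ is finite, the inequality being vacuous otherwise, and one works with the standard lift: $\varphi$ is induced on $\CP(\bq)=\sphere{}(\bq)/S^1$ by the time‑one map $\widetilde\Phi$ of the flow of a positively $2$-homogeneous Hamiltonian $(H_t)$ on $\C^{d+1}\setminus\{0\}$ satisfying $\{H_t,K_\bq\}=0$; thus $\widetilde\Phi$ preserves $\sphere{}(\bq)$ and commutes with the weighted rotations $R_t\colon(z_j)\mapsto(e^{2i\pi q_jt}z_j)$.

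The arithmetic step: writing $e(\ell):=\card\{m\in\{1,\dots,\ell\}:\gcd(m,\ell)=1\}$ for Euler's totient, one has $k=\sum_{\ell\mid k}e(\ell)$ and hence
\begin{equation*}
\sum_{x\in\fix(\varphi)}\order(x)=\sum_{\ell\ge 1}e(\ell)\,\card\{x\in\fix(\varphi):\ell\mid\order(x)\},
\qquad
|\bq|=\sum_{j=0}^d\sum_{\ell\mid q_j}e(\ell)=\sum_{\ell\ge1}e(\ell)\,a_\ell,
\end{equation*}
where $a_\ell:=\card\{j:\ell\mid q_j\}$. So it suffices to show $\card\{x\in\fix(\varphi):\ell\mid\order(x)\}\ge a_\ell$ for every $\ell\ge1$ (trivial when $a_\ell=0$).

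Fix $\ell$ with $a_\ell\ge1$. The coordinate subspace $\Pi_\ell:=\{z:z_j=0\text{ whenever }\ell\nmid q_j\}$ equals $\fix(R_{1/\ell})$, so $\widetilde\Phi$, commuting with $R_{1/\ell}$, preserves it; hence $\widetilde\Phi$ preserves $\sphere{}(\bq)\cap\Pi_\ell=\sphere{}(\bq_\ell)$, where $\bq_\ell:=(q_j)_{j:\ell\mid q_j}$, and the restriction of $(H_t)$ to $\Pi_\ell$ exhibits the induced map $\varphi_\ell$ on $\CP(\bq_\ell)$ as a Hamiltonian diffeomorphism. For $x\in\fix(\varphi)$ with lift $z$, one has $\ell\mid\order(x)=\gcd\{q_j:z_j\neq0\}$ iff $z\in\Pi_\ell$ iff $x\in\CP(\bq_\ell)$; therefore $\{x\in\fix(\varphi):\ell\mid\order(x)\}=\fix(\varphi_\ell)$. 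It thus remains to prove the Fortune–Weinstein-type bound that a Hamiltonian diffeomorphism of a weighted projective space with $a$ weights has at least $a$ fixed points, and then to sum the resulting inequalities weighted by $e(\ell)$.

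The main obstacle is exactly this last bound, which I would establish by generating functions. One lifts the diffeomorphism to $\C^{a}$, decomposes the lifted time‑one map into small pieces and composes the associated generating functions to obtain an $S^1$-invariant, positively $2$-homogeneous generating function, quadratic at infinity in the auxiliary variables, whose critical $S^1$-orbits on the weighted sphere correspond to the fixed points; after passing to the $S^1$-quotient — an orbifold vector bundle over the weighted projective space — one runs a Lusternik–Schnirelmann minimax argument, using that the rational cohomology ring is $\Q[u]/(u^{a})$, so the rational cup-length is $a-1$. The delicate points are that the lifted map is not $C^0$-close to the identity, the relevant sublevel sets are noncompact, and the minimax theory must be carried out on an orbifold rather than a manifold; this is where the bulk of the work lies. (Equivalently, the whole count can be organized as a single generating-function theory over the inertia orbifold of $\CP(\bq)$, whose total rational Betti number is $\sum_\ell e(\ell)a_\ell=|\bq|$.)
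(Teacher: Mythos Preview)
Your arithmetic reduction via Euler's totient and the restriction to the fixed loci $\CP(\bq_\ell)$ is correct and attractive, and genuinely different from the paper's route. But it converts the \emph{weighted} inequality on $\CP(\bq)$ into the \emph{unweighted} Fortune--Weinstein bound on every $\CP(\bq_\ell)$: any Hamiltonian diffeomorphism of a weighted projective space with $a$ weights has at least $a$ fixed points (plain count). This is precisely the claim the paper flags as unproven in the introduction (the footnote on \cite{Lu08}), and the paper does not establish it. Its own argument sidesteps the unweighted count entirely: Theorem~\ref{thm:spectral} produces $|\bq|$ pairwise distinct spectral values $c_k(\bsigma)$ in $[0,1)$, each an action value, and Section~\ref{se:action} shows that a fixed point $x$ contributes exactly $\order(x)$ action values to $[0,1)$; hence $|\bq|\le\sum_x\order(x)$ directly.

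The real gap is in your Lusternik--Schnirelmann sketch for the unweighted bound. In the Fortune--Givental--Th\'eret setup one searches for pairs $(z,t)$ with $\Phi(z)=\rho_{\bq_\ell}(t)z$, and a fixed point $x$ with $\order(x)=m$ produces $m$ distinct solutions $t\in[0,1)$; the minimax argument therefore counts each such $x$ with multiplicity $m$ and delivers only the weighted sum, not $\card\fix(\varphi_\ell)$. Using instead a \emph{single} generating function of $\Phi$ does not help either: its critical $S^1$-orbits correspond only to lifts with $\Phi(z)=z$, missing those fixed points of $\varphi_\ell$ whose lifted orbit is merely $\rho$-rotated by $\Phi$. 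The delicate points you list (non-small $\Phi$, noncompact sublevels, orbifold LS) are technical and standardly handled; the genuine obstacle is exhibiting a variational problem whose critical set is in honest bijection with $\fix(\varphi_\ell)$, and your outline does not address this --- it is exactly the place ``where the orders of isotropy groups should intervene'' that the paper identifies as the gap in \cite{Lu08}. (Your parenthetical about the inertia orbifold is closer in spirit to what the paper actually does --- its generating-function homology has rank $|\bq|$ per period --- but that again yields the weighted count, not the unweighted one your reduction requires.)
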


Surprisingly, the number of fixed points is replaced by
a weighted count of the fixed points.
Of course, when $\bq=(1,\ldots,1)$, we get the Fortune-Weinstein theorem
back.
In \cite{Lu08}, the author claims that the $d+1$ lower bound
is also satisfied by the unweighted count of the fixed points.
However, the proof contains gaps where the orders of isotropy groups
should intervene\footnote{Just before defining the family
    $\Omega_z$ in the beginning of the proof,
    the fact that $\Pi\circ\tilde{z}_1 = \Pi\circ\tilde{z}_2$
should imply $\order(z_1)(\lambda_1-\lambda_2)\in 2\pi\Z$
rather than $\lambda_1 - \lambda_2 \in 2\pi\Z$.}.
Let us remark that Theorem~\ref{thm:arnold} is obvious when
the $q_j$'s are prime with each other two by two since for every
$j\in\{ 0,\ldots,d\}$, there is only one point $z_j\in\CP(\bq)$
such that $\order(z_j)=q_j$ and every diffeomorphism
preserves the order.
Similarly, Theorem~\ref{thm:arnold} is a consequence of the
Fortune-Weinstein theorem when $\bq$ satisfies: $\forall i, j$,
either $q_i = q_j$ or $q_i$ is prime with $q_j$
(by considering the restriction of $\varphi$ to the weighted projective
subspace $\{ z\ |\ \order(z) = q_j\}$ for each $j$).

The main subject of our article is the study of periodic points of
$\varphi\in\ham(\CP(\bq))$, which are fixed points of $\varphi^k$
for some $k\in\N^*$.
On the tori $\T^{2d}$,
Conley conjectured that
every Hamiltonian diffeomorphism has infinitely many periodic points.
This statement was proven by Hingston
\cite{Hin09} after decades of advances \cite{CZ86,SZ92,FH03,LeC06}
and then generalized to a large class of symplectic manifolds
by Ginzburg \cite{Gin10}, Ginzburg-Gürel 
\cite{GG12,GG15,GG19} and Orita \cite{Ori19}.
However, the Conley conjecture does not hold in $\CP(\bq)$:
the Hamiltonian diffeomorphism
\begin{equation*}
    \left[z_0 : z_1 : \cdots : z_d \right] \mapsto
    \left[e^{2i\pi a_0}z_0 : e^{2i\pi a_1}z_1 : \cdots : e^{2i\pi
    a_d}z_d \right],
\end{equation*}
with rationally independent 
$a_0,\ldots, a_d\in\R/\Z$, have only $d+1$ periodic points:
the projection of the canonical base.
In this case, one has equality at Theorem~\ref{thm:arnold}.
Hofer-Zehnder conjectured that the only case for which a Hamiltonian
diffeomorphism of a symplectic manifold can have finitely many periodic points is
when its periodic points are fixed and in the minimal number
possible \cite[p.~263]{HZ94}.
The conjecture was inspired by a theorem of Franks
showing that every area preserving homeomorphism isotopic to identity
has 2 or infinitely many periodic points (which implies
the Hofer-Zehnder conjecture in $\CP^1$) \cite{Fra92,Fra96}.
Collier \emph{et al.} gave a proof of Franks theorem in the case of
$\ham(\CP^1)$ using symplectic tools \cite{CKRTZ}.
The higher achievement in proving this conjecture is
Shelukhin's theorem showing a homology version of this conjecture
in a class of symplectic manifolds including $\CP^d$ \cite{She19}
(see also \cite{CGG20,OnHoferZehnderGF}).
In this article, we prove an extension of this theorem to
the weighted projective spaces.
Following Shelukhin, we introduce a homology count of the fixed points
of $\varphi\in\ham(\CP(\bq))$:
\begin{equation*}
    N(\varphi;\F) := \sum_{x\in\fix(\varphi)} O(z;\F) \in \N,
\end{equation*}
where the $O(z;\F)\in\N$ are numbers linked to the local homology groups
of the fixed points $z$ and depending on a coefficient field $\F$
(see (\ref{eq:N}) for a precise definition of the homology count,
the precise definition of $N(\varphi;\F)$ also depends on
the choice of isotopy from $\id$ to $\varphi$).
In the case where the fixed point $z$ is non-degenerate
(\emph{i.e.} $1$ is not an eigenvalue of $\ud\varphi(z)$),
$O(z;\F) = \order(z)$.
Therefore, if every fixed point is non-degenerate, $N(\varphi;\F)$
is the weighted count of fixed points used in Theorem~\ref{thm:arnold}.

\begin{thm}\label{thm:main}
    Every Hamiltonian diffeomorphism $\varphi$ of $\CP(\bq)$
    such that $N(\varphi;\F) > |\bq|$ for some field $\F$ whose characteristic is
    either $0$ or prime with each $q_j$ has infinitely many
    periodic points.
    Moreover,
    when $\varphi$ has finitely many fixed points,
    if $\F$ has characteristic $0$ in the former assumption, there exists $A\in\N$ such
    that, for all prime $p\geq A$, $\varphi$ has a $p$-periodic point
    that is not a fixed point;
    if $\F$ has characteristic $p\neq 0$, $\varphi$ has infinitely many
    periodic points whose period belongs to $\{ p^k\ |\ k\in\N \}$.
\end{thm}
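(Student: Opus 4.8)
The plan is to follow Shelukhin's proof of the Hofer--Zehnder conjecture \cite{She19}, transported to the orbifold setting via generating functions and made $S^1$-equivariant, so that the weighted sphere $\sphere{}(\bq)$ (equivalently $\C^{d+1}\setminus 0$ with the $K_\bq$-action) plays the role of the Hopf total space over $\CP^d$. Assume, for contradiction, that $\varphi\in\ham(\CP(\bq))$ has only finitely many periodic points while $N(\varphi;\F)>|\bq|$ for a field $\F$ of characteristic $0$ or prime with every $q_j$. The set of periods being finite, after replacing $\varphi$ by $\varphi^m$ for a suitable $m$ we may assume $\fix(\varphi^k)=\fix(\varphi)$ for all $k\geq 1$; and since the homology count of an iterate with the same fixed-point set is at least that of $\varphi$, the inequality $N(\varphi;\F)>|\bq|$ still holds. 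For the ``moreover'' part one does not iterate, but argues one prime at a time.

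For each $\psi\in\{\varphi^k:k\geq 1\}$ I would construct an $S^1$-equivariant generating function quadratic at infinity and extract from its filtered homology a persistence module $V(\psi;\F)$ with barcode $\mathcal{B}(\psi;\F)$. The structural statements to prove are: (i) the infinite bars of $\mathcal{B}(\psi;\F)$ have total multiplicity equal to $\dim_\F H^*_{\mathrm{CR}}(\CP(\bq);\F)=|\bq|$, the second equality using exactly that $\mathrm{char}\,\F$ is prime with every $q_j$; (ii) $N(\psi;\F)=|\bq|+2b(\psi;\F)$, where $b(\psi;\F)\geq 0$ is the number of finite bars counted with multiplicity, together with the identity $O(z;\F)=\order(z)$ at a non-degenerate fixed point $z$; (iii) every finite bar of $\mathcal{B}(\psi;\F)$ has length at most a constant $C(\bq)$ depending only on $\bq$ (a uniform bound on the boundary depth, coming from the monotone symplectic geometry of $\CP(\bq)$). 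By the hypothesis and (ii), $V(\varphi;\F)$ has at least one finite bar, of some length $\ell>0$.

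The core of the argument is a Smith-type inequality comparing $\mathcal{B}(\psi;\F)$ and $\mathcal{B}(\psi^p;\F)$ for a prime $p$ equal to $\mathrm{char}\,\F$ (or, if $\mathrm{char}\,\F=0$, to any large enough prime). Using the tautological $\Z/p$-action on the $p$-fold iterate together with the localization theorem in $\Z/p$-equivariant generating-function homology, one shows that each finite bar of $V(\varphi;\F)$ of length $\ell$ gives rise to a finite bar of $V(\varphi^p;\F)$ of length $p\ell$, iteration dilating the relevant action spectrum. Combined with (iii) applied to $\varphi^p$ this gives $p\ell\leq C(\bq)$; letting $p\to\infty$ in characteristic $0$, or iterating the inequality along $p,p^2,p^3,\dots$ in characteristic $p$, forces $\ell=0$ — a contradiction. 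For the ``moreover'': in characteristic $0$, the same estimate shows that whenever $\fix(\varphi^p)=\fix(\varphi)$ one has $p\ell\leq C(\bq)$, so for every prime $p>C(\bq)/\ell=:A$ the map $\varphi$ must possess a point of minimal period $p$; in characteristic $p\neq 0$, were $\fix(\varphi^{p^k})$ eventually constant one could iterate the Smith inequality to bound $p^k\ell$, so $\varphi$ has periodic points of period $p^k$ for infinitely many $k$.

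I expect the main obstacle to be the Smith inequality in the orbifold framework: one must build the $\Z/p$-equivariant generating function for $\varphi^p$ on $\CP(\bq)$ while keeping track of the isotropy weights, run the localization argument over $\F$, and verify that the arithmetic condition on $\mathrm{char}\,\F$ is precisely what makes the fixed locus of the $\Z/p$-action contribute the expected shifted copy without spurious torsion coming from the isotropy groups, and finally reconcile the outcome with the local numbers $O(z;\F)$. A secondary but still delicate point is (i): identifying the infinite part of the equivariant barcode with the orbifold cohomology of $\CP(\bq)$ and computing its dimension to be $|\bq|$.
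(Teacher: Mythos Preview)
Your overall architecture matches the paper's: persistence module from generating functions, the identity $N=|\bq|+2K$ with $K$ the number of ($\Z$-orbits of) finite bars, a uniform bound $\betamax<1$ on bar length, and a Smith-type inequality for the $p$-th iterate. The gap is in how you state and use the Smith step.

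What the Smith inequality actually yields (and what the paper proves, Corollary~\ref{cor:betatotSmith}) is an inequality on the \emph{total} length of finite bars,
\[
\betatot(\bsigma^p;\F_p)\ \geq\ p\,\betatot(\bsigma;\F_p),
\]
obtained from $\dim G_*^{(pa,pb)}(\bsigma^p;\F_p)\geq \sum_r \dim G_*^{(a+r/p,b+r/p)}(\bsigma;\F_p)$ via an integral formula. There is no bar-by-bar dilation: a single finite bar of length $\ell$ for $\varphi$ need not survive as a single bar of length $p\ell$ for $\varphi^p$; it may fragment into many short bars. Hence your inequality ``$p\ell\leq C(\bq)$'' does not follow, and the contradiction as written does not close. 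The correct conclusion from $\betatot(\bsigma^p;\F_p)\geq p\,\betatot(\bsigma;\F_p)$ together with $\betamax(\bsigma^p;\F_p)<1$ is that the \emph{number} of finite bars $K(\bsigma^p;\F_p)$, and thus $N(\bsigma^p;\F_p)$, tends to infinity. To finish, one then needs a uniform bound on the contribution of each fixed point to $N(\bsigma^p;\F_p)$ over all primes $p$; this is provided by a Gromoll--Meyer type statement (Proposition~\ref{prop:gromollmeyer} and Corollary~\ref{cor:gromollmeyer}), which your outline does not invoke. In the characteristic-$p$ branch the paper uses the same device in reverse: after passing to an iterate making all relevant periodic points admissible fixed points, Proposition~\ref{prop:gromollmeyer} forces $N(\bsigma^{p^k};\F_p)=N(\bsigma;\F_p)$, contradicting the divergence of $\betatot$.

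A second, smaller omission in the characteristic-$0$ case: the Smith inequality lives over $\F_p$, so one must pass from $\betatot(\bsigma;\Q)$ to $\betatot(\bsigma;\F_p)$ for large primes $p$. The paper does this by a universal-coefficient argument (Proposition~\ref{prop:universalcoef}); your sketch applies Smith directly ``over $\F$ of characteristic $0$'', which is not how the localization works.
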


Using the fact that $O(z;\F)=\order(z)$, we get the following answer
to the generalized Hofer-Zehnder conjecture in the non-degenerate case.

\begin{cor}
    Every Hamiltonian diffeomorphism $\varphi$ of $\CP(\bq)$ such that
    \begin{equation*}
        \sum_{\substack{x\in\fix(\varphi)\\ x \text{ non-degenerate}}}
        \order(x) > |\bq|
    \end{equation*}
    has infinitely many periodic points.
\end{cor}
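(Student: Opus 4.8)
The plan is to read the corollary off from Theorem~\ref{thm:main}, using only the two properties of the local contributions $O(z;\F)$ recalled in the introduction: they are nonnegative integers, and $O(z;\F) = \order(z)$ as soon as the fixed point $z$ is non-degenerate.

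First I would treat separately the case where $\varphi$ has infinitely many fixed points: there is then nothing to prove, since fixed points are in particular periodic points. So I may assume $\fix(\varphi)$ is finite, which guarantees that the homology count $N(\varphi;\F) = \sum_{x\in\fix(\varphi)} O(x;\F)$ is a well-defined finite quantity (for whatever isotopy from $\id$ to $\varphi$ one fixes, and any coefficient field $\F$).

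Then I would take $\F = \Q$, which has characteristic $0$ and therefore meets the hypothesis of Theorem~\ref{thm:main}. Discarding the nonnegative contributions of the possibly degenerate fixed points and using $O(x;\Q) = \order(x)$ on the non-degenerate ones, I would bound
\begin{equation*}
    N(\varphi;\Q) \;=\; \sum_{x\in\fix(\varphi)} O(x;\Q)
    \;\geq\; \sum_{\substack{x\in\fix(\varphi)\\ x\text{ non-degenerate}}} O(x;\Q)
    \;=\; \sum_{\substack{x\in\fix(\varphi)\\ x\text{ non-degenerate}}} \order(x)
    \;>\; |\bq|,
\end{equation*}
the final inequality being exactly the assumption of the corollary. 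Theorem~\ref{thm:main} then produces infinitely many periodic points, which concludes the argument.

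No genuine obstacle arises once Theorem~\ref{thm:main} is available; the only point deserving a word of care is the well-definedness of $N(\varphi;\Q)$, which is precisely why the infinite-fixed-point case is peeled off at the start. One could equally well run the same estimate with $\F = \F_p$ for a large prime $p$ coprime to all the $q_j$, but $\Q$ gives the shortest route.
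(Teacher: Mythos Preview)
Your argument is correct and is exactly the approach the paper intends: the corollary is stated immediately after Theorem~\ref{thm:main} with the one-line justification ``using the fact that $O(z;\F)=\order(z)$'', and your proof simply spells this out, together with the harmless observation that one may assume $\fix(\varphi)$ is finite.
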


The proof of Theorem~\ref{thm:main} is an adaptation
of our proof of the theorem of Shelukhin in the case of $\CP^d$
\cite{OnHoferZehnderGF}.
It is based on the proofs given by Givental and Théret of the Fortune-Weinstein
theorem using generating functions \cite{Giv90,The98},
we mention that
these proofs can be easily adapted to show Theorem~\ref{thm:arnold}.
The technical base of our proof
is prior to Floer theory and does not appeal to
the $J$-holomorphic curve theory:
it relies on finite-dimensional critical point theory and
classical algebraic topology.
The key ideas of the proof of Theorem~\ref{thm:main} are due to Shelukhin:
we are studying a barcode that we can associate to the persistence
module $(G^{(-\infty,t)}_*(\varphi;\F))_t$ induced by
the generating functions homology of $\varphi$
(as for $N(\varphi;\F)$, it also depends on the choice of isotopy
from $\id$ to $\varphi$).
The definition of such homology groups was introduced in \cite{OnHoferZehnderGF}
in the case of $\CP^d$ and is a generating functions counterpart to the
Floer homology groups $H^{(-\infty,t)}_*(\varphi)$
inspired by previous constructions of Viterbo \cite{Vit} and Traynor
\cite{Tray} in the case of compactly supported Hamiltonian
diffeomorphisms of $\C^d$.
The theory of barcodes in symplectic topology
was introduced by Polterovich-Shelukhin in \cite{PS16}.
Adapting Shelukhin's proof, we show that
if the homology count $N(\varphi;\F)$ is greater than $|\bq|$,
the barcode of $(\varphi;\F)$ must contain finite bars.
Since each finite bar must have a length lower than $1$ whereas
the sum of the length of (representatives of) finite bars $\betatot(\varphi^k;\F)$
is diverging to $+\infty$ for good choices of powers $k$ and
fields $\F$ according to a Smith-type inequality,
the number of finite bars 
diverges (more precisely: the number of $\Z$-orbits of finite bars
diverges)
and so does the number of periodic points.

Major works in symplectic topology recently used symplectic orbifolds
\cite{CGHMSS,PS21} and
we are hoping that this study will contribute to a better understanding
of what one should expect of an orbifold Hamiltonian Floer homology theory.
Indeed, our ``weighted'' result is not the first intriguing phenomenon
observed in this topic:
a recent work extending the Floer homology to global quotient orbifolds
(\emph{i.e.} orbifolds obtained as
quotient of a manifold by a finite group, contrary to $\CP(\bq)$)
relates this homology theory to the Chen-Ruan homology \cite{Mir19}.

\subsection*{Organization of the paper}
In Section~\ref{se:preliminaries},
we discuss preliminary results needed for the construction
of the generating functions homology:
homology projective join on weighted projective spaces
and generating functions of $\RS$-equivariant Hamiltonian
diffeomorphisms.
In Section~\ref{se:GFhomology}, we extend the construction
of the generating functions homology for complex projective spaces
to weighted projective spaces.
We then study the spectral invariants associated with
these homology groups and
derive Theorem~\ref{thm:arnold} and the universal
bound on the length of finite bars
(Theorem~\ref{thm:betamax}).
In Section~\ref{se:smith}, we show the Smith-type inequality
satisfied by the length of the finite bars of the barcode
associated with a diffeomorphism $\varphi$.
In Section~\ref{se:proof}, we prove Theorem~\ref{thm:main}.
In Appendix~\ref{se:orbibundle}, we discuss an extension of
the Thom isomorphism and the Gysin long exact sequence
to orbibundles that is needed in our article.

\subsection*{Acknowledgments}
I am grateful to Lisa Traynor who asked me about the possible
extension of my proof to this singular setting.
I am also thankful to Egor Shelukhin, Marco Mazzucchelli,
Vincent Humilière and Sobhan Seyfaddini for their support.
I am especially grateful to Sheila Sandon for the fruitful
related discussions we had as I was invited by her.

\section{Preliminaries}
\label{se:preliminaries}

\subsection{The category of weighted projective spaces}

Let us fix convention and notation about weighted projective spaces.

Every spaces considered here are finite dimensional.
Let $E$ be a complex vector space and $\rho : S^1 \to GL(E)$ a smooth
group morphism defining a linear $S^1$-action of $E$.
Formally a weighted projective space will consist of the data $(E,\rho)$, where
$E$ and $\rho$ are as above, and be denoted $\proj{E,\rho}$.
The group $\RS$ acts on $E\setminus 0$ by
\begin{equation*}
(\lambda,t)\cdot z := \lambda\rho(t)z,\quad \forall (\lambda,t)\in\RS, \forall z\in E\setminus 0.
\end{equation*}
The induced orbifold $(E\setminus 0)/(\RS)$ is naturally associated with $\proj{E,\rho}$ and
we will often identify $\proj{E,\rho}$ with this space, by a slight abuse of notation.
A morphism from $\proj{E,\rho}$ to $\proj{E',\rho'}$ is a class of injective $S^1$-equivariant
linear morphisms $(E,\rho)\to (E',\rho')$ under the equivalence relationship $\sim$ defined by
\begin{equation*}
f\sim g \Leftrightarrow \exists (\lambda,t)\in\RS,  f = \lambda \rho'(t)\circ g.
\end{equation*}
Every morphism induces a natural orbifold map, we will identify morphism and induced map
by a slight abuse of notation.
A weighted projective subspace $P\subset \proj{E,\rho}$ is a projective space $P=\proj{F,\rho'}$
induced by an $S^1$-invariant subspace $F\subset E$ with $\rho'$ the natural restricted action.

Given an $S^1$-action $\rho:S^1\to GL(E)$, there exists a base $(v_0,\ldots,v_n)$
and integers $q_0,\ldots,q_n\in\Z$ such that
\begin{equation*}
\rho(t)v_j := e^{2i\pi t q_j}v_j, \quad \forall t\in S^1,\forall j\in\{
0,\ldots, n\},
\end{equation*}
seeing $S^1$ as $\R/\Z$.
The multiset $\{q_0,\ldots, q_n\}$ is uniquely defined by $(E,\rho)$ and called
the weights of $\proj{E,\rho}$; it defines a fonctor from the category of weighted projective
spaces to the category of multisets.
We will only study weighted projective spaces with positive weights.
An usual (or ``unweighted'') projective space is a weighted projective
space whose weights are all equal to one.
Given $\bq \in (\N^*)^{n+1}$, let $\rho_\bq : S^1 \to GL_{n+1}(\C)$
be such that
\begin{equation}\label{eq:rhoq}
\rho_\bq (t)\varepsilon_j := e^{2i\pi t q_j}\varepsilon_j, \quad \forall t\in
S^1,\forall j\in\{ 0,\ldots, n\},
\end{equation}
where $(\varepsilon_j)$ is the canonical base.
We denote $\CP(\bq) := \proj{\C^{d+1},\rho_\bq}$.
Every weighted projective space with weights $\bq$ is isomorphic to $\CP(\bq)$;
the category of weighted projective spaces up to isomorphism is equivalent to
the category of multisets of $\N^*$.

\subsection{Projective join}

Given two weighted projective spaces $P_j := \proj{E_j,\rho_j}$,
$j\in\{1,2\}$,
their projective join $P_1*P_2$ is the weighted projective space
$\proj{E_1\times E_2,\rho_1\times\rho_2}$.
The spaces $P_1$ and $P_2$ are naturally included in $P_1*P_2$
\emph{via} $E_1\times 0\subset E_1\times E_2$ and
$0\times E_2\subset E_1\times E_2$.
Given subsets $A_j\subset P_j$, one can also define the projective join
$A_1*A_2\subset P_1*P_2$ by $A_1\cup A_2 \cup \pi(\tilde{A}_1\times\tilde{A}_2)$,
where $\pi : (E_1\times E_2)\setminus 0\to P_1*P_2$ is the quotient map
and the $\tilde{A}_j \subset E_j$ are the inverse images of the $A_j$'s under
$E_j\setminus 0 \to P_j$.
Given points $a_j\in A_j$, the projective line $(a_1a_2)\subset P_1*P_2$
will refer to the weighted projective line $\{a_1\} * \{a_2\}$.

Given topological space or pair $X$, $H_*(X)$ and $H^*(X)$ denote
the singular homology and cohomology groups.
When we need to explicit the ring of coefficients $R$, it
will be written $H_*(X;R)$ and $H^*(X;R)$.
In \cite[Appendix~A]{OnHoferZehnderGF}, we defined a natural morphism
$\pj_* : H_*(A\times B)\to H_{*+2}(A*B)$ in the unweighted case
called the homology projective join.
Let us extend this natural map.
Given $A\subset P$ and $B\subset P'$ subsets of weighted projective spaces,
let us define
\begin{equation*}
    E_{A,B} := \{ (a,b,c) \in A\times B\times (A*B)\ |\
        c\in (ab) \},
\end{equation*}
and projection maps $p_1:E_{A,B} \to A\times B$ and
$p_2:E_{A,B} \to A*B$.
\begin{lem}\label{lem:orbibundle}
    At the topological level,
    the map $p_1$ defines a $\CP^1$-orbibundle
    (in the sense of Appendix~\ref{se:orbibundle}),
    with the natural orientation induced by the complex structure.
\end{lem}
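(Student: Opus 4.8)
The plan is to first understand the fibers of $p_1$ concretely. Fix $(a,b) \in A \times B$ and lift to $(\tilde a, \tilde b) \in \tilde A \times \tilde B$. The projective line $(ab) = \{a\} * \{b\}$ is the image in $P * P'$ of the $S^1$-invariant plane $\C\tilde a \oplus \C\tilde b \subset E \times E'$ (minus zero) under the quotient by $\RS$. So the fiber $p_1^{-1}(a,b)$ is naturally identified with the weighted projective line $\proj{\C\tilde a \oplus \C\tilde b, \rho|}$. Now the two weights of this action are $q_a := \order(a)$-type data — more precisely the weight of the $S^1$-action on the line $\C\tilde a$ — and likewise $q_b$; this weighted projective line is, as a topological space with orbifold structure, modelled on $\CP(q_a, q_b)$, which is a $\CP^1$ with (generically) two orbifold points. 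So the fibers are all copies of $\CP^1$ as topological spaces, carrying the orbifold structure of a weighted projective line, and the complex structure gives the orientation. This matches the notion of $\CP^1$-orbibundle from Appendix~\ref{se:orbibundle}.

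Next I would establish local triviality. The key point is that the map $(a, b) \mapsto \C\tilde a \oplus \C\tilde b$ (a well-defined map to a suitable bundle of planes over $A \times B$, built from the tautological data) is locally trivial: locally over $A \times B$ one can choose continuous lifts $a \mapsto \tilde a(a)$ and $b \mapsto \tilde b(b)$ (sections of the $\R^*_+ \times S^1$-bundles $\tilde A \to A$ and $\tilde B \to B$, which exist locally since these are principal-type bundles / the quotient maps admit local sections), and having fixed such lifts, the plane $\C\tilde a(a) \oplus \C\tilde b(b)$ varies continuously and the family of quotients $(\C\tilde a(a)\oplus\C\tilde b(b) \setminus 0)/\RS$ is the constant family $\CP(q_a,q_b)$. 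Changing the local lifts changes the identification by the induced $S^1$-action on the weighted projective line, which is a homeomorphism preserving the orbifold structure and the orientation; hence the transition maps take values in the appropriate structure group, and $p_1$ is a $\CP^1$-orbibundle in the sense of the appendix. The orientation statement is then immediate: each fiber is a weighted $\CP^1$ with its complex structure, and the transition maps act holomorphically (the $S^1$-action is by complex-linear maps on the plane), hence orientation-preservingly.

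The main obstacle I expect is a bookkeeping one rather than a conceptual one: making precise the ``bundle of planes'' over $A \times B$ and checking that the orbifold charts on $E_{A,B}$ glue correctly, i.e. that the local models and transition data genuinely satisfy the cocycle condition in the precise formalism set up in Appendix~\ref{se:orbibundle}. One has to be slightly careful that the weights $q_a$, $q_b$ can vary as $a$, $b$ move within $A$, $B$ (e.g. if $A$ meets strata of different isotropy), so the ``fiber type'' $\CP(q_a, q_b)$ is only locally constant; this is precisely why the statement is phrased at the topological level and in the orbibundle language, where the local model is allowed to depend on the chart. Once the local sections of $\tilde A \to A$, $\tilde B \to B$ are in hand, everything else is a direct verification, and the complex structure on each plane furnishes both the $\CP^1$ topology of the fibers and the claimed natural orientation.
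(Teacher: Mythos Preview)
Your plan has a genuine gap at the trivialization step. The quotient maps $\tilde A\to A$ and $\tilde B\to B$ are \emph{not} locally trivial near orbifold points, so the continuous local lifts $a\mapsto\tilde a(a)$ you rely on do not exist there. For instance, in $\CP(1,2)$ near the singular point $[0:1]$, any putative section would force an odd continuous map $S^1\to S^1$ to be null-homotopic. This is precisely why the definition in Appendix~\ref{se:orbibundle} asks for charts of the form $U_\alpha/\Gamma_\alpha\to V_\alpha$ with a finite group $\Gamma_\alpha$, rather than honest local trivializations; your argument produces the latter and so only works over the smooth locus. Relatedly, your description of the fibre as $\proj{\C\tilde a\oplus\C\tilde b,\rho|}$ is not well-posed: the plane $\C\tilde a\oplus\C\tilde b$ is not $S^1$-invariant unless $\tilde a,\tilde b$ happen to be weight vectors, so there is no restricted action to take. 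The fibre type in the appendix's definition is a fixed space $F$, not something allowed to vary from chart to chart; here $F=\CP^1$ throughout, and the orbifold structure lives in the finite groups acting on the charts, not in the fibres.

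The paper's proof supplies exactly the missing data. Working in the coordinate model $z\sim z'\Leftrightarrow z_j=\lambda^{q_j}z'_j$, it takes the affine charts $V_{k,l}=\{a_k\neq 0,\ b_l\neq 0\}$ on $A\times B$, lifted by $U_{k,l}\subset\C^{d+d'}$ (setting $a_k=b_l=1$) with $\Gamma_{k,l}=\U_{q_k}\times\U_{q'_l}$. The fibre model is the ordinary $\CP^1$, with the explicit $\Gamma_{k,l}$-equivariant map
\[
(a,b,[u:v])\ \longmapsto\ \bigl(\tilde\varphi_{k,l}(a,b),\,[u^{q_0}a_0:\cdots:u^{q_d}a_d:v^{q'_0}b_0:\cdots:v^{q'_{d'}}b_{d'}]\bigr),
\]
and $\Gamma_{k,l}$ acting on $\CP^1$ by $(\zeta,\zeta')\cdot[u:v]=[\zeta^{-1}u:(\zeta')^{-1}v]$. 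This exhibits $p_1^{-1}(V_{k,l})$ as $(U_{k,l}\times\CP^1)/\Gamma_{k,l}$, which is precisely the orbibundle chart required; the complex-linear nature of the $\Gamma_{k,l}$-action on $\CP^1$ then gives the orientation. To repair your argument you would need to replace the nonexistent sections by these orbifold charts and recognize that the ``weighted'' behaviour is absorbed into the exponents $u^{q_j},v^{q'_l}$ and the groups $\Gamma_{k,l}$, leaving an unweighted $\CP^1$ as the fibre.
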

\begin{proof}
    We refer to Appendix~\ref{se:orbibundle} for the statement of
    the triviality condition we must show.
    In order to work with coordinates, one can assume
    $A\subset \CP(\bq)$ and $B\subset\CP(\bq')$,
    $\bq\in\N^{d+1}$, $\bq'\in\N^{d'+1}$.
    To simplify notation, let us rather express $\CP(\bq)$
    as the quotient of $\C^{d+1}\setminus 0$ under
    the equivalence relation
    \begin{equation}\label{eq:equivCP}
        z \sim z' \quad \Leftrightarrow  \quad
        z_j = \lambda^{q_j} z'_j, \quad
        \exists \lambda\in\C^*,\forall j,
    \end{equation}
    and similarly for $\CP(\bq')$ and $\CP(\bq,\bq')$.
    The covering $(V_{k,l})$ of $A\times B$ is
    \begin{equation*}
        V_{k,l} := \{ ([a],[b])\in A\times B\ |\
            a_k \neq 0 \text{ and } b_l \neq 0 \}.
    \end{equation*}
    The associated sets $U_{k,l}\subset \C^{d+d'}$
    are the maximal subsets such that the map
    \begin{equation*}
        \tilde{\varphi}_{k,l} :
        (a_0,\ldots, \hat{a}_k,\ldots,a_d,b_0,\ldots,\hat{b}_l,\ldots,b_{d'})
        \mapsto
        ([a_0,\ldots,a_d],[b_0,\ldots,b_{d'}]),
    \end{equation*}
    with $a_k := 1$ and $b_l := 1$ in the right hand side
    (the symbol $\hat{a}_k$ means that the symbol $a_k$
    is erased from the sequence), are well-defined
    $U_{k,l}\to V_{k,l}$.
    Let us denote $\U_k\subset\C$ the group of the $k$-th roots of unity.
    Then $\Gamma_{k,l} := \U_{q_k}\times\U_{q'_l}$ acts linearly on $U_{k,l}$ by
    \begin{equation*}
        (\zeta,\zeta')\cdot (a,b) :=
        \left(\zeta^{q_0}a_0,\ldots,\widehat{\zeta^{q_k}a_k},\ldots,
        \zeta^{q_d}a_d,
        (\zeta')^{q'_0}b_0,\ldots,\widehat{(\zeta')^{q'_l}b_l},\ldots,
        (\zeta')^{q'_{d'}}b_{d'}\right).
    \end{equation*}
    The maps $\tilde{\varphi}_{k,l}$ induce homeomorphisms
    $\varphi_{k,l}:U_{k,l}/\Gamma_{k,l} \to V_{k,l}$.

    Let us now define the $\Gamma_{k,l}$-invariant map
    $\tilde{\chi}_{k,l} : U_{k,l} \times \CP^1 \to
    p_1^{-1}(V_{k,l})$.
    \begin{equation*}
        (a,b,[u:v]) \mapsto
        \left(\tilde{\varphi}_{k,l}(a,b),
        \left[u^{q_0}a_0,\ldots,u^{q_d}a_d,v^{q'_0}b_0,\ldots,
        v^{q'_{d'}}b_{d'}\right]\right),
    \end{equation*}
    with $a_k := 1$ and $b_l := 1$ in the right hand side.
    These maps are invariant under the following actions of the
    $\Gamma_{k,l}$'s
    \begin{equation*}
        (\zeta,\zeta')\cdot \left((a,b),[u:v]\right) :=
        \left((\zeta,\zeta')\cdot (a,b),[\zeta^{-1}u : (\zeta')^{-1}v]\right)
    \end{equation*}
    and induce homeomorphisms
    $\chi_{k,l} : (U_{k,l}\times\CP^1)/\Gamma_{k,l} \to p_1^{-1}(V_{k,l})$.
\end{proof}

Since $\CP^1\simeq\sphere{2}$, there is a natural Gysin morphism
$p_1^*:H_*(A\times B) \to H_{*+2}(E_{A,B})$ according to
Corollary~\ref{cor:Gysin}.
We can now extend the definition of $\pj_*$ to the weighted case
by setting $\pj_* := (p_2)_*\circ p_1^*$.
Let us now get the all properties stated in \cite[Appendix~A]{OnHoferZehnderGF}
back in the weighted case.

Here, let us express $\CP(\bq)$ as the quotient of $\C^{d+1}\setminus 0$
under the equivalence relation (\ref{eq:equivCP}) in order to simplify
the notation in the following definition.
According to Kawasaki \cite{Kaw73},
let us consider the map $g_\bq : \CP^d \to \CP(\bq)$,
\begin{equation*}
    g_\bq([z_0 :\cdots : z_d]) :=
    \left[z_0^{q_0}, \ldots, z_d^{q_d}\right].
\end{equation*}
Let $\U_k\subset \C$ denote the groups of the $k$-th roots of unity
and $\U_\bq := \U_{q_0}\times\cdots\times\U_{q_d}$ acting coordinate-wise
on $\CP^d$.
The map $g_\bq$ induces a homeomorphism $\CP^d/\U_\bq \simeq \CP(\bq)$
(beware that it is not an isomorphism of orbifolds).
Let us recall the following classical result of singular homology.
\begin{lem}[{\cite[IV.3.4(c)]{Bor60}}]\label{lem:Borel}
    Given a finite $G$-action on a topological space $X$,
    the morphism induced by the quotient map in homology
    (and in cohomology)
    \begin{equation*}
        H_*(X;R)^G \to H_*(X/G;R)
    \end{equation*}
    is an isomorphism when the characteristic of $R$ is either $0$
    or prime with the order of $G$.
\end{lem}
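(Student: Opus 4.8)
The plan is a transfer argument resting on the averaging operator, which is available precisely because $N := |G|$ is invertible in $R$. Write $\pi : X \to X/G$ for the quotient map. On singular chains the map $A := \tfrac1N\sum_{g\in G} g_\#$ is a chain map with $A^2 = A$ and image $C_*(X;R)^G$, so $C_*(X;R) = C_*(X;R)^G \oplus \ker A$ as complexes; the induced idempotent $A_*$ on $H_*(X;R)$ has image exactly the $G$-invariants $H_*(X;R)^G$ (it lands there because $g_\#A = A$, and it fixes every invariant class because $A_*\alpha = \tfrac1N\sum_g g_*\alpha = \alpha$), whence the inclusion of invariant chains induces an isomorphism $H_*(C_*(X;R)^G) \cong H_*(X;R)^G$. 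Since $\pi_\# \circ g_\# = \pi_\#$ for all $g$, the map $\pi_\#$ restricts to $\pi_\#^G : C_*(X;R)^G \to C_*(X/G;R)$, whose effect on homology is the map of the statement; so the lemma amounts to showing $\pi_\#^G$ is a quasi-isomorphism. By the theorem of small simplices (barycentric subdivision) one may replace both sides by their $\mathcal U$-small subcomplexes for any open cover $\mathcal U$ of $X/G$, so it is enough to argue locally.

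In every situation where we use the lemma, $X$ is a smooth manifold with a smooth finite-group action (indeed $X = \CP^d$, $G = \U_\bq$), so by linearisation of the action near each point each point of $X/G$ has arbitrarily small open neighbourhoods $U$ for which $\pi^{-1}(U)$ is $G$-equivariantly contractible onto a single orbit $G/H$, with $|H|$ dividing $N$. For such $U$ both $C_*(\pi^{-1}(U);R)^G$ and $C_*(U;R)$ compute $R$ in degree $0$ and vanish above, and $\pi_\#^G$ is an isomorphism on $H_0$ (both are generated by a point class). A Mayer--Vietoris / double-complex induction over a finite (or locally finite) cover by such $U$ — intersections of which again retract onto disjoint unions of orbits — then upgrades this to the global statement. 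The cohomological version is obtained by the same averaging-plus-locality scheme, dualising the chain arguments.

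Equivalently, one can package this conceptually through the Borel construction: the map $EG\times_G X \to X/G$ has fibre $BH$ over $[x]$ with $H$ the stabiliser, and $H_*(BH;R) = R$ concentrated in degree $0$ since $|H|$ divides the invertible $N$, so (granted the same local retraction, via Vietoris--Begle/Leray) this map is an $R$-homology isomorphism; on the other hand the Cartan--Leray spectral sequence $H_p(G;H_q(X;R)) \Rightarrow H_{p+q}(EG\times_G X;R)$ collapses, because $H_p(G;-)$ vanishes for $p > 0$ when $N$ is invertible in $R$, which yields $H_*(EG\times_G X;R) \cong H_*(X;R)_G \cong H_*(X;R)^G$, and chasing the edge homomorphisms identifies the composite with $\pi_*$. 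Either way, the genuine point — and the main obstacle — is the local triviality of $\pi$ near each point of $X/G$: for a completely arbitrary topological $G$-space this requires an extra hypothesis and is exactly what Borel's cited result supplies, whereas in the smooth manifold setting in which we invoke it the argument is elementary.
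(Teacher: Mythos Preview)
The paper does not supply its own proof of this lemma; it simply cites Borel \cite[IV.3.4(c)]{Bor60}. So there is no argument in the paper to compare against, and your sketch stands on its own.

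Your approach is the standard transfer/averaging argument and is essentially correct. The reduction to showing that $\pi_\#^G : C_*(X;R)^G \to C_*(X/G;R)$ is a quasi-isomorphism is clean, and the splitting of $C_*(X;R)$ by the idempotent $A$ correctly identifies $H_*(C_*(X;R)^G)$ with $H_*(X;R)^G$. The local step via small simplices and Mayer--Vietoris, together with the slice theorem for smooth actions, handles all instances actually used in the paper (where $X$ is a manifold and $G$ acts smoothly). The alternative Borel-construction packaging is also fine and reaches the same conclusion.

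One remark worth recording: Borel's cited result is formulated for Alexander--Spanier/\v{C}ech cohomology rather than singular theory, and for completely general $G$-spaces the singular statement can fail without some local niceness hypothesis (precisely the point you flag at the end). Since every space to which the paper applies the lemma is a manifold or an orbifold chart, the two theories agree there and your restriction to the smooth case loses nothing. So your proof is adequate for the paper's purposes, and your caveat about the general topological case is exactly the right disclaimer.
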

According to this lemma, when the characteristic of
$R$ is either $0$ or prime with the order of $\U_\bq$,
that is when it is prime with any of the weights $q_j$'s,
$g_\bq$ induces the isomorphism
\begin{equation}\label{iso:CPCPq}
    (g_\bq)_* : H_*(\CP^d;R) \xrightarrow{\simeq} H_*(\CP(\bq);R).
\end{equation}
More generally, for every $A\subset\CP(\bq)$, $g_\bq$
induces the isomorphism
\begin{equation*}
    (g_\bq)_* : H_*\left(g_\bq^{-1}(A);R\right)^{\U_\bq}
    \xrightarrow{\simeq} H_*(A;R).
\end{equation*}
In order to get the properties stated in \cite[Appendix~A]{OnHoferZehnderGF}
back, let us show the commutativity of the following diagram:
\begin{equation}\label{dia:equivpj}
    \begin{gathered}
        \xymatrix{
            H_*(\widetilde{A}\times\widetilde{B};R)^{\U_\bq\times\U_{\bq'}}
            \ar[d]^-{(g_\bq\times g_{\bq'})_*}_-{\simeq}
            \ar[r]^-{\pj_*}
            & H_{*+2}(\widetilde{A}*\widetilde{B};R)^{\U_\bq\times\U_{\bq'}}
            \ar[d]^-{(g_{(\bq,\bq')})_*}_-{\simeq}
            \\
            H_*(A\times B;R) \ar[r]^-{\pj_*}
            & H_{*+2}(A*B;R)
        }
    \end{gathered},
\end{equation}
where $\widetilde{A} := g_\bq^{-1}(A)$ and
$\widetilde{B} := g_{\bq'}^{-1}(B)$ and
the actions of the group $\U_\bq\times\U_{\bq'}$ on
$\widetilde{A}\times\widetilde{B}$
and $\widetilde{A}*\widetilde{B}$ are coordinate-wise.
Let us show that the top $\pj_*$ in (\ref{dia:equivpj}) is well-defined,
that is showing that $\pj_* : H_*(\widetilde{A}\times\widetilde{B}) \to
H_{*+2}(\widetilde{A}*\widetilde{B})$ is $\U_\bq\times\U_{\bq'}$-equivariant.
The group $\U_\bq\times\U_{\bq'}$ acts on $E_{\widetilde{A},\widetilde{B}}$
by restriction of its diagonal action on
$(\widetilde{A}\times\widetilde{B})\times (\widetilde{A}*\widetilde{B})$.
Both associated projection maps $p_1$ and $p_2$ are equivariant
under these actions, so the equivariance of $\pj_*$ follows.

By naturality of the properties stated in \cite[Appendix~A]{OnHoferZehnderGF}
and naturality of the homology projective joins, diagram (\ref{dia:equivpj})
implies that these properties are still verified for our extension of
the homology projective join to the unweighted case:
for instance,
the homology projective join is associative
\begin{equation*}
    \pj_*(\pj_*(\alpha\times\beta)\times\gamma) =
    \pj_*(\alpha\times\pj_*(\beta\times\gamma)),\quad
    \forall \alpha,\beta,\gamma,
\end{equation*}
and it satisfies $\pj_*([P]\times [P']) = [P * P']$ for every (disjoint)
weighted projective spaces $P$ and $P'$.

\subsection{Generating functions of $\RS$-equivariant Hamiltonian diffeomorphisms}

In this section, we recall definitions and properties already discussed
in \cite[Section~5]{periodicCPd} and \cite[Section~3.2]{OnHoferZehnderGF}
in the case of ``unweighted'' projective space
and that generalize directly to our ``weighted'' case.
Let us fix once for all the weights $\bq = (q_0,\ldots, q_d)\in(\N^*)^{d+1}$,
the $S^1$-action of $\C^{d+1}$ will always refer to the action induced by
$\rho_\bq$ defined in
(\ref{eq:rhoq}).

Given a Hamiltonian map $(h_s) : S^1\times \CP(\bq)\to \R$, let $(H_s)$ be the
Hamiltonian map of $\C^{d+1}$ that is $2$-homogeneous, $S^1$-invariant
and whose restriction to $\sphere{}(\bq)$ lifts $(h_s)$.
Let $(\Phi_s)$ be the associated Hamiltonian flow.
An $\RS$-equivariant Hamiltonian diffeomorphism will refer to the time-one map
of such a flow.
These are smooth diffeomorphisms of $\C^{d+1}\setminus 0$ that are $S^1$-equivariant
and positively homogeneous and extends to homeomorphisms of $\C^{d+1}$.
When the restriction of such a diffeomorphism $\sigma$ to $\sphere{}(\bq)$ is $C^1$-close
to the identity, there exists a unique map $f:\C^{d+1}\to\R$ such that $f(0)=0$ and
\begin{equation*}
\forall z\in \C^{d+1}, \exists! w \in \C^{d+1}, \quad
w = \frac{z + \sigma(z)}{2} \quad \text{and} \quad 
\nabla f(w) = i(z-\sigma(z)),
\end{equation*}
where $\nabla f$ denotes the gradient of $f$
(the existence of $g:=\nabla f$ is a consequence of the implicit fonction theorem
and $g$ is a gradient because it is the graph of a Lagrangian submanifold).
The map $f$ is called the elementary generating function of $\sigma$.
It is smooth away from $0$ where it is only $C^1$, it is $S^1$-invariant and
positively 2-homogeneous (this is a consequence of the definition of $f$ and the
equivariance of $\sigma$).
In general, an $\RS$-equivariant Hamiltonian diffeomorphism $\Phi$ can be written as
$\Phi = \sigma_n\circ \cdots \circ\sigma_1 $ where every $\RS$-equivariant Hamiltonian
diffeomorphism $\sigma_j$ is sufficiently small so that it admits an elementary generating
function $f_j$.
For all $n\in\N^*$,
we will say that the $n$-tuple $\boldsymbol{\sigma}=(\sigma_1,\dotsc,\sigma_n)$
is associated with the Hamiltonian flow $(\Phi_s)$ if
there exist real numbers $0=t_0\leq t_1\leq \cdots\leq t_n = 1$
such that $\sigma_k = \Phi_{t_k}\circ\Phi_{t_{k-1}}^{-1}$.
For all $k\in\N$, we denote $\bepsilon^k$ the $k$-tuple 
\begin{equation*}
    \bepsilon^k := (\id,\ldots,\id).
\end{equation*}
More generally given a tuple $\bsigma$ or $\bq$ and an integer $n\in\N$,
$\bsigma^n$ or $\bq^n$ denotes the $n$-fold concatenation.
A continuous family of such tuples $(\bsigma_s)$ will denote
a family of tuples of the same size $n\geq 1$,
$\boldsymbol{\sigma}_s =: (\sigma_{1,s},\dotsc,\sigma_{n,s})$ such that
the maps $s\mapsto \sigma_{k,s}$ are $C^1$-continuous.
We denote by $F_{\boldsymbol{\sigma}}$ the following function $(\C^{d+1})^n\to\R$:
\begin{equation*}
    F_{\boldsymbol{\sigma}} (v_1,\dotsc,v_n) := \sum_{k=1}^{n}
        f_k\left(\frac{v_k + v_{k+1}}{2}\right) +
    \frac{1}{2}\la v_k,iv_{k+1}\ra,
\end{equation*}
with convention $v_{n+1} = v_1$, each $f_k:\C^{d+1}\to\R$ being the elementary
generating function associated with $\sigma_k$.
When $n$ is odd, $F_{\boldsymbol{\sigma}}$ is a generating function of $\sigma_n\circ\cdots\circ\sigma_1$.
Therefore, every $\RS$-equivariant Hamiltonian diffeomorphism admits a generating function.
Generating functions are $S^1$-invariant and positively 2-homogenous.
$\RS$-orbits (for the diagonal action) of
critical points of a generating function of $\Phi$ are in bijection with
$\RS$-orbits of fixed points of $\Phi$
through the map $(v_1,\ldots,v_n)\mapsto v_1$.

Given generating functions $F:\C^{d+1}\times\C^k\to\R$
and $G:\C^{d+1}\times\C^l\to\R$ of $\Phi$ and $\Psi$ respectively,
the fiberwise sum of $F$ and $G$ denotes the map
\begin{equation}\label{eq:fiberwise}
    (F+G)(x;\xi,\eta) := F(x;\xi) + G(x;\eta).
\end{equation}
Although this is not a generating function of $\Phi\circ\Psi$,
the critical points of $F+G$ are also in bijection with the fixed
points of $\Phi\circ\Psi$ \emph{via} $(x;\xi,\eta)\mapsto
x-i\partial_xG(x;\eta)/2$.

When small equivariant diffeomorphisms $\sigma_j$'s are linear, the associated
$f_j$'s are quadratic forms and so is the resulting $F_\bsigma$.
When $\sigma_n\circ\cdots\circ\sigma_1$ also admits an elementary (quadratic) generating function,
we have the following unicity lemma whose proof follows the one of
\cite[Prop.~35]{ThePHD}.
\begin{lem}
    \label{lem:unicityeqgf}
    Let $Q:\C^{d+1}\times\C^k \to \R$ be a quadratic generating function
    generating the same linear Hamiltonian diffeomorphism as the elementary
    generating function $q:\C^{d+1} \to \R$.
    Then, there exists a linear fibered isomorphism
    $A$ of $\C^{d+1}\times\C^k$ which is isotopic to the identity through
    linear fiberwise isomorphism such that
    $Q\circ A = q\oplus R$ for some quadratic form $R:\C^k\to\R$.
    More precisely, if $Q(z)=\la \widetilde{Q}z,z\ra$ with
    \begin{equation*}
        \widetilde{Q}=\begin{bmatrix} a & b\\ {}^t b & c \end{bmatrix},
    \end{equation*}
    then $c$ is invertible and
    $A(x;\xi) := (x;\xi - c^{-1}{}^t bx)$ so that
    $Q\circ A(x;\xi) = q(x) +  {}^t\xi c \xi$.
\end{lem}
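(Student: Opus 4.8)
The plan is to set up a linear algebra computation directly on the symmetric matrix $\widetilde{Q}$ defining the quadratic form $Q$, and to extract the invertibility of the lower-right block $c$ from the fact that $Q$ is a \emph{generating function} (not merely a quadratic form). First I would recall what it means for $Q(x;\xi) = \la \widetilde{Q}z,z\ra$ with $z=(x;\xi)$ to be a generating function of a linear Hamiltonian diffeomorphism $\Phi$: setting the fiber derivative $\partial_\xi Q = 0$ determines a linear subspace, and on that subspace the map $x\mapsto x - i\,\partial_x Q$ (or the appropriate convention used in the paper, matching the elementary generating function relation $\nabla f(w)=i(z-\sigma(z))$, $w=(z+\sigma(z))/2$) recovers the graph of $\Phi$. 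The condition $\partial_\xi Q(x;\xi) = 2({}^tb\, x + c\,\xi) = 0$ must, for the graph to project isomorphically onto the $x$-variables (equivalently, for $Q$ to generate an honest diffeomorphism rather than a correspondence), be solvable uniquely in $\xi$ for every $x$; this is exactly the statement that $c$ is invertible. I would write this out carefully, since it is the one place where the hypothesis ``$Q$ generates the \emph{same} Hamiltonian diffeomorphism as an elementary generating function'' is used — the elementary one $q$ generates $\sigma$ with \emph{no} fiber variables, so the reduced dynamics must match, forcing the critical-fiber equation to be non-degenerate in $\xi$.

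Once $c$ is known to be invertible, the rest is the explicit change of variables announced in the statement. I would set $A(x;\xi) := (x;\, \xi - c^{-1}\,{}^tb\,x)$, observe that $A$ is a linear fibered isomorphism (it is block-lower-triangular with identity diagonal blocks, hence invertible, and the straight-line path $A_s(x;\xi):=(x;\xi - s\,c^{-1}\,{}^tb\,x)$, $s\in[0,1]$, is a path of linear fiberwise isomorphisms from $\id$ to $A$), and then simply compute $Q\circ A$. With $w = \xi - c^{-1}\,{}^tb\,x$ one has $\xi = w + c^{-1}\,{}^tb\,x$, and substituting into $Q(x;\xi) = {}^txax + 2\,{}^txb\xi + {}^t\xi c\xi$ the cross terms cancel: the coefficient of the mixed $x$–$w$ term is $2({}^txb\,w - {}^tx\,b\,c^{-1}c\,w) = 0$ after using $c^{-1}c = \id$ (and symmetry of $c$), while the purely quadratic-in-$x$ terms collapse to $q(x) = {}^tx(a - b\,c^{-1}\,{}^tb)x$ — and here I would invoke the unicity statement for elementary generating functions, i.e.\ that a linear Hamiltonian diffeomorphism has at most one elementary generating function, to identify $a - b\,c^{-1}\,{}^tb$ with the matrix of $q$. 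What remains is $Q\circ A(x;\xi) = q(x) + {}^tw\,c\,w = q(x) + {}^t\xi\,c\,\xi$ (after renaming $w$ back to $\xi$), so $R(\xi) := {}^t\xi c\xi$ is the desired quadratic form on $\C^k$. I would mirror the computation in \cite[Prop.~35]{ThePHD} as stated in the paper, so the level of detail can stay light.

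The main obstacle — really the only subtle point — is justifying that $c$ is invertible from the generating-function hypothesis rather than assuming it. One must be careful about conventions: the paper works with the symplectic form $\Omega = \sum \ud x_j\wedge\ud y_j$ on $\C^{d+1}$ and the generating-function formalism is the ``broken geodesic''/Chaperon–Sikorav one via the function $F_{\bsigma}$, so I would trace through exactly how $\Phi$ is read off from $Q$ in that normalization (the fiber-critical condition plus the $x\mapsto x - i\partial_x Q$ recovery map), and then argue that if $c$ had a nonzero kernel there would be a fiber direction along which $Q$ is constant at a fiber-critical point, producing either a non-isolated family of ``fixed points'' or a failure of the graph to be a graph over the base — contradicting that the \emph{same} $\Phi$ is generated by the fiber-free $q$, whose fiber-critical set is a single point per base point. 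A clean alternative is to appeal to the stable-equivalence theory of generating functions: $Q$ and $q$ generate the same $\Phi$, hence are stably fiber-equivalent, and comparing fiber dimensions forces the ``extra'' part of $Q$ to be a nondegenerate quadratic form, which on the block level is precisely the invertibility of $c$ after the triangular normalization. Everything else is a routine block-matrix manipulation.
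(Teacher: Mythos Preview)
Your approach is correct and is exactly the one the paper defers to (Th\'eret's Prop.~35): extract the invertibility of $c$ from the generating-function condition, then complete the square via the block-triangular $A$ and identify the base part with $q$ by uniqueness of elementary generating functions. One small slip to fix when you write it up: to compute $Q\circ A(x;\xi)$ you should substitute $\eta=\xi-c^{-1}\,{}^tb x$ directly into $Q(x;\eta)$, not express $\xi$ in terms of $w$ and substitute back into $Q(x;\xi)$ --- as written, your substitution computes $Q\circ A^{-1}$ rather than $Q\circ A$, and the cross terms would not cancel; the correct direction gives the cancellation $2\,{}^txb\xi-2\,{}^t\xi\,{}^tb x=0$ and the claimed $Q\circ A(x;\xi)={}^tx(a-bc^{-1}\,{}^tb)x+{}^t\xi c\xi$.
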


\section{Generating functions homology}
\label{se:GFhomology}

In this section, we define the generating functions homology of
a Hamiltonian diffeomorphism $\varphi$ in $\CP(\bq)$
and give its main properties.
The constructions and proofs are very close to the ones of
the ``unweighted'' case $\CP^d$, so we will mainly refer
to \cite{OnHoferZehnderGF} and emphasize on the key changes.

\subsection{Action and generating functions}
\label{se:action}
Let $(\Phi_s)$ be the $\RS$-equivariant Hamiltonian flow
lifting a Hamiltonian flow $(\varphi_s)$ of $\CP(\bq)$
generated by the Hamiltonian map $(h_s)$.
Let $x\in\CP(\bq)$ be a fixed point of $\varphi:=\varphi_1$
and $u:\D^2 \to \CP(\bq)$ be an orbifold map such that
$u|_{\partial\D^2}$ corresponds to $s\mapsto \varphi_s(u)$
(a capping of $x$).
The action of the capped fixed point $(x,u)$ is the real number
\begin{equation*}\label{eq:action}
    a(x,u) := -\frac{1}{\pi}\left(
        \int_{\D^2} u^*\omega + \int_0^1 h_s\circ\varphi_s(x)\ud s
    \right).
\end{equation*}
In \cite[Proposition~5.8]{The98} gives a characterisation of
the action values of the cappings of $x$ in term of the lifted
dynamics that directly extends to our weighted case.
Let $z\in\sphere{}(\bq)$ be a lift of $x$, it is not
necessarily a fixed point of $\Phi:=\Phi_1$ but there
exist real numbers $t\in\R$ such that
$\rho_\bq (-t)\Phi(z) = z$.
Following Théret, we see that such $t$ corresponds exactly to
the action values of the cappings of $x$.
With this characterisation, it is clear that the sets of action values
of $x$ is invariant under $\Z$-translations.
Here is the major difference between the weighted and unweighted case:
the set of action value of $x$ equals $t_0+\frac{1}{\order(x)}\Z$
so the number of action values inside $[0,1)$ depends
on $\order(x)$.
This is ultimately the reason why our version of the Fortune-Weinstein theorem,
which follows Givental and Théret's steps, give multiplicity to fixed points.

In order to study the fixed points of $\varphi$,
we define continuous families of generating functions $F_t$
associated with $\rho_\bq(-t)\Phi$ for compact intervals $I$ of $t$'s.
A fixed point of action $t\in I$ corresponds to an $\RS$-orbit
of critical points of such an $F_t$.
Given a positively $2$-homogeneous map
$F$ that is invariant under
$\rho_{\bq'}$ (in our case $\bq'$ will often be 
a concatenation $\bq^n$),
we define its projectivization $\widehat{F}:\CP(\bq')\to\R$
by factoring the restriction of $F$ to $\sphere{}(\bq')$
under $\sphere{}(\bq')\to\CP(\bq')$.
Fixed points with action $t\in I$ now correspond to
critical points of $\widehat{F}_t$ with value $0$.

Let us now recall the precise construction of the families
of generating functions $F_t = F_{\bsigma_{m,t}}$.
For $m\in\N$, we define continuous tuples $t\mapsto \bsigma_{m,t}$
associated with $\rho_\bq(-t)\Phi$ for $t\in [-m,m]$ in the following
way.
Let $(\delta_t)$ be the family of small $\RS$-equivariant diffeomorphisms
$\delta_t(z):=\rho_\bq(-t)z$, $|t|<1/(2\max q_j)$.
The associated elementary generating function is
\begin{equation*}
w\mapsto -\sum_j \tan(q_j \pi t)|w_j|^2
\end{equation*}
(this function is an elementary generating function of $\delta_t$
as soon as it is well defined for the fixed $t$,
we will use it for a fixed $t$ larger than $1/(2\max q_j)$
in the proof of Proposition~\ref{prop:paction}).
Let us fix once for all an even number $n_0\geq 4\max q_j$ and
let $(\bdelta_t^{(1)})$ be the family of
$n_0$-tuples $(\delta_{t/n_0},\ldots,\delta_{t/n_0})$
generating $z\mapsto \rho_\bq(-t)z$ for $t\in (-2,2)$.
For all $m\in\N^*$, let $(\bdelta_t^{(m)})$ be a family
of $mn_0$-tuples generating $z\mapsto \rho_\bq(-t)z$ for
$t\in(-m-1,m+1)$ and satisfying 
\begin{equation}\label{eq:bdeltam}
    \bdelta_t^{(m+1)} = \left(\bdelta_t^{(m)},\bepsilon^{n_0}\right),\quad
    \forall t\in [-m,m].
\end{equation}
More precisely, let $\chi:\R\to\R$ be an odd smooth non-decreasing map
such that $\chi_m\equiv\id$ on $[-m-1/4,m+1/4]$ and $\chi_m\equiv m+1/2$ on
$[m+3/4,+\infty)$. We set
\begin{equation*}
    \bdelta_t^{(m+1)} = \left(\bdelta_{\chi_m(t)}^{(m)}
    ,\bdelta^{(1)}_{t-\chi_m(t)}\right),\quad
    \forall t\in (-m-2,m+2).
\end{equation*}

Finally, we can set
\begin{equation*}
    \bsigma_{m,t} := \left(\bsigma,\bdelta_t^{(m)}\right),\quad
    \forall t\in [-m,m].
\end{equation*}
Since $\tan$ is increasing on $(-\pi/2,\pi/2)$,
we deduce that $\partial_t F_{\bsigma_{m,t}} \leq 0$
by a straightforward computation (here it is crucial that each weight $q_j$
is positive).

\subsection{Homology of sublevel sets and local homology
of a fixed point}
\label{se:homologysublevel}

Here and throughout this paper, $H_*(X)$ and $H^*(X)$ denote respectively
the singular homology and the singular cohomology of a topological
space or pair $X$ over an indeterminate ring $R$ whose characteristic
is $0$ and prime with any of the weights $q_j$ (that have been fixed
once for all).
If one needs to specify the ring $R$, one writes $H_*(X;R)$
and $H^*(X;R)$ instead.
The following notation naturally extends the one used in the unweighted case.
Let $\bsigma$ be an $n$-tuple of small $\RS$-equivariant Hamiltonian
diffeomorphisms.
We denote by $Z(\bsigma)\subset \CP(\bq^n)$ the sublevel set
\begin{equation*}
    Z(\bsigma) := \left\{ \widehat{F}_{\bsigma} \leq 0 \right\}.
\end{equation*}
We denote by $HZ_*(\bsigma)$ the shifted homology group
\begin{equation*}
    HZ_*(\bsigma) := H_{*+(n-1)(d+1)}(Z(\bsigma)),
\end{equation*}
and if $Z(\bsigma')\subset Z(\bsigma)$,
with $\bsigma'$ an $n$-tuple, we set
\begin{equation*}
    HZ_*(\bsigma,\bsigma') := H_{*+(n-1)(d+1)}(Z(\bsigma),Z(\bsigma')).
\end{equation*}

For $m\in\N^*$ and $a\leq b$ in $[-m,m]$,
one has $F_{\bsigma_{m,b}} \leq F_{\bsigma_{m,a}}$ so
$Z(\bsigma_{m,a})\subset Z(\bsigma_{m,b})$ and
we can set
\begin{equation*}
    G_*^{(a,b)}(\bsigma,m) :=
    HZ_*(\bsigma_{m,b},\bsigma_{m,a}),
\end{equation*}
when $a$ and $b$ are not action values of $\bsigma$.
We define in the same way the cohomology analogues of these notations,
\emph{e.g}
\begin{equation*}
    G^*_{(a,b)}(\bsigma,m) :=
    HZ^*(\bsigma_{m,b},\bsigma_{m,a}) =
    H^{*+(n-1)(d+1)}(Z(\bsigma_{m,b}),Z(\bsigma_{m,a})).
\end{equation*}

This homology group can be naturally identified to
the homology of sublevel sets
of a map (see \cite[Section~5.4]{periodicCPd}):
\begin{equation*}
    G_*^{(a,b)}(\bsigma,m) \simeq H_{*+(n-1)(d+1)}
    \left(\left\{\action \leq b\right\},\left\{\action\leq a\right\}\right),
\end{equation*}
for some $C^1$-map $\action:M\to\R$ that is smooth in the neighborhood
of its critical points.
The function $\action$ is some kind of finite-dimensional action:
critical points of $\action$ are in one-to-one correspondence with
capped fixed points of $\varphi$ with action value inside $[-m,m]$.
In the unweighted case at least,
this correspondence sends critical value to action value and
Morse index up to a $(n-1)(d+1)$ shift in degree to
the Conley-Zehnder index.
More generally, the local homology of $\action$ (up to the same shift
in degree) is isomorphic to the local Floer homology of the corresponding
capped orbit (in the unweighted case at least).
Let us denote by $\lochom_*(f;x)$ the local homology
of the critical point $x$ of a map $f$:
\begin{equation*}
\lochom_*(f;x) := H_*(\{ f \leq f(x) \},\{ f \leq f(x) \}\setminus x).
\end{equation*}
We can define up to isomorphism
\begin{equation*}
    \lochom_*(\bsigma;z,t) \simeq \lochom_*\left(\widehat{F}_{\bsigma_{m,t}};\zeta\right)
    \simeq \lochom_{*+(n-1)(d+1)}\left(\action;(\zeta,t)\right),
\end{equation*}
where $\zeta\in\CP(\bq^n)$ is the critical point of
$\widehat{F}_{\bsigma_{m,t}}$ associated with the fixed point $z\in\CP(\bq)$
of action $t\in[-m,m]$ (see \cite[Section~5.5 and 5.7]{periodicCPd}
for details).
The independence on $m$ of this definition can also easily be deduced from
the isomorphism induced by $\theta_m^{m+1}$ (defined later in Section~\ref{se:direct})
on the local homologies, similarly to the unweighted case.
Local homologies $\lochom_*(\bsigma;z,t)$ and $\lochom_*(\bsigma;z,t+1)$ are isomorphic
up to a $2|\bq|$ shift in degree by the local version of the periodicity isomorphism
defined at (\ref{iso:periodic}), similarly to the unweighted case.
However, when $\order(z)\neq 1$, it is not clear whether local homology groups
associated with action values that does not differ by an integer are isomorphic (up
to a shift in degree).
For these reasons, when the grading is irrelevant, we will only specify
the action value up to an integer.

We can now define precisely $N(\bsigma;\F)$ for a choice of tuple $\bsigma$
and of field $\F$ (whose characteristic is either $0$ or prime with any of the weights)
by
\begin{equation}\label{eq:N}
    N(\bsigma;\F) := \sum_{z\in\mathrm{Fix}(\varphi)}
    \sum_{j=1}^{\order(z)}
    \dim \lochom_*(\bsigma;z,t_j(z);\F) \in \N,
\end{equation}
where $(t_j(z))$ is the increasing sequence of action values that
$z$ takes inside $[0,1)$.
The integers $O(z;\F)$ defined in the introduction for fixed points
$z\in\fix(\varphi)$ is then
\begin{equation*}
    O(z;\F) := 
    \sum_{j=1}^{\order(z)}
    \dim \lochom_*(\bsigma;z,t_j(z);\F).
\end{equation*}

We recall that an integer $k\in\N^*$ is said to be admissible for $\varphi$
at a fixed point $z$ if $\lambda^k\neq 1$ for all eigenvalues $\lambda\neq 1$
of $\ud\varphi(z)$.
Until the end of the section, $\varphi$ is associated with a tuple $\bsigma$
and the periodic points of $\varphi$ are isolated in order to simplify
the statements. The proofs do not differ from the unweighted case
(\cite[Proposition~3.1 and Corollary~3.2]{OnHoferZehnderGF}).

\begin{prop}\label{prop:gromollmeyer}
    Let $k\in\N^*$ be an admissible iteration of $\varphi$ at
    the fixed point $z$. Then as graded modules over a ring whose characteristic
    is prime with any of the weights,
    \begin{equation*}
        \lochom_*(\bsigma^k;z) \simeq \lochom_{*-i_k}(\bsigma;z),
    \end{equation*}
    for some shift in degree $i_k\in\Z$.
\end{prop}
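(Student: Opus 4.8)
The plan is to reduce the statement to a local, finite-dimensional computation about the iterated generating function and then invoke the classical Gromoll--Meyer-type iteration formula for the local (co)homology of a critical point. First I would fix the tuple $\bsigma$ associated with $\varphi$ and recall that, by the local isomorphism established just before the proposition, $\lochom_*(\bsigma^k;z)$ is computed as $\lochom_*(\widehat{F}_{\bsigma^k_{m,t}};\zeta^{(k)})$ where $\zeta^{(k)}\in\CP(\bq^{kn})$ is the critical point corresponding to the fixed point $z$ of $\varphi^k=(\varphi)^k$ at the appropriate action value, and similarly $\lochom_*(\bsigma;z)$ is computed from $\widehat{F}_{\bsigma_{m,t}}$. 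Since $z$ is a fixed point of $\varphi$, it is a fixed point of $\varphi^k$ as well, and the isotropy order $\order(z)$ is unchanged, so the orbifold charts around $\zeta$ and $\zeta^{(k)}$ are quotients by the same finite group $\U_{\order(z)}$ (or a group whose order is prime with the characteristic of the coefficient ring); by Lemma~\ref{lem:Borel} the local homology of the projectivized function is the $\U_{\order(z)}$-invariant part of the local homology upstairs, so it suffices to prove the iteration isomorphism equivariantly for the lifted function $F$ near $z$.

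Next I would pass to a local model for $F_{\bsigma^k}$ near $\zeta^{(k)}$. The elementary generating function data for $\bsigma^k$ is the $k$-fold concatenation of the data for $\bsigma$ (together with the $\bdelta^{(m)}_t$ factors), so $F_{\bsigma^k}$ is, up to the fiberwise sum operation \eqref{eq:fiberwise} and the unicity Lemma~\ref{lem:unicityeqgf}, a sum of $k$ copies of $F_{\bsigma}$ coupled cyclically through the quadratic linking terms $\frac12\la v_k,iv_{k+1}\ra$. Near the critical point one splits $F_\bsigma$, via the equivariant Morse--Bott / generating-function splitting lemma, into a nondegenerate quadratic part and the ``essential'' part carrying the local homology; the quadratic parts of the $k$ copies assemble into a nondegenerate quadratic form whose signature shifts the degree by an integer $i_k$, while the essential parts are acted on by the cyclic symmetry. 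The admissibility hypothesis, namely $\lambda^k\neq 1$ for all eigenvalues $\lambda\neq 1$ of $\ud\varphi(z)$, is exactly what guarantees that the tangent map $\ud\varphi^k(z)$ has the same fixed subspace as $\ud\varphi(z)$, hence the critical point $\zeta^{(k)}$ of $\widehat{F}_{\bsigma^k}$ has the same ``kernel direction'' as $\zeta$, and the iterated essential part is simply identified (after the quadratic reduction) with a single copy of the essential part of $\widehat{F}_\bsigma$. This yields $\lochom_*(\bsigma^k;z)\simeq\lochom_{*-i_k}(\bsigma;z)$.

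The main obstacle I expect is bookkeeping the grading shift $i_k$ and, more seriously, carrying the reduction through the orbifold quotient: one must check that the linear fibered isomorphism furnished by Lemma~\ref{lem:unicityeqgf} can be chosen $\U_{\order(z)}$-equivariantly (or at least that it descends to the orbifold charts), so that the identification of local homologies survives taking $\U_{\order(z)}$-invariants. In the unweighted case $\order(z)=1$ and this issue is invisible, which is precisely why the excerpt says ``the proofs do not differ from the unweighted case'' — the point is that the finite group here acts linearly on all the $\C^{d+1}$-factors simultaneously and commutes with the cyclic iteration symmetry, so the Morse-theoretic splitting and the unicity lemma can all be performed equivariantly, and then Lemma~\ref{lem:Borel} (valid since $\mathrm{char}\,R$ is prime with $\order(z)$) lets one pass freely between the orbifold and its finite cover. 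Once equivariance is arranged, the argument is the standard one: combine the equivariant generating-function splitting lemma with the identification of the iterated essential part, exactly as in \cite[Proposition~3.1]{OnHoferZehnderGF}.
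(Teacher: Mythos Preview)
Your proposal is correct and follows precisely the approach the paper refers to: the paper gives no independent argument here but defers to \cite[Proposition~3.1]{OnHoferZehnderGF}, which is the standard Gromoll--Meyer iteration argument for generating functions (splitting lemma plus admissibility forcing equal nullity), exactly as you outline. Your added paragraph on the isotropy group $\U_{\order(z)}$ and the appeal to Lemma~\ref{lem:Borel} is the one place where the weighted case needs a word beyond the unweighted proof, and you have identified it correctly; since the $S^1$-equivariance of the $\sigma_j$'s makes the critical lift $\zeta^{(k)}$ have the same isotropy as $\zeta$, and since all the linear reductions (Lemma~\ref{lem:unicityeqgf}, the splitting lemma) can be taken equivariant for this diagonal cyclic action, taking invariants commutes with the whole argument.
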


\begin{cor}\label{cor:gromollmeyer}
    For every fixed point $z$ of $\varphi$, there exists $B>0$
    such that, for all prime $p$
    \begin{equation*}
        \dim\lochom_*(\bsigma^p;z,t_j(z);\F_p) < B, \quad
        \forall j\in \{ 1,\ldots, \order(z) \}.
    \end{equation*}
\end{cor}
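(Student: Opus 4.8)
The plan is to deduce the corollary from Proposition~\ref{prop:gromollmeyer} by combining two observations: first, that only finitely many primes fail to be admissible for $\varphi$ at a fixed point $z$; and second, that for admissible $p$ the local homology $\lochom_*(\bsigma^p;z)$ is isomorphic as a graded module (over any field of the allowed characteristic, in particular $\F_p$) to a shift of the fixed module $\lochom_*(\bsigma;z)$, hence has dimension independent of $p$.

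First I would fix the fixed point $z$ and let $\lambda_1,\ldots,\lambda_r$ be the eigenvalues of $\ud\varphi(z)$ different from $1$. An integer $k$ is admissible precisely when $\lambda_i^k\neq 1$ for all $i$; the non-admissible integers are exactly the multiples of the orders of those $\lambda_i$ that are roots of unity, so there is a finite set $S$ of ``bad'' primes, namely those primes dividing one of these finitely many orders. For every prime $p\notin S$, Proposition~\ref{prop:gromollmeyer} applies and gives $\lochom_*(\bsigma^p;z)\simeq\lochom_{*-i_p}(\bsigma;z)$ as graded modules over $\F_p$, so in particular
\begin{equation*}
    \dim\lochom_*(\bsigma^p;z;\F_p) = \dim\lochom_*(\bsigma;z;\F_p).
\end{equation*}
The right-hand side is a dimension of a local homology module of a single fixed $C^1$-map near a single critical point, and such local homology is finitely generated; over a field it is finite-dimensional. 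One must be slightly careful that the dimension of $\lochom_*(\bsigma;z;\F_p)$ could a priori vary with the characteristic $p$ (torsion phenomena), but since $\lochom_*(\bsigma;z;\Z)$ is finitely generated, the dimension over $\F_p$ is bounded above by the total rank plus the number of torsion generators, uniformly in $p$ (and for all but finitely many $p$ it equals the free rank). Call this uniform bound $B_0$.

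Then I would account for the splitting of action values. For each $j\in\{1,\ldots,\order(z)\}$, the module $\lochom_*(\bsigma^p;z,t_j(z);\F_p)$ is a direct summand of $\lochom_*(\bsigma^p;z;\F_p)$ — it is the local homology at one of the (finitely many) critical points of $\widehat{F}_{\bsigma^p_{m,t}}$ lying over $z$ — so its dimension is at most $\dim\lochom_*(\bsigma^p;z;\F_p)\leq B_0$. Finally, enlarging $B_0$ if necessary to also dominate the finitely many values obtained from the bad primes $p\in S$ (each giving a finite-dimensional module), we obtain a single constant $B>B_0$ with
\begin{equation*}
    \dim\lochom_*(\bsigma^p;z,t_j(z);\F_p)<B,\quad\forall\,\text{prime }p,\ \forall j\in\{1,\ldots,\order(z)\}.
\end{equation*}
The main obstacle I anticipate is the uniformity of the bound over all characteristics $p$: Proposition~\ref{prop:gromollmeyer} is stated over a ring whose characteristic is prime with the weights, and one needs that this class of rings includes $\F_p$ for all but finitely many $p$, together with the finite-generation of the integral local homology to control the dimension jump at the residual bad primes. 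As in the unweighted case (\cite[Corollary~3.2]{OnHoferZehnderGF}), this is handled by the finiteness of local homology for the $C^1$-map $\action$ near an isolated critical point, so no new difficulty arises beyond bookkeeping.
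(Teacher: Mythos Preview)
Your argument is correct and follows the same route the paper takes (it simply cites the unweighted case \cite[Corollary~3.2]{OnHoferZehnderGF}): finitely many primes fail to be admissible, Proposition~\ref{prop:gromollmeyer} handles the admissible ones, universal coefficients/finite generation of the integral local homology gives uniformity over the fields $\F_p$, and the finitely many exceptional primes are absorbed into the constant.

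One small notational point: your ``direct summand'' step is unnecessary and rests on a slight misreading. In the paper, $\lochom_*(\bsigma;z)$ in Proposition~\ref{prop:gromollmeyer} refers to the local homology at a \emph{single} action value (the shift $i_k$ would not make sense otherwise), not to a sum over the $\order(z)$ action values; and for a fixed $t$ there is a single critical point of $\widehat{F}_{\bsigma^p_{m,t}}$ over $z$, not several. So Proposition~\ref{prop:gromollmeyer} already gives, for each $j$, $\dim\lochom_*(\bsigma^p;z,t_j;\F_p)=\dim\lochom_*(\bsigma;z,t'_j;\F_p)$ for the corresponding action value $t'_j$, and you can bound these $\order(z)$ numbers directly. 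This does not affect the validity of your conclusion.
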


\subsection{Composition morphisms and the direct system of
$G_*^{(a,b)}(\bsigma)$}
\label{se:direct}

In \cite[Section~4.1]{OnHoferZehnderGF}, we observed that the linear embedding,
\begin{equation*}
    \widetilde{B}_{n,m}(\mathbf{w},\mathbf{w}') :=
    \left(\mathbf{w},\sum_{k=1}^m (-1)^{k+1} w'_k ,\mathbf{w}'\right),\quad
    \forall \mathbf{w}\in(\C^{d+1})^n, \forall \mathbf{w}'\in(\C^{d+1})^m,
\end{equation*}
expressed in coordinates $w_k := \frac{v_k + v_{k+1}}{2}$,
satisfies for all $n$-tuples $\bsigma$ and $m$-tuples $\bsigma'$
\begin{equation*}
    F_{(\bsigma,\bepsilon,\bsigma')}
    \left(\widetilde{B}_{n,m}(\mathbf{w},\mathbf{w}')\right)
    = F_\bsigma (\mathbf{w}) + 
    F_{\bsigma'} (\mathbf{w}').
\end{equation*}
Therefore, the induced map $B_{n,m}:\CP(\bq)*\CP(\bq')
\hookrightarrow\CP(\bq,1,\bq')$
defines a natural morphism
$H_*(Z(\bsigma)*Z(\bsigma'))\to H_*(Z(\bsigma,\bepsilon,\bsigma'))$
and the composition with the homology projective join
$\alpha\otimes\beta\mapsto \pj_*(\alpha\times\beta)$
define the composition morphism
\begin{equation*}
    HZ_*(\bsigma)\otimes HZ_*(\bsigma') \to
    HZ_{*-2d}((\bsigma,\bepsilon,\bsigma'))
\end{equation*}
denoted by $\alpha\otimes\beta\mapsto \alpha\compi\beta$.
For the same formal reasons as the unweighted case,
it admits relative versions and it is associative
\cite[Section~4.1]{OnHoferZehnderGF}.

As for the unweighted case,
for a fixed $m\in\N$, the long exact sequence of triple induces
inclusion and boundary morphisms fitting into a long exact sequence:
\begin{equation*}
    \cdots \xrightarrow{\partial_{*+1}}
    G_*^{(a,b)}(\bsigma,m) \to G_*^{(a,c)}(\bsigma,m) \to G_*^{(b,c)}(\bsigma,m)
    \xrightarrow{\partial_*} G_{*-1}^{(a,b)}(\bsigma,m) \to \cdots
\end{equation*}
with $-m\leq a\leq b\leq c\leq m$ and $a,b,c$ not action values of $\bsigma$.
Using the composition morphism $\compi$, one can define
canonical isomorphisms
\begin{equation}\label{iso:Gm}
    \theta_{m}^{m+1}:G_*^{(a,b)}(\bsigma,m) \to G_*^{(a,b)}(\bsigma,m+1),
\end{equation}
for $-m\leq a\leq b\leq m$, that commutes with the above mentioned inclusion 
and boundary morphisms.
One can then define $G^{(a,b)}_*(\bsigma)$ as the direct limit of
the direct system induced by $(\theta_m^{m+1})_m$:
\begin{equation*}
    G_*^{(a,b)}(\bsigma) := \varinjlim G_*^{(a,b)}(\bsigma,m).
\end{equation*}
We then have inclusion maps
\begin{equation*}
    \cdots \xrightarrow{\partial_{*+1}} 
    G_*^{(a,b)}(\bsigma) \to G_*^{(a,c)}(\bsigma) \to G_*^{(b,c)}(\bsigma)
    \xrightarrow{\partial_*} G_{*-1}^{(a,b)}(\bsigma) \to \cdots
\end{equation*}
for all $a\leq b\leq c$ that are not action values; one can thus set
\begin{equation*}
    G_*^{(-\infty,b)}(\bsigma) := \varprojlim G_*^{(a,b)}(\bsigma),
    \quad a\to -\infty,
\end{equation*}
and one can then define $G_*^{(-\infty,+\infty)}(\bsigma)$ by taking a direct
limit in
a similar way.
The definition of (\ref{iso:Gm})
is the natural extension of the unweighted case,
let us make it explicit.
For an odd $n$,
the space $Z(\bepsilon^n)$ retracts on the projectivization of the
maximal non-positive linear subspace
of $F_{\bepsilon^n}$ which has the same homology
as a $\CP^{N-1}$ with $N=(d+1)(n+1)/2$.
Therefore,
\begin{equation*}
    HZ_*(\bepsilon^n) =
    \bigoplus_{k=-(d+1)(n-1)/2}^d R a_k^{(n)}
    \simeq H_{*+(n-1)(d+1)}\left(\CP^{(d+1)(n+1)/2-1}\right),
\end{equation*}
where $a_k^{(n)}$ is the generator of degree $2k$ identified with
the class $[\CP^l]$ of appropriate degree $2l=2k+(n-1)(d+1)$ under the isomorphism
induced by the inclusion of a maximal complex projective subspace
of $Z(\bepsilon^n)$ and (\ref{iso:CPCPq}).
We now define (\ref{iso:Gm}) by
\begin{equation*}
    \theta_m^{m+1}(\alpha) := \alpha\compi a_d^{(n_0-1)}
    \in G_*^{(a,b)}(\bsigma,m+1),
    \quad
    \forall \alpha\in G_*^{(a,b)}(\bsigma,m).
\end{equation*}
With the same proof as in the unweighted case,
one shows that $\theta_m^{m+1}$ is an isomorphism.

In \cite[Section~4.4]{OnHoferZehnderGF}, we defined a second composition
morphism $\compii$
whose goal
(only partially reached)
was to imitate the composition morphism of the
Hamiltonian Floer homology.
Let us fix 2 tuples $\bsigma$, $\bsigma'$ of odd respective sizes $n$ and $n'$,
$a,b,c\in\R$ that are not action values of $\bsigma$ and $\bsigma'$ respectively.
For sufficiently large $m,m'\in\N$, this composition morphism
$\alpha\otimes\beta\mapsto \alpha\compii\beta$,
\begin{equation*}
    HZ_*(\bsigma'_{m',c})\otimes G_*^{(a,b)}(\bsigma,m) \to
    G_{*-2d}^{(a+c,b+c)}((\bsigma',\bepsilon,\bsigma),m+m'),
\end{equation*}
naturally generalizes with the same construction.
By the same formal arguments as in the unweighted case,
it is associative and it commutes with the morphisms $\theta_m^{m+1}$
ultimately defining
\begin{equation}\label{eq:compii}
    HZ_*(\bsigma'_{m',c})\otimes G_*^{(a,b)}(\bsigma) \to
    G_{*-2d}^{(a+c,b+c)}((\bsigma',\bepsilon,\bsigma)).
\end{equation}

\subsection{Properties of the generating functions homology}
\label{se:propertiesG}

Let us first focus on the special case $\bsigma = \bepsilon$,
\emph{i.e.} $\varphi_s\equiv\id$.
Let us denote by $T_{m,t}$
the family of generating functions associated with $(\bepsilon_{m,t})_t$.
Since the elementary generating function of $\delta_s$ is a quadratic form,
so is the map $T_{m,t}$.
Since $T_{m,0}$ is a generating function of the identity, its kernel as a quadratic form
has dimension $2(d+1)$ and $\ind T_{m,0} = mn_0(d+1)$
\cite[Proposition~4.1]{OnHoferZehnderGF}.
The variation of index is governed by the Maslov index of
\begin{equation*}
t\mapsto \rho_\bq(-t) = \bigoplus_{j=0}^d e^{-2i\pi q_j t}
\end{equation*}
so that
\begin{equation*}
    \ind T_{m,t} - \ind T_{m,0} = 2\sum_{j=0}^d \lfloor q_j t\rfloor,
\end{equation*}
(See \cite[Section~3 and Lemma~5.5]{periodicCPd}).
Similarly to the unweighted case, we deduce that
the persistence module $(H_*(Z(\bepsilon_{m,t})))$ is isomorphic
to the persistence module $(H_*(\CP^{N(t)}))$, $-m< t < m$,
induced by the family of non-decreasing
projective subspaces of complex dimension
$N(t) :=  m(d+1)n_0/2 + \sum_j \lfloor q_j t \rfloor$.
We recall that the coefficient ring $R$ has a characteristic $0$ or prime
with the $q_j$'s (see (\ref{iso:CPCPq}), otherwise the family $(\CP^{N(t)})$ of
non-decreasing
unweighted projective subspaces
must be replace by a family of non-decreasing \emph{weighted} projective
subspaces).
Thus, as a graded $R$-module,
\begin{equation*}
    HZ_*(\bepsilon_{m,t}) = \bigoplus_{k=-(d+1)mn_0/2}^{d+\sum_j \lfloor q_j t\rfloor}
    Ra_k^{(mn_0+1)}(t),
\end{equation*}
where $a_k^{(mn_0+1)}(t)$ is the generator of degree $2k$ identified with
the class $[\CP^l]$ of appropriate degree $2l=2k+(d+1)mn_0$ under the previous
persistence modules isomorphism.
The inclusion morphism $HZ_*(\bepsilon_{m,t}) \to HZ_*(\bepsilon_{m,s})$
maps each $a_k^{(mn_0+1)}(t)$ to $a_k^{(mn_0+1)}(s)$
(for $-m\leq t\leq s\leq m$).
Hence,
\begin{equation*}
    G_*^{(a,b)}(\bepsilon,m) =
    \bigoplus_{k=d+\sum_j \lfloor q_j a\rfloor}^{d+\sum_j \lfloor q_j b\rfloor}
    R\alpha_k^{(m)}(a,b),
\end{equation*}
for $-m < a\leq b< m$,
where $\alpha_k^{(m)}(a,b)$ is the image of $a_k^{(mn_0+1)}(b)$
under the inclusion morphism $HZ_*(\bepsilon_{m,b})
\to G_*^{(a,b)}(\bepsilon,m)$.
Similarly to the unweighted case,
one has $\theta_m^{m+1}\alpha_k^{(m)}(a,b) = \alpha_k^{(m+1)}(a,b)$.
We set $\alpha_k(a,b) := \theta_m^\infty \alpha_k^{(m)}(a,b)$.
For $a<b<c$, if $\alpha_k(b,c)$ is well-defined,
then $\alpha_k(a,c)$ is also well-defined and sent to
the former;
there exists a well-defined $\alpha_k(-\infty,c)
\in G_{2k}^{(-\infty,c)}(\bepsilon)$ sent to $\alpha_k(a,c)$
for all $a\leq c$.
Let $\alpha_k$ be the image of $\alpha_k(-\infty,c)$ under
$G_{2k}^{(-\infty,c)}(\bepsilon) \to G_{2k}^{(-\infty,+\infty)}(\bepsilon)$, then
\begin{equation*}
    G_*^{(-\infty,+\infty)}(\bepsilon) = \bigoplus_{k\in\Z} R\alpha_k.
\end{equation*}

The ``periodicity'' isomorphism naturally extends to the weighted case as
\begin{equation}\label{iso:periodic}
    G_*^{(a,b)}(\bsigma) \xrightarrow{\simeq}
    G_{*+2|\bq|}^{(a+1,b+1)}(\bsigma),
\end{equation}
(we recall that $|\bq| := \sum_j q_j$).
Similarly to the unweighted case, it is defined using composition morphisms $\compii$ (\ref{eq:compii}),
with slight changes in the degree of the generators $a_k$ involved, let us
precise them.
Let us set
$a_d := a_d^{(mn_0+1)}(0)\in HZ_{2d}(\bepsilon_{m,0})$
and $a_{d+|\bq|} := a_{d+|\bq|}^{(mn_0+1)}(1)\in HZ_{2(d+|\bq|)}(\bepsilon_{m,1})$.
The morphism $G_*^{(a+1,b+1)}(\bsigma) \to G_*^{(a+1,b+1)}((\bepsilon^2,
\bsigma))$, $\alpha\mapsto a_d\compii \alpha$, is an isomorphism,
let us write $\alpha\mapsto a_d^{-1}\compii \alpha$ its inverse morphism.
We define the morphism (\ref{iso:periodic}) by
$\alpha\mapsto a_d^{-1}\compii a_{d+|\bq|} \compii \alpha$.
The proof of \cite[Proposition~4.10]{OnHoferZehnderGF}
applies with these formal adaptations
(in the proof, the generator $a_{-1}$ of degree $-2$ must also be replaced by
the generator $a_{d-|\bq|}$ of degree $2(d-|\bq|)$).

\begin{prop}\label{prop:periodicity}
    The morphism (\ref{iso:periodic}) is an isomorphism
    commuting with inclusion and boundary morphisms.
\end{prop}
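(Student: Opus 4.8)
The plan is to reduce the statement to the corresponding fact in the unweighted case, namely \cite[Proposition~4.10]{OnHoferZehnderGF}, by exploiting the purely formal nature of the construction of the periodicity map. First I would recall that the morphism (\ref{iso:periodic}) is built as the composition $\alpha \mapsto a_d^{-1}\compii a_{d+|\bq|}\compii\alpha$, so that it suffices to prove: (i) each of the three maps $\alpha\mapsto a_d\compii\alpha$, $\alpha\mapsto a_{d+|\bq|}\compii\alpha$, and the identification of the target, commutes with the inclusion and boundary morphisms of the long exact sequences; and (ii) $\alpha\mapsto a_d\compii\alpha$ is an isomorphism, so that the inverse $a_d^{-1}\compii(-)$ is well defined and also commutes with inclusion and boundary morphisms. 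Point (i) is immediate from the relative versions of the composition morphisms $\compii$ recalled in Section~\ref{se:direct}: by construction $\compii$ is induced on homology by maps of pairs of sublevel sets (a projective join followed by the linear embedding $B_{n,m}$), and any map of pairs commutes with the connecting homomorphisms and inclusions of the associated long exact sequences; this is exactly the formal argument used in the unweighted case, and nothing about the weights enters here.

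The content is then point (ii), that $a_d\compii(-)$ is an isomorphism. Here I would follow the proof of \cite[Proposition~4.10]{OnHoferZehnderGF} verbatim, checking only the places where the weights could intervene. The key computation is that for $\bsigma=\bepsilon$ one has the explicit description $G_*^{(a,b)}(\bepsilon,m)=\bigoplus_k R\alpha_k^{(m)}(a,b)$ from Section~\ref{se:propertiesG}, together with the behaviour of the generators $a_d^{(mn_0+1)}(0)$ and $a_{d+|\bq|}^{(mn_0+1)}(1)$ under $\compii$. The only genuine change relative to the unweighted case is bookkeeping in degrees: the generator playing the role of $a_{-1}$ in the unweighted proof is now $a_{d-|\bq|}$ of degree $2(d-|\bq|)$, and the shift in (\ref{iso:periodic}) is $2|\bq|$ rather than $2$, as already flagged in the excerpt. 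One then checks that $a_d\compii(-)$ maps a basis of $G_*^{(a+1,b+1)}(\bsigma)$ bijectively to a basis of $G_*^{(a+1,b+1)}((\bepsilon^2,\bsigma))$ — this uses associativity of $\compii$, the identity $\pj_*([P]\times[P'])=[P*P']$ from Section~\ref{se:preliminaries}, and the fact that composing with the top generator $a_d$ of $HZ_*(\bepsilon_{m,0})$ realizes the isomorphism $\theta$-type map; since these are all the same formal ingredients as in the unweighted argument, the proof carries over.

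The step I expect to be the main obstacle is verifying that the replacement of the unweighted generators by their weighted analogues (degree $2(d-|\bq|)$ in place of $-2$, shift $2|\bq|$ in place of $2$) is consistent throughout the unweighted proof — in particular that the various composition identities of the form $a_d\compii a_{d-|\bq|}\compii(-)=(-)$ and $a_{d+|\bq|}\compii(-)$ applied to the distinguished generators still hold with the new degrees. This is not conceptually difficult, but it requires carefully re-reading \cite[Proposition~4.10]{OnHoferZehnderGF} line by line and substituting the corrected degrees; there is no new geometric input, only the risk of an off-by-$|\bq|$ error in the grading. Once that bookkeeping is confirmed, invertibility of $a_d\compii(-)$ follows exactly as before, and combined with point (i) this gives that (\ref{iso:periodic}) is an isomorphism commuting with inclusion and boundary morphisms, as claimed.
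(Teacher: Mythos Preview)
Your proposal is correct and follows essentially the same approach as the paper: the paper itself simply states that the proof of \cite[Proposition~4.10]{OnHoferZehnderGF} applies with the formal degree adaptations already flagged (replacing the generator $a_{-1}$ of degree $-2$ by $a_{d-|\bq|}$ of degree $2(d-|\bq|)$), which is precisely the bookkeeping you outline. Your decomposition into the commutativity part (i) and the invertibility part (ii), together with the observation that only the grading changes and no new geometric input is needed, matches the paper's treatment exactly.
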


Following the proof in the unweighted case, one can now define
spectral invariants $c_k(\bsigma)$ in our generalized setting.

\begin{thm}\label{thm:spectral}
    Let $\bsigma$ be a tuple of small $\RS$-equivariant Hamiltonian
    diffeomorphisms
    associated with the Hamiltonian diffeomorphism $\varphi$ of $\CP(\bq)$.
    As a graded module over a coefficient ring $R$ whose characteristic is either $0$
    or prime with the weights,
    \begin{equation*}
        G_*^{(-\infty,+\infty)}(\bsigma) = \bigoplus_{k\in\Z} R\alpha_k
    \end{equation*}
    for some non-zero $\alpha_k$'s with $\deg\alpha_k = 2k$.
    For all $k\in\Z$, let
    \begin{equation*}
        c_k(\bsigma) := \inf \left\{ t\in \R\ |\ \alpha_k \in
            \im \left( G_*^{(-\infty,t)}(\bsigma) \to
        G_*^{(-\infty,+\infty)}(\bsigma) \right)\right\}.
    \end{equation*}
    Then for all $k\in\Z$, $c_k(\bsigma)\in\R$ is an action value
    of $\bsigma$ and
    $c_{k+|\bq|}(\bsigma)=c_k(\bsigma)+1$. Moreover \[c_k(\bsigma) \leq c_{k+1}(\bsigma)\] for all $k \in \Z,$ and if there exists $k\in\Z$ such that $c_k(\bsigma)=c_{k+1}(\bsigma)$,
    then $\varphi$ has infinitely many fixed points of action $c_k(\bsigma)$.
    If $d+1$ consecutive $c_k(\bsigma)$'s are equal then $\varphi=\id$.
\end{thm}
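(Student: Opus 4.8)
The plan is to establish the properties of the spectral invariants $c_k(\bsigma)$ by transporting the corresponding statements from the unweighted case via the structural results already set up in the excerpt, then handling the new ``weighted'' features separately. First I would verify that $G_*^{(-\infty,+\infty)}(\bsigma) = \bigoplus_k R\alpha_k$ with $\deg\alpha_k = 2k$: for $\bsigma = \bepsilon$ this is exactly the computation carried out above, and for general $\bsigma$ one uses that the persistence module $(G_*^{(-\infty,t)}(\bsigma))_t$ is, by construction, connected to the identity module through a continuous family, so the total homology $G_*^{(-\infty,+\infty)}(\bsigma)$ is isomorphic as a graded module to $G_*^{(-\infty,+\infty)}(\bepsilon)$ (this is the standard invariance-of-total-homology argument; the $\alpha_k$ are the images of the identity generators under a continuation isomorphism). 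The $\alpha_k$ are nonzero because the module is free of rank one in each even degree $2k$. Then $c_k(\bsigma)$ is well-defined and finite: finiteness on the right follows because $\alpha_k$ lifts to some $G_*^{(-\infty,t)}$ for $t$ large (the module stabilizes), and on the left the action spectrum is discrete, so the infimum is attained and is an action value of $\bsigma$ — here one uses that between two consecutive action values the inclusion maps $G_*^{(-\infty,a)} \to G_*^{(-\infty,b)}$ are isomorphisms, so $\alpha_k$ cannot appear at a non-spectral value without appearing at the spectral value just below.

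Next I would prove $c_{k+|\bq|}(\bsigma) = c_k(\bsigma) + 1$. This is where the weight $|\bq|$ enters and is the step most different from the unweighted case. The tool is the periodicity isomorphism (\ref{iso:periodic}) of Proposition~\ref{prop:periodicity}: it identifies $G_*^{(-\infty,t)}(\bsigma)$ with $G_{*+2|\bq|}^{(-\infty,t+1)}(\bsigma)$ compatibly with inclusion morphisms, so it sends the filtration level at which $\alpha_k$ appears to the level at which its image appears, shifted by $1$ in action and by $2|\bq|$ in degree. One must check that this periodicity isomorphism carries $\alpha_k$ to $\alpha_{k+|\bq|}$ (up to a nonzero scalar), which follows from tracking it on the identity generators $\alpha_k(-\infty,c)$ — by the explicit formula $\alpha \mapsto a_d^{-1}\compii a_{d+|\bq|}\compii\alpha$ and the degree bookkeeping already recorded above, the generator of degree $2k$ goes to the generator of degree $2(k+|\bq|)$ — and then invoking naturality under the continuation isomorphism from $\bepsilon$ to $\bsigma$. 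Hence $c_{k+|\bq|}(\bsigma) = c_k(\bsigma)+1$.

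For the monotonicity $c_k(\bsigma) \leq c_{k+1}(\bsigma)$ and the rigidity statements, I would argue with the module structure over the cohomology ring, exactly as in the unweighted case. The classes $\alpha_k$ satisfy $\alpha_{k+1} = \alpha_k \compii u$ (or an analogous relation) for a degree-$2$ class $u$ coming from the generator $a_{d+1}^{(\cdot)}$, and the composition product $\compii$ is filtration non-increasing in the sense that it maps $G^{(-\infty,t)} \otimes HZ$ into $G^{(-\infty,t)}$; this gives $c_{k+1}(\bsigma) \geq c_k(\bsigma)$ (the product with a class of action $0$ cannot lower the spectral value), and in fact one gets $c_{k+1} \geq c_k$ directly from the fact that $\alpha_{k+1}$ being visible at level $t$ forces $\alpha_k$ to be visible at level $t$, wait — more carefully, it is the reverse inclusion that holds: $\alpha_k$ is obtained from $\alpha_{k+1}$ by capping, so $\alpha_{k+1}$ visible at $t$ implies $\alpha_k$ visible at $t$, giving $c_k \leq c_{k+1}$. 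If $c_k(\bsigma) = c_{k+1}(\bsigma) =: t$, then in the local picture of $\action$ near action value $t$, the sublevel set must carry homology in two consecutive ``diagonal'' degrees that cannot both come from a single isolated nondegenerate-type critical orbit; an argument identical to \cite[proof of Theorem~4.something]{OnHoferZehnderGF} (a Lusternik--Schnirelmann / cup-length count on the critical set at level $t$) then shows the critical set at action value $t$ is infinite, i.e.\ $\varphi$ has infinitely many fixed points of action $t$. Finally, if $d+1$ consecutive $c_k$'s coincide then, using $c_{k+|\bq|} = c_k + 1$ and the LS inequality, the whole sublevel filtration collapses and $G_*^{(-\infty,t)}(\bsigma)$ has the homology of all of $\CP(\bq)$ concentrated at a single value of $t$, which forces $\widehat{F}_{\bsigma_{m,t}}$ to be constant on a set carrying the full homology and hence $\varphi = \id$; this last deduction is the same as in \cite{OnHoferZehnderGF}.

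The main obstacle I expect is the second paragraph: pinning down that the periodicity isomorphism (\ref{iso:periodic}) sends the spectral generator $\alpha_k$ to $\alpha_{k+|\bq|}$ and not merely to \emph{some} generator of degree $2(k+|\bq|)$. Since each even degree contains a one-dimensional space this is automatic up to scalar once we know the target is the right degree, so the real content is the degree computation — which is precisely why the excerpt was careful to record that the periodicity map involves $a_d$ and $a_{d+|\bq|}$ rather than $a_d$ and $a_{d+1}$ — combined with the compatibility of everything with the continuation isomorphism from $\bepsilon$. The LS-type argument for the rigidity clauses is standard but requires the local homology machinery of Section~\ref{se:homologysublevel} together with the observation that a finite union of isolated critical orbits on $\CP(\bq)$ cannot support homology in $d+1$ consecutive degrees unless it is the full space.
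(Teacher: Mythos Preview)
Your proposal is essentially correct and matches the paper's approach: the paper gives no explicit proof of Theorem~\ref{thm:spectral} at all, merely stating ``Following the proof in the unweighted case, one can now define spectral invariants $c_k(\bsigma)$ in our generalized setting,'' and your sketch is a faithful unpacking of what that deferral means, with the correct identification of where the weighted modification enters (namely Proposition~\ref{prop:periodicity} replacing the shift $(+2(d+1),+1)$ by $(+2|\bq|,+1)$). Your self-correction on the monotonicity step is right --- one obtains $\alpha_k$ from $\alpha_{k+1}$ by capping with the hyperplane class, which is filtration-preserving, hence $c_k\leq c_{k+1}$; the small notational slip ``$a_{d+1}^{(\cdot)}$'' is harmless since you flag it as ``or an analogous relation.''
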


As a corollary, we get the generalization of the Fortune-Weinstein theorem stated in the
introduction of the article.

\begin{proof}[Proof of Theorem~\ref{thm:arnold}]
Let $\bsigma$ be a tuple associated with the Hamiltonian diffeomorphism
$\varphi\in\ham(\CP(\bq))$ that has finitely many fixed points.
The spectral values $c_k(\bsigma)$ must all be distinct.
Since $(c_k(\bsigma))$ is increasing and satisfies $c_{k+|\bq|}(\bsigma) = c_k(\bsigma) +1$,
there are exactly $|\bq|$ distinct spectral values inside $[0,1)$.
But the number of action values inside $[0,1)$ associated with a fixed point $z$
equals $\order(z)$, so the conclusion follows.
\end{proof}

The Poincaré duality in singular homology implies the following duality,
according to the proof of \cite[Proposition~4.13]{OnHoferZehnderGF}.

\begin{prop}\label{prop:poincare}
Let $\bsigma$ be a tuple of small $\RS$-equivariant Hamiltonian diffeomorphisms of
$\C^{d+1}$. There exists a duality isomorphism between generating functions homology
and cohomology
\begin{equation*}
PD : G^*_{(a,b)}(\bsigma) \xrightarrow{\sim} G_{2d-*}^{(-b,-a)}(\bsigma^{-1}),
\end{equation*}
with $-\infty\leq a\leq b\leq +\infty$ and $a,b$ not action values.
This isomorphism is natural: it commutes with inclusion and boundary maps.
\end{prop}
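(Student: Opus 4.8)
The plan is to mimic, essentially verbatim, the proof of \cite[Proposition~4.13]{OnHoferZehnderGF}, checking that each ingredient survives the passage to the weighted setting. The starting point is the observation that a generating function $F_\bsigma$ on $(\C^{d+1})^n$ and a generating function $F_{\bsigma^{-1}}$ on $(\C^{d+1})^{n}$ for the inverse diffeomorphism are related, up to a linear fibered change of coordinates and a stabilization, by $F_{\bsigma^{-1}} = -F_\bsigma\circ L$ for a suitable equivariant linear isomorphism $L$ (this is the standard duality for generating functions of a symplectomorphism and its inverse, see \cite{ThePHD}, and it is $\RS$-equivariant here because the whole construction of Section~\ref{se:action} is equivariant). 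After projectivizing, this exchanges sublevel sets of $\widehat{F}_{\bsigma_{m,b}}$ with superlevel sets $\{\widehat F_{(\bsigma^{-1})_{m,-b}} \ge 0\}$, i.e.\ with complements of the sublevel sets $Z\big((\bsigma^{-1})_{m,-b}\big)$ inside $\CP(\bq^n)$ (or inside a maximal negative subspace after the stabilization is accounted for).

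First I would set up, for a fixed large $m$ and for real numbers $a<b$ in $(-m,m)$ that are not action values of $\bsigma$, the chain of identifications
\[
    G^*_{(a,b)}(\bsigma,m)
    = H^{*+(n-1)(d+1)}\big(Z(\bsigma_{m,b}),Z(\bsigma_{m,a})\big),
\]
apply Lefschetz/Alexander duality in the closed orbifold $\CP(\bq^n)$ — here one uses the homological form valid over a ring whose characteristic is $0$ or prime with the weights, so that $\CP(\bq^n)$ behaves like $\CP^{N}$ in (co)homology via (\ref{iso:CPCPq}) and is a Poincaré duality space of formal dimension $2N$ — to convert the cohomology of the pair $(Z(\bsigma_{m,b}),Z(\bsigma_{m,a}))$ into the homology of the pair of complements, and then identify the complements with the sublevel sets $Z\big((\bsigma^{-1})_{m,-a}\big)\subset Z\big((\bsigma^{-1})_{m,-b}\big)$ using the generating-function duality $F_{\bsigma^{-1}}=-F_\bsigma\circ L$ from the previous paragraph. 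Bookkeeping the degree shifts — the $(n-1)(d+1)$ shift in the definition of $HZ$, the $2N$ from Poincaré duality with $2N=(d+1)(n+1)$, and the dimension count that turns the complex-$N$ target into the expected $2d$ — yields the claimed relation $G^*_{(a,b)}(\bsigma)\xrightarrow{\sim}G_{2d-*}^{(-b,-a)}(\bsigma^{-1})$ after passing to the direct limit over $m$ via the $\theta_m^{m+1}$'s (one must check the duality isomorphism intertwines the $\theta$'s, which follows from naturality of Poincaré/Alexander duality and of the composition morphism $\compi$, since $\theta_m^{m+1}$ is $\compi$ with a fixed class). Finally, the case of $a$ or $b$ equal to $\pm\infty$ is obtained by taking the appropriate inverse/direct limit on both sides, as in the unweighted case.

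The main obstacle I expect is not conceptual but a matter of carefully tracking the orbifold Poincaré/Alexander duality. Strictly speaking $\CP(\bq^n)$ is a symplectic orbifold, not a manifold, so one cannot invoke classical manifold duality directly; instead one works over the coefficient ring $R$ (characteristic $0$ or prime with the $q_j$'s) and uses Lemma~\ref{lem:Borel} together with (\ref{iso:CPCPq}) to transport the statement to $\CP^{N}$, where honest Poincaré–Lefschetz–Alexander duality applies, and then transports it back; the delicate point is that the complements of the sublevel sets must be replaced by neighborhood-deformation-retract versions (the sets $Z(\bsigma_{m,t})$ are sublevel sets of a $C^1$ function smooth near its critical points, so they are ANRs and the usual Alexander duality for compact pairs in a sphere/closed manifold applies, but this requires invoking the finite-dimensional reduction $\{\action\le t\}$ from Section~\ref{se:homologysublevel} to guarantee the right local structure). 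Once these technical identifications are in place, the naturality statement — commutation with the inclusion and boundary morphisms of the long exact sequences — is automatic from the naturality of each duality isomorphism used, exactly as in \cite[Proposition~4.13]{OnHoferZehnderGF}.
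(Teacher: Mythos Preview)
Your proposal is correct and follows essentially the same approach as the paper, which simply defers to the proof of \cite[Proposition~4.13]{OnHoferZehnderGF}; you are in fact more explicit than the paper about the one new ingredient, namely that Poincar\'e--Lefschetz duality on the orbifold $\CP(\bq^n)$ must be justified over the restricted coefficient ring via Lemma~\ref{lem:Borel} and (\ref{iso:CPCPq}). One small slip in your bookkeeping: the real dimension of $\CP(\bq^n)$ is $2(n(d+1)-1)$, not ``$2N=(d+1)(n+1)$'', but the final shift $*\mapsto 2d-*$ does come out correctly once the two $(n-1)(d+1)$ shifts built into $HZ^*$ and $HZ_*$ are both taken into account.
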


\begin{cor}\label{cor:spectralinverse}
    Let $\bsigma$ be a tuple of small $\RS$-equivariant
    Hamiltonian diffeomorphisms, then
    \begin{equation*}
        c_k (\bsigma^{-1}) = - c_{d-k} (\bsigma).
    \end{equation*}
\end{cor}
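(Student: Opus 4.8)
The plan is to combine the Poincaré-type duality of Proposition~\ref{prop:poincare} with the variational characterisation of the spectral invariants $c_k$ coming from Theorem~\ref{thm:spectral}. First I would recall that $c_k(\bsigma)$ detects the smallest action value $t$ at which the distinguished class $\alpha_k \in G_*^{(-\infty,+\infty)}(\bsigma)$ enters the image of $G_*^{(-\infty,t)}(\bsigma)$; dually, one should phrase $c_k$ as a \emph{co}homological minimax using the fact that $\alpha_k$ being hit from below is equivalent, by the long exact sequences, to a corresponding cohomology class not yet being killed. Concretely, using the long exact sequence relating $G_*^{(-\infty,t)}$, $G_*^{(-\infty,+\infty)}$ and $G_*^{(t,+\infty)}$ (and its cohomology analogue), $c_k(\bsigma)$ equals the supremum of the $t$ for which the dual class to $\alpha_k$ survives in $G^*_{(t,+\infty)}(\bsigma)$.

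Next I would feed this through the duality isomorphism $PD : G^*_{(a,b)}(\bsigma) \xrightarrow{\sim} G_{2d-*}^{(-b,-a)}(\bsigma^{-1})$ of Proposition~\ref{prop:poincare}. Since $PD$ sends the interval $(t,+\infty)$ to $(-\infty,-t)$ and shifts degree $2k \mapsto 2d-2k = 2(d-k)$, it should identify the cohomology class dual to $\alpha_k$ for $\bsigma$ (living in degree $2k$) with the homology class $\alpha_{d-k}$ for $\bsigma^{-1}$ (living in degree $2(d-k)$), up to a nonzero scalar. The naturality of $PD$ with respect to inclusion morphisms then forces the threshold at which one class appears to match exactly the threshold at which the other disappears: a class $\alpha_{d-k}$ enters $G_*^{(-\infty,s)}(\bsigma^{-1})$ for $s \geq -c_k(\bsigma)$ and not before. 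Reading off definitions gives $c_{d-k}(\bsigma^{-1}) = -c_k(\bsigma)$, which is the claimed identity after renaming $k \leftrightarrow d-k$.

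The main obstacle will be the bookkeeping in the first step: pinning down precisely which cohomology class is dual to $\alpha_k$ and checking that $PD$ carries the canonical homology generators $\alpha_k(\bsigma)$ to (nonzero multiples of) the canonical generators of $\bsigma^{-1}$ rather than to some other, a priori unrelated, classes. This is exactly the content that \cite[Proposition~4.13]{OnHoferZehnderGF} establishes in the unweighted case, and since Proposition~\ref{prop:poincare} is proved by the same argument, I expect the identification of generators to transfer verbatim; the only weighted-case subtlety is that the degree shift is $2d$ (the complex dimension of $\CP(\bq)$) rather than anything involving $|\bq|$, which is consistent with $\deg\alpha_k = 2k$ and $c_{k+|\bq|} = c_k + 1$. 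Once the generators are matched, the rest is a formal diagram chase through the long exact sequences, identical to the unweighted argument, so I would simply write ``the proof is identical to that of \cite[Corollary~4.14]{OnHoferZehnderGF}'' after recording the degree-shift conventions.
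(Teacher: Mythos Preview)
Your proposal is correct and follows exactly the approach the paper intends: the corollary is stated immediately after Proposition~\ref{prop:poincare} without proof, the understanding being that it follows from the Poincar\'e-type duality precisely as in the unweighted case \cite[Corollary~4.14]{OnHoferZehnderGF}. Your outline of the diagram chase (rephrasing $c_k$ cohomologically, applying $PD$ to swap $(t,+\infty)$ with $(-\infty,-t)$ and degree $2k$ with $2(d-k)$, then matching generators) is the standard argument, and your closing remark that one may simply cite the unweighted proof is exactly what the paper does.
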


Following the proof in the unweighted case, the properties of the composition morphisms
imply the sub-additivity of the spectral invariants.

\begin{prop} Given any tuples $\bsigma$ and $\bsigma'$ of small
    $\RS$-equivariant Hamiltonian diffeomorphisms, one has
    \begin{equation*}
        c_{k+l-d}((\bsigma,\bepsilon,\bsigma')) \leq
        c_k(\bsigma) + c_l(\bsigma').
    \end{equation*}
\end{prop}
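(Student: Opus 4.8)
The plan is to deduce the inequality from the two key structural features of the composition morphism $\compii$ established in Section~\ref{se:direct}: it commutes with the inclusion morphisms between the filtered groups $G^{(a,b)}_*$, and on the trivial tuple $\bepsilon$ it sends the top generator $a_d^{(mn_0+1)}(0)$ to the class realising $c_d(\bepsilon)=0$. First I would recall from the construction of (\ref{eq:compii}) that for a class $\beta\in HZ_*(\bsigma'_{m',c})$ of action exactly $c'$, composing with $\beta$ shifts the action window: the image of $G^{(a,b)}_*(\bsigma)$ under $\beta\compii(\cdot)$ lands in $G^{(a+c',b+c')}_{*-2d}((\bsigma',\bepsilon,\bsigma))$. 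Applying this with $\beta$ a representative of the generator $\alpha_l(\bsigma')$ appearing in Theorem~\ref{thm:spectral}, whose minimal action is by definition $c_l(\bsigma')$, one obtains for any $t>c_l(\bsigma')$ a map carrying the class $\alpha_k(\bsigma)\in\im(G^{(-\infty,s)}_*(\bsigma)\to G^{(-\infty,+\infty)}_*(\bsigma))$, valid for any $s>c_k(\bsigma)$, to a class in $\im(G^{(-\infty,s+t)}_{*-2d}((\bsigma',\bepsilon,\bsigma))\to\cdots)$.

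The next step is to identify that image with (a representative of) the generator $\alpha_{k+l-d}((\bsigma',\bepsilon,\bsigma))$, up to the degree bookkeeping. This is the only genuinely algebraic point: one checks, exactly as in the unweighted case \cite[Section~4.4]{OnHoferZehnderGF}, that the composition $\alpha_l(\bsigma')\compii\alpha_k(\bsigma)$ is a non-zero multiple of the corresponding top-type generator of $G^{(-\infty,+\infty)}_*((\bsigma',\bepsilon,\bsigma))$. Degree counting gives $\deg=2k+2l-2d$, so it must be proportional to $\alpha_{k+l-d}((\bsigma',\bepsilon,\bsigma))$; non-vanishing follows from the module structure over $HZ_*(\bepsilon)$ and the fact that $\compii$ restricted to the $\bepsilon$-factors reproduces the ring structure of $H^*(\CP^N)$, where the relevant cup products are non-zero. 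Granting this, the infimum defining $c_{k+l-d}((\bsigma',\bepsilon,\bsigma))$ is bounded above by any $s+t$ with $s>c_k(\bsigma)$ and $t>c_l(\bsigma')$, and letting $s\to c_k(\bsigma)$, $t\to c_l(\bsigma')$ yields the claim.

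The main obstacle I anticipate is purely the identification in the second step: verifying that the composed class is the expected generator and in particular is non-zero, rather than the inequality itself, which is then a one-line infimum argument. In the weighted setting this requires knowing that $HZ_*(\bepsilon^n)$ still carries the cohomology ring of a complex projective space with the generators behaving multiplicatively under $\compi$ and $\compii$ — which is exactly what the discussion following (\ref{iso:Gm}) and Proposition~\ref{prop:periodicity} provide, via the isomorphism (\ref{iso:CPCPq}) valid for our coefficient ring. Once that is in place, the proof is formally identical to the unweighted one; a short remark to that effect, together with the two-line infimum computation, should suffice.
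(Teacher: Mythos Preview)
Your proposal is correct and follows the same approach as the paper, which simply refers to the unweighted case in \cite{OnHoferZehnderGF}: use the action-shifting property of $\compii$ together with the identification of $\alpha_l\compii\alpha_k$ with the appropriate generator, then take the infimum. One small point of care: the morphism (\ref{eq:compii}) has first factor in $HZ_*(\bsigma'_{m',c})$ rather than in $G_*^{(-\infty,t)}(\bsigma')$, so you should phrase the argument in terms of a lift of $\alpha_l$ to a suitable $HZ_*$-class (as is done in the unweighted reference) rather than directly in $G_*$; this is cosmetic but worth getting right when you write it up.
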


We can now associate to every $\bsigma$ a persistence module
$(G_*^{(-\infty,t)}(\bsigma))_t$ that satisfies
the ``periodicity'' property
$G_*^{(-\infty,t+1)}(\bsigma) \simeq G_{*+2|\bq|}^{(-\infty,t)}(\bsigma)$,
the isomorphism being an isomorphism of persistence module
according to the naturality of (\ref{iso:periodic}).
Let us refer to \cite[Section~3.1]{OnHoferZehnderGF} and references therein
for a quick review about persistence modules and barcodes
and let us just recall that persistence modules are $\R$-families
of vector spaces $(V^t)_{t\in\R}$ with natural maps
$V^t\to V^s$ for $t\leq s$ and that, under some finiteness
and continuity conditions (that are satisfied in our case),
one can associate a multiset $\bB(V^t)$ of intervals $[a,b)$
called the barcode of $(V^t)$
satisfying for all $t_0\in\R$,
\begin{equation*}
    \card \{ I \in \bB(V^t)\ |\ t_0\in I\} =
    \dim V^{t_0}.
\end{equation*}
While discussing barcodes properties of $(G_*^{(-\infty,t)}(\bsigma))_t$,
we assume that the persistence module is over a field
(which characteristic is either $0$ or prime with the weights)
and that the number of fixed points in $\CP(\bq)$ associated
with $\bsigma$ is finite.
Since this periodicity property shifts the degree
by a constant positive integer $2|\bq|$, it
induces a permutation of the bars of the barcode 
sending a bar $[a,b)$ on a bar $[a+1,b+1)$ that
generates a free $\Z$-action on the bars.
A family of representatives of the bars is given
by the union of the barcodes of
$(G_k^{(-\infty,t)}(\bsigma))_t$ for $0\leq k\leq 2|\bq| - 1$.
The infinite bars of the barcode are exactly the multiset of
intervals $I_k:=[c_k(\bsigma),+\infty)$, $k\in \Z$,
the positive generator of the $\Z$-action sending $I_k$ to $I_{k+|\bq|}$,
so that there are exactly $|\bq|$ $\Z$-orbits of infinite bars.
Still following the proof in the unweighted case, we get
the following interpretation of the homology count of fixed points $N(\bsigma;\F)$
in terms of count of bars.

\begin{prop}\label{prop:N}
    Given a tuple $\bsigma$ of $\RS$-equivariant Hamiltonian
    diffeomorphisms with a finite number of fixed $\RS$-orbits,
    for every field $\F$ whose characteristic is either $0$ or prime with the weights,
    \begin{equation*}
        N(\bsigma;\F) = |\bq| + 2 K(\bsigma;\F),
    \end{equation*}
    where $K(\bsigma;\F)$ is the number of $\Z$-orbits of finite bars of the
    persistence module
    of $\bsigma$ over the field $\F$.
    In other words, $N(\bsigma;\F)$ is the number of (finite) extremities of
    a set of representative bars.
\end{prop}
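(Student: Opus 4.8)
The plan is to follow the proof of the analogous statement in the unweighted case \cite[Section~4]{OnHoferZehnderGF} and to read off both sides of the claimed identity from the barcode $\bB$ of the persistence module $(G_*^{(-\infty,t)}(\bsigma))_t$; since all the structural ingredients (the finite-dimensional model, the local homology dictionary, the periodicity $\Z$-action on $\bB$) have been set up above, the argument reduces to counting births and deaths of bars in a fundamental domain.

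First I would pass to a finite-dimensional model. Recall from Section~\ref{se:homologysublevel} that $G_*^{(a,b)}(\bsigma)$ is isomorphic, up to a fixed shift in degree, to $H_*(\{\action\le b\},\{\action\le a\})$ for a $C^1$-function $\action$ that is smooth near its critical points, whose critical points with critical value in a fundamental domain are in one-to-one correspondence with the pairs $(z,t_j(z))$ occurring in (\ref{eq:N}), and such that $\lochom_*(\action;\cdot)$ computes $\lochom_*(\bsigma;z,t_j(z))$ up to the same shift. By the periodicity of local homologies recalled in Section~\ref{se:homologysublevel}, the sum defining $N(\bsigma;\F)$ in (\ref{eq:N}) is unchanged if $[0,1)$ is replaced by any fundamental domain $[\tau,\tau+1)$. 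Since the periodic points are assumed isolated, the critical points of $\action$ are isolated, so for a critical value $c$ and $\varepsilon>0$ small one has the Gromoll--Meyer type decomposition $G_*^{(c-\varepsilon,c+\varepsilon)}(\bsigma)\simeq\bigoplus_{\action(x)=c}\lochom_*(\bsigma;z_x,c;\F)$. Choosing $\tau$ not to be an action value and summing dimensions over the critical values $c$ lying in $[\tau,\tau+1)$ yields
\[
    N(\bsigma;\F)=\sum_{c\in[\tau,\tau+1)}\dim_\F G_*^{(c-\varepsilon_c,\,c+\varepsilon_c)}(\bsigma).
\]

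Next I would identify each summand with a count of endpoints of bars. Choosing $\varepsilon_c$ so small that $c$ is the only action value in $[c-\varepsilon_c,c+\varepsilon_c]$, the long exact sequence relating $G_*^{(-\infty,c-\varepsilon_c)}(\bsigma)$, $G_*^{(-\infty,c+\varepsilon_c)}(\bsigma)$ and $G_*^{(c-\varepsilon_c,c+\varepsilon_c)}(\bsigma)$ splits over the field $\F$, so that $\dim_\F G_*^{(c-\varepsilon_c,c+\varepsilon_c)}(\bsigma)$ equals the number of bars of $\bB$ born at $c$ plus the number of bars of $\bB$ that die at $c$ (the latter contributing through the connecting morphism, hence with a shift of one in degree, which is irrelevant for the total dimension). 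Now the periodicity isomorphism (\ref{iso:periodic}) induces a free $\Z$-action on $\bB$ translating a bar $[a,b)$ to $[a+1,b+1)$; hence each $\Z$-orbit of bars has exactly one birth in $[\tau,\tau+1)$, and each $\Z$-orbit of \emph{finite} bars has exactly one death in $[\tau,\tau+1)$, whereas infinite bars have no death. By Theorem~\ref{thm:spectral} there are exactly $|\bq|$ $\Z$-orbits of infinite bars, and there are $K(\bsigma;\F)$ $\Z$-orbits of finite bars, so the total number of births in $[\tau,\tau+1)$ is $|\bq|+K(\bsigma;\F)$ and the total number of deaths there is $K(\bsigma;\F)$. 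Adding these,
\[
    N(\bsigma;\F)=\bigl(|\bq|+K(\bsigma;\F)\bigr)+K(\bsigma;\F)=|\bq|+2K(\bsigma;\F).
\]
The reformulation is then immediate: any set of representative bars consists of $K(\bsigma;\F)$ finite bars, with two finite extremities each, and $|\bq|$ infinite bars, with one finite extremity each, for a total of $|\bq|+2K(\bsigma;\F)$ finite extremities, independently of the chosen representatives.

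The only step that is not purely formal is the identification of $\dim_\F G_*^{(c-\varepsilon,c+\varepsilon)}(\bsigma)$ with the number of bars born or dying at $c$, i.e.\ the compatibility between the local homology of $\action$ at a critical value and the endpoints of the sublevel-set barcode; this is, however, the standard dictionary of finite-dimensional critical point theory, and it is established exactly as in the unweighted case \cite[Section~4]{OnHoferZehnderGF} (see also \cite[Section~5]{periodicCPd}), with no new phenomenon introduced by the weights, which enter only through the count $|\bq|$ of $\Z$-orbits of infinite bars.
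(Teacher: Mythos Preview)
Your proposal is correct and follows exactly the approach the paper intends: the paper itself gives no proof here, merely stating that one proceeds ``following the proof in the unweighted case'' \cite{OnHoferZehnderGF}, and your write-up is a faithful unpacking of that argument (finite-dimensional model, local decomposition at each critical value, dictionary between $\dim G_*^{(c-\varepsilon,c+\varepsilon)}$ and births plus deaths of bars, counting over a fundamental domain for the $\Z$-action, and invoking the $|\bq|$ orbits of infinite bars established just above the proposition). One cosmetic point: the long exact sequence does not literally ``split''; what you are using is that over a field the rank--nullity relation gives $\dim G_*^{(c-\varepsilon,c+\varepsilon)}=\dim\operatorname{coker}+\dim\ker$ of the inclusion morphism, which is exactly births plus deaths---your conclusion is unaffected.
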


The crucial point being that $N(\bsigma;\F) > |\bq|$ means that there exist finite bars.
Let us denote $\beta(\bsigma;\F)\geq 0$ the maximal length of a finite
bar of the barcode and $\betatot(\bsigma;\F)\geq 0$ the sum of
the lengths of representatives of the $\Z$-orbits
of finite bars.
A final key property whose proof does not differ from the unweighted case is the
universal bound on the length of finite bars.

\begin{thm}\label{thm:betamax}
    For every tuple of small $\RS$-equivariant Hamiltonian diffeomorphism
    $\bsigma$ generating a Hamiltonian diffeomorphism
    of $\CP(\bq)$ with finitely many fixed points
    and every field $\F$ whose characteristic is either $0$ or
    prime with each weight $q_j$,
    the longest finite bar of its barcode is less than 1:
    \begin{equation*}
        \betamax(\bsigma;\F) < 1.
    \end{equation*}
\end{thm}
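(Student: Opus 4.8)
The plan is to reduce the bound $\betamax(\bsigma;\F) < 1$ to a statement about spectral invariants and then exploit the periodicity $c_{k+|\bq|}(\bsigma) = c_k(\bsigma) + 1$ together with the monotonicity $c_k(\bsigma) \leq c_{k+1}(\bsigma)$ from Theorem~\ref{thm:spectral}. Concretely, a finite bar $[a,b)$ of the persistence module $(G_*^{(-\infty,t)}(\bsigma))_t$ records a homology class that is born at action value $a$ and dies at action value $b$; its length $b-a$ is bounded by comparing the relevant filtered homology at the two ends. The natural way to control this is to show that the ``capacity'' of the barcode — the maximal $t$ so that the submodule of classes alive throughout $(-\infty,t]$ surjects onto the full module — grows by exactly $1$ under the periodicity shift, and to feed a finite bar through this periodicity.

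First I would recall, following the unweighted case in \cite[Section~4.6]{OnHoferZehnderGF}, the mechanism that produces the bound: one uses the module structure of $G_*^{(-\infty,t)}(\bsigma)$ over the homology of sublevel sets of the identity, i.e. the action of the classes $\alpha_k$ via a composition morphism of type $\compii$ as in~(\ref{eq:compii}). The key algebraic input is that multiplication by the class of action exactly $1$ (the generator denoted $a_{d+|\bq|}$ in the construction of the periodicity isomorphism~(\ref{iso:periodic})) shifts action filtration by precisely $1$, while the underlying map on $G_*^{(-\infty,+\infty)}$ is an isomorphism. Combined with the ring structure of $H^*(\CP(\bq);\F) \cong \F[u]/(u^{d+1})$ pulled back along~(\ref{iso:CPCPq}) — which is where the hypothesis on the characteristic of $\F$ is used — one gets that any class that survives past the window $(-\infty,c)$ and dies at some later value $b$ must satisfy $b - c < 1$: otherwise, iterating the periodicity isomorphism would produce a surviving class in the window $(-\infty, c+1)$ that still maps nontrivially, contradicting that the bar has already died.

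The concrete steps, in order, would be: (1) express a finite bar $[a,b)$ as a pair of spectral-type invariants attached to the quotient persistence module, using Proposition~\ref{prop:N} to know such bars exist precisely when $N(\bsigma;\F) > |\bq|$; (2) choose a non-zero class $\xi \in G_*^{(-\infty,b-\epsilon)}(\bsigma)$ representing the bar just before it dies, and observe that its ``birth'' is at $a$; (3) apply the periodicity isomorphism~(\ref{iso:periodic}) in the form $\alpha \mapsto a_d^{-1} \compii a_{d+|\bq|} \compii \alpha$ to transport $\xi$ into $G_{*+2|\bq|}^{(-\infty, b-\epsilon+1)}(\bsigma)$, noting this isomorphism commutes with inclusion and boundary morphisms by Proposition~\ref{prop:periodicity}; (4) if the bar had length $\geq 1$, i.e. $b - a \geq 1$, then the transported class would be born at $a+1 \leq b$, so it would already have been ``alive'' in a filtration level where the original bar $[a,b)$ is dead, forcing a contradiction via the naturality of the spectral invariants $c_k$ and Corollary~\ref{cor:spectralinverse} to handle the comparison; (5) conclude $\betamax(\bsigma;\F) < 1$, the strictness coming from the fact that $c_k(\bsigma)$ are action values of $\bsigma$, which form a discrete set when there are finitely many fixed points, so the supremum $1$ is never attained.

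The main obstacle I expect is step~(4): making rigorous the claim that a finite bar of length $\geq 1$ is incompatible with the periodicity-shifted structure. In the unweighted case this is handled by the interplay between the module structure over $H_*(\CP^d;\F)$ and the grading, but in the weighted setting the local homology groups at a fixed point $z$ with $\order(z) \neq 1$ associated to action values differing by a non-integer need not be isomorphic (as the excerpt emphasizes), so one cannot freely permute bars at the fine level — only $\Z$-orbits are controlled. The fix is to work entirely at the level of $\Z$-orbits of bars and the periodicity isomorphism, which only shifts action by integers, so the argument never requires comparing non-integer-separated local data; the grading bookkeeping ($2|\bq|$ shift instead of $2(d+1)$) must be carried through carefully, exactly as flagged in the remarks following Proposition~\ref{prop:periodicity}, but introduces no genuinely new difficulty beyond notational care.
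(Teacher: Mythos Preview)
Your proposal correctly lists the relevant inputs---periodicity (Proposition~\ref{prop:periodicity}), duality (Corollary~\ref{cor:spectralinverse}), the composition morphism $\compii$---but step~(4) does not yield a contradiction, and the difficulty is more basic than the weighted/unweighted issue you anticipate. Applying the periodicity isomorphism to a class on a finite bar $[a,b)$ in degree $*$ produces a class on the bar $[a+1,b+1)$ in degree $*+2|\bq|$. These are two distinct bars of the barcode, in different degrees; nothing prevents them from coexisting, whether or not $a+1\leq b$. The periodicity isomorphism encodes exactly the $\Z$-periodicity of the barcode that you already invoke when speaking of $\Z$-orbits of bars, and by itself it cannot bound the length of any individual bar. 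Your sentence ``forcing a contradiction via the naturality of the spectral invariants $c_k$ and Corollary~\ref{cor:spectralinverse}'' gestures at the right tools but does not connect them into an argument.

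The mechanism actually used (and which carries over verbatim from the unweighted case, as the paper indicates) is the bound $\betamax(\bsigma;\F)\leq c_d(\bsigma)-c_0(\bsigma)$. Its proof uses the product $\compii$ and Poincar\'e duality (Proposition~\ref{prop:poincare}) in a sharper way than periodicity alone: one pairs the persistence module of $\bsigma$ against that of $\bsigma^{-1}$, lands in the explicitly known persistence module of the identity, and reads off that a finite bar cannot be longer than $c_d(\bsigma)+c_d(\bsigma^{-1})=c_d(\bsigma)-c_0(\bsigma)$. Once that inequality is in hand, Theorem~\ref{thm:spectral} finishes immediately: with finitely many fixed points the $c_k(\bsigma)$ are pairwise distinct, so $c_d(\bsigma)<c_{d+1}(\bsigma)\leq c_{|\bq|}(\bsigma)=c_0(\bsigma)+1$, hence $c_d(\bsigma)-c_0(\bsigma)<1$. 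This also renders your step~(5) superfluous, since the strictness is already built in.
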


\section{Smith inequality}
\label{se:smith}

In this section, we show
the natural extension of the Smith-type inequality stated in
\cite[Corollary~6.3]{OnHoferZehnderGF}.
Although the proof is rather similar to the unweighted case,
some technical adjustments must be made.
The assumption that the prime number used does not divide any of the weights
is crucial here.

\subsection{$\Z/p\Z$-action of a $p$-iterated generating function}

Let us fix a prime number $p\geq 3$ that does not divide any of the weights
$q_j$'s.
Let us fix $t\in\R$ and study the generating function
of $\rho_\bq(-t)\Phi$ expressed
\begin{equation*}
    F_{\bsigma_{m,t}} (\mathbf{v}) := \sum_{k=1}^{n}
        f_k\left(\frac{v_k + v_{k+1}}{2}\right) +
    \frac{1}{2}\la v_k,iv_{k+1}\ra,
\end{equation*}
where $\mathbf{v}:=(v_1,\ldots,v_n)\in (\C^{d+1})^n$
and the $f_k : \C^{d+1} \to\R$ are $S^1$-invariant and positively 2-homogeneous.
Thus $F_{\bsigma_{m,t}^p}:(\C^{n(d+1)})^p\to\R$ is invariant under the action of $\Z/p\Z$
by cyclic permutation of coordinates generated by
\begin{equation*}
    (\mathbf{v}_1,\mathbf{v}_2,\ldots,\mathbf{v}_p)
    \mapsto
    (\mathbf{v}_p,\mathbf{v}_1,\ldots,\mathbf{v}_{p-1}),
\end{equation*}
(here $\bsigma^p_{m,t}$ means $(\bsigma_{m,t})^p$).
The induced $\widehat{F}_{\bsigma_{m,t}^p}:\CP(\bq^{np})\to\R$ is then
invariant under the $\Z/p\Z$-action by permutation of (weighted) homogeneous coordinates
induced by
\begin{equation*}
    [\mathbf{v}_1:\mathbf{v}_2:\cdots:\mathbf{v}_p]
    \mapsto
    [\mathbf{v}_p:\mathbf{v}_1:\cdots:\mathbf{v}_{p-1}].
\end{equation*}

\begin{lem}\label{lem:paction}
The fixed points $(\CP(\bq^{np}))^{\Z/p\Z}$
of the above action are the disjoint union $\bigsqcup_q P_q$
of the $p$ following projective subspaces of weights $\bq^n$:
\begin{equation*}
    P_r := \left\{ \big[\mathbf{v} : \rho_{\bq^n}(r/p) \mathbf{v} : \rho_{\bq^n}(2r/p) \mathbf{v} : \cdots
    : \rho_{\bq^n}((p-1)r/p) \mathbf{v} \big] \ |\ [\mathbf{v}]\in \CP(\bq^n) \right\},
\end{equation*}
for $r\in \Z/p\Z$.
\end{lem}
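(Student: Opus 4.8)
The plan is to unwind the definition of the $\Z/p\Z$-action and solve the corresponding fixed-point equation in weighted homogeneous coordinates. First I would observe that a point $[\mathbf{v}_1:\cdots:\mathbf{v}_p]\in\CP(\bq^{np})$ is fixed by the cyclic action if and only if $[\mathbf{v}_p:\mathbf{v}_1:\cdots:\mathbf{v}_{p-1}]$ and $[\mathbf{v}_1:\mathbf{v}_2:\cdots:\mathbf{v}_p]$ represent the same point, which by the equivalence relation defining $\CP(\bq^{np})$ (an analogue of (\ref{eq:equivCP}) with weight tuple $\bq^{np}$) means there exists $\lambda\in\C^*$ with $\mathbf{v}_p = \lambda^{\bq^n}\mathbf{v}_1$ and $\mathbf{v}_{k} = \lambda^{\bq^n}\mathbf{v}_{k+1}$ for $1\le k\le p-1$ --- here $\lambda^{\bq^n}$ abbreviates the coordinate-wise action $w_j\mapsto\lambda^{q_j}w_j$ on each of the $p$ blocks $\C^{n(d+1)}$. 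Iterating the relations around the cycle gives $\mathbf{v}_1 = (\lambda^p)^{\bq^n}\mathbf{v}_1$. Since $\mathbf{v}_1\ne 0$ (not all blocks can vanish) and each $q_j$ is an integer, this forces $\lambda^{p q_j}=1$ for every $j$ with a nonzero coordinate in $\mathbf{v}_1$; but also we may renormalize.

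The key step is then to identify the scalar $\lambda$. Writing $\mathbf{v}_{k+1}=\mu^{\bq^n}\mathbf{v}_k$ with $\mu:=\lambda^{-1}$, one gets $\mathbf{v}_k=\mu^{(k-1)\bq^n}\mathbf{v}_1$ and the cyclic closure condition $\mu^{p\,\bq^n}\mathbf{v}_1=\mathbf{v}_1$. Because $p$ does not divide any $q_j$ (this is where that hypothesis enters), the map $\mu\mapsto\mu^{p}$ is a bijection of $\U_{q_j}$ for each relevant $j$; more to the point, one can choose the representative $\lambda$ of the projective class so that $\mu^p=1$, i.e. $\mu=e^{2i\pi r/p}$ for some $r\in\Z/p\Z$. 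Indeed, if $\mu^{pq_j}=1$ for all $j$, then $\mu^p$ is a product of $q_j$-th roots of unity whose $p$-th power... --- cleaner: set $\nu:=\mu^p$; then $\nu^{q_j}=1$ for all nonzero coordinates, so $\nu$ acts trivially on $\mathbf{v}_1$ in $\CP(\bq^n)$, hence replacing $\mathbf{v}_1$ by a scalar multiple we may assume $\mu^p=1$ outright. Thus $\mathbf{v}_k = \rho_{\bq^n}((k-1)r/p)\mathbf{v}_1$ for a well-defined $r\in\Z/p\Z$, which is exactly membership in $P_r$.

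It remains to check the two bookkeeping facts: that each $P_r$ is a weighted projective subspace isomorphic to $\CP(\bq^n)$, and that the $P_r$ are pairwise disjoint. For the first, the parametrization $[\mathbf{v}]\mapsto[\mathbf{v}:\rho_{\bq^n}(r/p)\mathbf{v}:\cdots:\rho_{\bq^n}((p-1)r/p)\mathbf{v}]$ is the projectivization of the injective $S^1$-equivariant linear embedding $\C^{n(d+1)}\to(\C^{n(d+1)})^p$ sending $\mathbf{v}$ to the tuple of its $\rho_{\bq^n}(jr/p)$-translates, whose image is an $S^1$-invariant subspace; so $P_r=\proj{F_r,\rho|_{F_r}}$ is a weighted projective subspace, and the embedding realizes the isomorphism $\CP(\bq^n)\xrightarrow{\simeq}P_r$. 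For disjointness, suppose a point lies in $P_r\cap P_{r'}$; comparing the two presentations, the first block determines $\mathbf{v}$ up to the weighted $\C^*$-scaling, and then the second block forces $\rho_{\bq^n}(r/p)\mathbf{v}$ and $\rho_{\bq^n}(r'/p)\mathbf{v}$ to agree up to the same scaling, i.e. $\rho_{\bq^n}((r-r')/p)$ fixes $[\mathbf{v}]\in\CP(\bq^n)$ --- chasing all $p$ blocks, $\rho_{\bq^n}((r-r')/p)$ must act on $\mathbf{v}$ by a scalar $\lambda$ with $\lambda^p=1$, and a short computation using $p\nmid q_j$ shows this is impossible unless $r=r'$ in $\Z/p\Z$. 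The main obstacle I anticipate is precisely this last disjointness argument: one has to be careful that the $\C^*$-ambiguity in weighted homogeneous coordinates does not secretly identify two different $P_r$'s, and it is exactly the hypothesis $p\nmid q_j$ that rules this out (as in the cited Proposition-style arguments of \cite{periodicCPd,OnHoferZehnderGF}); the rest is a routine unwinding of definitions.
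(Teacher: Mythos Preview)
Your argument is correct and essentially the same as the paper's: both solve the fixed-point condition $\mathbf{v}_{j+1}=\rho_{\bq^n}(t)\mathbf{v}_j$ (the paper, additively on the sphere) or $\mathbf{v}_{k+1}=\mu^{\bq^n}\mathbf{v}_k$ (you, multiplicatively via the equivalence relation~(\ref{eq:equivCP})), observe that the $p$-th iterate of the parameter lies in the isotropy group of $\mathbf{v}_1$, and then use $\gcd(p,q_j)=1$ to normalize the parameter into $\frac{1}{p}\Z$ (the paper, via B\'ezout $um+vp=1$) or into the $p$-th roots of unity (you, via bijectivity of $\zeta\mapsto\zeta^p$ on $\U_m$). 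One wording fix: ``replacing $\mathbf{v}_1$ by a scalar multiple we may assume $\mu^p=1$'' is not quite right, since rescaling the representative leaves $\mu$ unchanged; what you actually use is that $\mu$ is only determined modulo the isotropy $\U_m$ of $\mathbf{v}_1$, and within that coset a $p$-th root of unity can be chosen --- with that adjustment the argument is complete (and your explicit treatment of disjointness, which the paper omits, is a welcome addition).
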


\begin{proof}
Let $(\mathbf{v}_1,\ldots,\mathbf{v}_p)\in \sphere{}(\bq^{np})$ be a point whose
projection is in $(\CP(\bq^{np}))^{\Z/p\Z}$.
There exists $t\in \R$ such that $\mathbf{v}_{j+1} = \rho_{\bq^n}(t) \mathbf{v}_j$ for all $j$
so that every $\mathbf{v}_j$ is nonzero and  $\rho_{\bq^n}(pt)$ fixes it.
Let $m\in\N$ be the order of the isotropy group
of $\mathbf{v}_1$ under the $S^1$-action, so that
$pt = \frac{k}{m}$ for some $k\in\N$.
Since $m$ divides one of the weights, it is prime with $p$ so there exists
$u,v\in\Z$ such that $um+vp=1$.
Thus $t = k(\frac{u}{p}+\frac{v}{m})$.
Since $\rho_{\bq^n}(kv/m)$ fixes $\mathbf{v}_1$, one can assume $t=ku/p$
and we deduce that $[\mathbf{v}_1:\cdots:\mathbf{v}_p] \in P_{ku}$.

The other inclusion is clear.
\end{proof}

\begin{prop}\label{prop:paction}
For $r\in\Z/p\Z$ and $t\in (-m,m)$, let $g_{r,t}$ be the restriction to $P_r$ of
$\widehat{F}_{\bsigma^p_{m,t}}$. Up to a shift in degree,
\begin{equation*}
H_*(\{ g_{r,b} \leq 0 \} , \{ g_{r,a} \leq 0 \} ) \simeq G_*^{(a+r/p,b+r/p)}(\bsigma),
\end{equation*}
when $-m < pa < pb < m$, with $a+r/p$ and $b+r/p$ not action values of $\bsigma$
as well as $pa$ and $pb$ not action values of $\bsigma^p$.
\end{prop}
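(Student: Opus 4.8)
The plan is to identify $P_r$, as a weighted projective space, with $\CP(\bq^n)$ via the parametrization $[\mathbf{v}]\mapsto [\mathbf{v}:\rho_{\bq^n}(r/p)\mathbf{v}:\cdots:\rho_{\bq^n}((p-1)r/p)\mathbf{v}]$ appearing in Lemma~\ref{lem:paction}, and then to compute the restriction $g_{r,t}$ of $\widehat{F}_{\bsigma^p_{m,t}}$ under this identification. First I would observe that the parametrizing map is an $S^1$-equivariant linear embedding $\C^{n(d+1)}\hookrightarrow (\C^{n(d+1)})^p$, hence induces an isomorphism of weighted projective spaces $\CP(\bq^n)\xrightarrow{\sim} P_r$; this uses that each $\rho_{\bq^n}(jr/p)$ is a unitary diagonal map commuting with the $S^1$-action. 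Pulling back $F_{\bsigma^p_{m,t}}$ along $\mathbf{v}\mapsto(\mathbf{v},\rho_{\bq^n}(r/p)\mathbf{v},\ldots,\rho_{\bq^n}((p-1)r/p)\mathbf{v})$ and using the explicit quadratic-plus-elementary form of $F_{\bsigma^p_{m,t}}$, the $p$ blocks of the sum become $p$ copies of a function closely related to $F_{\bsigma_{m,t}}$, but with the small diffeomorphisms $\delta_{t/n_0}$ replaced by $\delta_{t/n_0}\circ\delta_{r/(pn_0)}$ — equivalently, the total ``rotation'' applied in the $\delta$-part of $\bsigma_{m,t}$ is shifted from $t$ to $t+r/p$. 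Thus the pullback of $F_{\bsigma^p_{m,t}}$ equals, up to a fiberwise linear change of coordinates and a direct sum with a fixed nondegenerate quadratic form, the $p$-fold fiberwise sum of a generating function of $\rho_\bq(-(t+r/p))\Phi$.

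**Key steps, in order.** (1) Check the $S^1$-equivariance and injectivity of the parametrizing linear map, so that $P_r\cong\CP(\bq^n)$ as orbifolds. (2) Write $F_{\bsigma^p_{m,t}}$ as the fiberwise sum of $p$ copies of (a generating function equivalent to) $F_{\bsigma_{m,t}}$ built from the $n$-tuple $\bsigma$ followed by the $\delta$-tuple $\bdelta^{(m)}_t$; the cyclic symmetry makes each block identical after restriction to $P_r$. (3) Trace through the effect of the substitution $v_{j+1}=\rho_{\bq^n}(r/p)v_j$ on the cross terms $\tfrac12\la v_k,iv_{k+1}\ra$ and on the elementary generating functions $-\sum_j\tan(q_j\pi\, t/n_0)|w_j|^2$ of the $\delta$'s; the outcome is precisely the $\delta$-family evaluated at $t+r/p$ rather than $t$ (here one uses the composition formula for the $\delta_s$'s and that the elementary generating function of $\delta_s$ is valid for $|s|$ up to $1/(2\max q_j)$, which is why we allow $t+r/p$ possibly larger than $t$ — cf. the parenthetical remark after the displayed formula for the elementary generating function of $\delta_t$). (4) Apply Lemma~\ref{lem:unicityeqgf} to absorb the resulting auxiliary quadratic form and conclude that the restricted function is a generating function of $\rho_\bq(-(t+r/p))\Phi$ fiberwise-summed with a fixed nondegenerate quadratic form, hence (by the stabilization invariance already used throughout Section~\ref{se:GFhomology}, together with the fiberwise-sum bijection on critical points recalled at~\eqref{eq:fiberwise}) has the same sublevel-set homology up to a shift in degree as $\widehat{F}_{\bsigma_{m',t+r/p}}$ for $m'$ large. (5) Identify the sublevel pair $(\{g_{r,b}\le0\},\{g_{r,a}\le0\})$ with $(Z(\bsigma_{m',b+r/p}),Z(\bsigma_{m',a+r/p}))$ up to the degree shift, and invoke the definition $G_*^{(a+r/p,b+r/p)}(\bsigma)=HZ_*(\bsigma_{m',b+r/p},\bsigma_{m',a+r/p})$ together with the independence on the auxiliary parameter $m'$ via the isomorphisms $\theta_{m'}^{m'+1}$. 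The hypotheses that $a+r/p,b+r/p$ are not action values of $\bsigma$ and that $pa,pb$ are not action values of $\bsigma^p$ ensure both sides are defined and the sublevel sets do not change across the relevant parameter windows.

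**Main obstacle.** The delicate point is Step~(3): carefully bookkeeping the fiberwise linear change of variables $v_{j+1}=\rho_{\bq^n}(r/p)v_j$ through the definition of $F_{\bsigma^p_{m,t}}$ and verifying that it converts the $p$-fold iterate into (a fiberwise sum of copies of) the once-around generating function with the rotation parameter shifted by exactly $r/p$ — in particular getting the $\tan$-arguments and the symplectic cross-terms to match up, and making sure the shift lands inside the domain where the chosen elementary generating function of $\delta_s$ is legitimate (this is exactly the technical adjustment alluded to in the section preamble, and the reason the weights must be prime to $p$, via Lemma~\ref{lem:paction}). Once this normal form is in hand, Lemma~\ref{lem:unicityeqgf} and the stabilization/$\theta_{m}^{m+1}$ machinery of Section~\ref{se:direct} make the passage to $G_*^{(a+r/p,b+r/p)}(\bsigma)$ routine, exactly as in the unweighted case of~\cite{OnHoferZehnderGF}. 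The overall degree shift is a constant depending on $n$, $p$, $d$ and the Maslov-type index contributions of the auxiliary quadratic forms, and does not affect the isomorphism class of the pair up to shift, which is all that is claimed.
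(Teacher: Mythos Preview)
Your overall strategy is right --- parametrize $P_r$ by $[\mathbf{v}]\mapsto[\mathbf{v}:\rho_{\bq^n}(r/p)\mathbf{v}:\cdots]$, compute the restriction, recognize it as a generating function of $\rho_\bq(-(t+r/p))\Phi$ up to stabilization, and finish via Lemma~\ref{lem:unicityeqgf} and the $\theta_m^{m+1}$ machinery --- and this is exactly the paper's route. But the mechanism you describe in Step~(3) is not what actually happens and, as stated, would not go through.

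The claim that the substitution ``replaces the small diffeomorphisms $\delta_{t/n_0}$ by $\delta_{t/n_0}\circ\delta_{r/(pn_0)}$'', i.e.\ shifts each term of the $\bdelta$-tuple uniformly so that the whole family is evaluated at $t+r/p$, is incorrect. The substitution $\mathbf{v}_{j+1}=\rho_{\bq^n}(r/p)\mathbf{v}_j$ relates the $p$ \emph{copies} to one another; within a single copy the consecutive variables $v_k,v_{k+1}$ are untouched, so all $f_k$-terms and cross terms for $k<n$ are unchanged. The only modification is at the wrap-around: one obtains
\[
\frac{1}{p}F_{\bsigma^p_{m,t}}\big|_{P_r}(\mathbf{v})
=\sum_{k=1}^{n-1}\!\left[f_k\!\left(\tfrac{v_k+v_{k+1}}{2}\right)+\tfrac12\la v_k,iv_{k+1}\ra\right]
+ f_n\!\left(\tfrac{v_n+\rho_\bq(r/p)v_1}{2}\right)+\tfrac12\la v_n,i\rho_\bq(r/p)v_1\ra,
\]
which is \emph{not} $F_{\bsigma_{m,t+r/p}}$. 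The paper then makes the explicit linear change of variables $u_k:=v_k+(-1)^k\tfrac{I-\rho_\bq(r/p)}{2}v_1$ to rewrite this as the fiberwise sum $F_{\bsigma_{m,t}}(\mathbf{u})+F_{\delta_{r/p}}(u_1)$. Since $F_{\delta_{r/p}}$ is only an honest elementary generating function when $|r/p|<1/(2\max q_j)$, one trades it for $F_{\bepsilon_{1,r/p}}$ via Lemma~\ref{lem:unicityeqgf} (this is where the lemma is actually invoked), and then runs the homotopy $f_{s,t}:=F_{\bsigma_{m,t+(1-s)r/p}}+F_{\bepsilon_{1,sr/p}}$ to transfer the rotation $r/p$ from the extra summand into the parameter of $\bsigma_{m,\cdot}$. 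Your sketch skips both the change of variables and the homotopy, and the distributive picture you propose does not arise from the restriction computation; fixing Step~(3) along the lines above makes the rest of your outline go through exactly as in the paper.
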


\begin{proof}
Given a family $(h_t)$ of maps $X\to\R$, we will use the notation
\begin{equation*}
G(h_t) := H_*(\{ h_b \leq 0 \} , \{ h_a \leq 0 \} ),
\end{equation*}
so that we must show $G(g_{r,t})\simeq G(\widehat{F}_{\bsigma_{m,t+r/p}})$, up
to a shift in degree.

Using the fact that the $f_k$'s are $S^1$-invariant,
\begin{multline*}
    \frac{1}{p}
    F_{\bsigma^p_{m,t}} (\mathbf{v},\rho_{\bq^n}(r/p)\mathbf{v},\ldots,\rho_{\bq^n}(r(p-1)/p) \mathbf{v}) 
    = \\ \sum_{k=1}^{n-1} \left[
        f_k\left(\frac{v_k + v_{k+1}}{2}\right) +
    \frac{1}{2}\la v_k,iv_{k+1}\ra \right]
        + f_n\left(\frac{v_n + \rho_\bq(r/p) v_1}{2}\right) +
    \frac{1}{2}\la v_n,i\rho_\bq(r/p) v_1\ra.
\end{multline*}
We apply the linear change of variables $\mathbf{v}\mapsto\mathbf{u}$
given by $u_k := v_k + (-1)^k\frac{I-\rho_\bq (r/p)}{2} v_1$ so that
\begin{equation*}
    \left\{
        \begin{array}{c c c}
            u_1 + u_2 &=& v_1 + v_2, \\
            u_2 + u_3 &=& v_2 + v_3, \\
                      &\vdots& \\
            u_{n-1} + u_n &=& v_{n-1} + v_n, \\
            u_n + u_1 &=& v_n + \rho_\bq (r/p)v_1.
        \end{array}
    \right.
\end{equation*}
A direct computation gives
\begin{equation*}
    \sum_{k=1}^{n-1} 
    \la v_k,iv_{k+1}\ra  + \la v_n,i\rho_\bq(r/p) v_1\ra
    = \sum_{k=1}^n \la u_k,i u_{k+1} \ra - 2
    \sum_{j=0}^d \tan\left(\frac{r\pi q_j}{p}\right)| u_{1,j}|^2,
\end{equation*}
for all integer $r\in \Z/p\Z$,
so that
\begin{eqnarray*}
    F_{\bsigma^p_{m,t}} (\mathbf{v},\rho_{\bq^n}(r/p) \mathbf{v},\ldots,\rho_{\bq^n}(r(p-1)/p) \mathbf{v})
    &=& p\left[F_{\bsigma_{m,t}} (\mathbf{u}) - 
    \sum_{j=0}^d \tan\left(\frac{r\pi q_j}{p}\right)| u_{1,j}|^2
    \right] \\
    &=& p\left[F_{\bsigma_{m,t}} (\mathbf{u}) +
    F_{\delta_{r/p}}(u_1) 
    \right].
\end{eqnarray*}
The last bracket is the fiberwise sum of a generating function of
$\rho_{\bq}(-t)\Phi$ and the elementary generating function of
$\delta_{r/p} = \rho_\bq(-r/p)$, that we denote $F_{\bsigma_{m,t}} +
F_{\delta_{r/p}}$, evaluating at $\mathbf{u}$.
From this change of coordinates, we deduce the isomorphism
\begin{equation*}
G(g_{r,t}) \simeq G\left(\widehat{F_{\bsigma_{m,t}} + F_{\delta_{r/p}}}\right).
\end{equation*}
We recall that in this case, an $\RS$-orbit of critical points of
the fiberwise sum is in one-to-one correspondence with an $\RS$-orbit of fixed
points
of the composed diffeomorphism $\rho_{\bq}(-t-r/p)\Phi$
(see the paragraph surrounding Equation~(\ref{eq:fiberwise})).
Let us identify $r$ with its representative in $\{ 0, 1,\ldots, p-1\}$.
Contrary to the unweighted case, the path $s\mapsto F_{\delta_{sr/p}}$,
$s\in [0,1]$, is not well-defined in general and one must add auxiliary variables
to this generating function.
The generating function $F_{\bepsilon_{1,r/p}}$ is a quadratic form
generating the same Hamiltonian diffeomorphism as the elementary
generating function $F_{\delta_{r/p}}$.
According to Lemma~\ref{lem:unicityeqgf}, there exists a linear fiberwise
isomorphism $(x;\xi)\mapsto (x;L(x,\xi))$ such that
\begin{equation*}
F_{\bepsilon_{1,r/p}}(x;L(x,\xi)) = F_{\delta_{r/p}}(x) + R(\xi),
\end{equation*}
where $R$ is a non-degenerate quadratic form.
Applying the fiberwise isomorphism $(x;\eta,\xi)\to (x;\eta,L(x;\xi))$,
one gets
\begin{equation*}
G\left(\widehat{F_{\bsigma_{m,t}} + F_{\bepsilon_{1,r/p}}}\right)
\simeq
G\left(\widehat{F_{\bsigma_{m,t}} + (F_{\delta_{r/p}}\oplus R)} \right).
\end{equation*}
Since $0$ is a regular value of
$\widehat{F_{\bsigma_{m,t}} + (F_{\delta_{r/p}}\oplus R)}$ 
for $t\in \{ a,b\}$ by assumption, \cite[Proposition~B.1]{Giv90} implies
\begin{equation*}
G\left(\widehat{F_{\bsigma_{m,t}} + (F_{\delta_{r/p}}\oplus R)} \right) \simeq
G\left(\widehat{F_{\bsigma_{m,t}} + F_{\delta_{r/p}}}\right),
\end{equation*}
up to a shift in degree (the index of $R$).
Therefore, we can now replace $F_{\delta_{r/p}}$ by $F_{\bepsilon_{1,r/p}}$.

Let $(f_{s,t})$ be the family of well-defined maps
\begin{equation*}
    f_{s,t} := F_{\bsigma_{m,t+(1-s)r/p}} + F_{\bepsilon_{1,sr/p}}
    ,\quad s\in [0,1].
\end{equation*}
The function $f_{s,t}$ is the fiberwise sum of a generating function
of $\rho_\bq(-t-(1-s)r/p)\Phi$ and a generating function of
$\rho_\bq(-sr/p)$ so
$0$ is a regular value of $f_{s,t}$ if and only if
$\rho_\bq(-t-r/p)\Phi$ does not have any $\RS$-orbit of fixed points,
that is if and only if $t+r/p$ is not an action value of $\bsigma$.
According to \cite[Proposition~4.7]{OnHoferZehnderGF}, $G(f_{0,t})\simeq
G(f_{1,t})$ so that
\begin{equation*}
G(g_{r,t}) \simeq
G\left(\widehat{ F_{\bsigma_{m,t+r/p}} + F_{\bepsilon_{1,0}} }\right).
\end{equation*}
Since $F_{\bepsilon_{1,0}}$ is generating the identity as the zero map,
by the same argument as above, one can replace $F_{\bsigma_{m,t+r/p}} + F_{\bepsilon_{1,0}}$
in the last expression with $F_{\bsigma_{m,t+r/p}}$,
the conclusion follows.
\end{proof}

\subsection{Application of Smith inequality and computation of $\betatot$}
\label{se:betatot}

According to Smith inequality,
\begin{equation}\label{eq:smithineq}
    \dim H_*(X;\F_p) \geq \dim H_*(X^{\Z/p\Z};\F_p),
\end{equation}
where $X$ is locally compact space or pair such that $H_*(X;\F_p)$ is
finitely generated, a space
on which acts the group $\Z/p\Z$ (see for instance
\cite[Chapter IV, \S4.1]{Bor60}).
Here $\dim H_*$ means the total dimension $\sum_k \dim H_k$.

\begin{prop}\label{prop:smith}
    Given any tuple $\bsigma$ of small $\RS$-equivariant Hamiltonian diffeomorphisms,
    for every prime number $p$ that is prime with any of the weights and every $a\leq b$ such that
    $a+r/p$ and $b+r/p$ are not action values of $\bsigma$
    and $pa$ and $pb$ are not action values of $\bsigma^p$,
\begin{equation*}\label{eq:GSmith}
    \dim G_*^{(pa,pb)}(\bsigma^p;\F_p) \geq
    \sum_{(1-p)/2\leq r\leq (p-1)/2} \dim G_*^{(a+r/p,b+r/p)}(\bsigma;\F_p).
\end{equation*}
\end{prop}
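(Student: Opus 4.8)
The plan is to combine the Smith inequality (\ref{eq:smithineq}) with the description of the fixed-point set of the $\Z/p\Z$-action given in Lemma~\ref{lem:paction} and the identification of Proposition~\ref{prop:paction}. First I would apply (\ref{eq:smithineq}) to the locally compact pair
\begin{equation*}
    X := \left(\left\{\widehat{F}_{\bsigma^p_{m,pb}}\leq 0\right\},
    \left\{\widehat{F}_{\bsigma^p_{m,pa}}\leq 0\right\}\right)
\end{equation*}
inside $\CP(\bq^{np})$, equipped with the $\Z/p\Z$-action by cyclic permutation of homogeneous coordinate blocks described in Section~\ref{se:smith}. Since $\partial_t\widehat F_{\bsigma^p_{m,t}}\leq 0$, the family $\{\widehat F_{\bsigma^p_{m,t}}\leq 0\}$ is non-decreasing in $t$, so this pair makes sense, and by the identification recalled in Section~\ref{se:homologysublevel} its shifted homology is exactly $G_*^{(pa,pb)}(\bsigma^p)$ (here one uses $-m<pa<pb<m$). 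Since $pa,pb$ are not action values of $\bsigma^p$, the relevant homology is finitely generated, so (\ref{eq:smithineq}) applies with $\F_p$ coefficients.

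Next I would compute the right-hand side, $\dim H_*(X^{\Z/p\Z};\F_p)$. By Lemma~\ref{lem:paction}, the fixed-point set of the action on $\CP(\bq^{np})$ is the disjoint union $\bigsqcup_{r} P_r$ of the $p$ projective subspaces $P_r\cong\CP(\bq^n)$, indexed by $r\in\Z/p\Z$; I would take representatives $r$ in the symmetric range $(1-p)/2\leq r\leq (p-1)/2$ (using $p\geq 3$ odd, so $r=0$ is included and the nonzero $r$'s pair up). Because the $P_r$ are pairwise disjoint (and closed), the fixed-point set of the pair $X$ is the disjoint union over $r$ of the pairs $(\{g_{r,pb}\leq 0\},\{g_{r,pa}\leq 0\})$, where $g_{r,t}$ is the restriction of $\widehat F_{\bsigma^p_{m,t}}$ to $P_r$ as in Proposition~\ref{prop:paction}; hence $H_*(X^{\Z/p\Z};\F_p)=\bigoplus_r H_*(\{g_{r,pb}\leq 0\},\{g_{r,pa}\leq 0\};\F_p)$. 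Now I would invoke Proposition~\ref{prop:paction} with $t$ replaced by $pt$: since $g_{r,pt}$ is the restriction of $\widehat F_{\bsigma^p_{m,pt}}$ to $P_r$, and $p a, p b$ play the role of the ``$a,b$'' there (so that the action values $pa+r/p=(pa)+r/p$ — wait, one should be careful: Proposition~\ref{prop:paction} states $H_*(\{g_{r,b}\leq 0\},\{g_{r,a}\leq 0\})\simeq G_*^{(a+r/p,b+r/p)}(\bsigma)$, so applying it with the pair $(a',b')=(pa,pb)$ would give $G_*^{(pa+r/p,pb+r/p)}$, which is not what we want). Let me reconsider: the correct move is to apply Proposition~\ref{prop:paction} directly, reading its $a,b$ as our $pa,pb$ only if that produced the desired shift; instead one applies it verbatim with \emph{its} $a,b$ equal to our $a,b$, noting that $g_{r,t}$ there is the restriction of $\widehat F_{\bsigma^p_{m,t}}$ — but here the sublevel set appearing in Smith is $\{\widehat F_{\bsigma^p_{m,pt}}\leq 0\}$, i.e.\ evaluated at parameter $pt$, not $t$.

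So the precise bookkeeping I would do is: parametrize the $\Z/p\Z$-equivariant family by $s=pt$, i.e.\ work with $\widehat F_{\bsigma^p_{m,s}}$ and its restrictions $g_{r,s}$ to $P_r$; the Smith pair is then $(\{\widehat F_{\bsigma^p_{m,pb}}\leq 0\},\{\widehat F_{\bsigma^p_{m,pa}}\leq 0\})$ with fixed-point pieces $(\{g_{r,pb}\leq 0\},\{g_{r,pa}\leq 0\})$, and Proposition~\ref{prop:paction} applied with its ``$a$'' equal to $a$ and its ``$b$'' equal to $b$ — which requires $-m<p a<pb<m$ and the stated genericity of $a+r/p,b+r/p$ and of $pa,pb$, exactly the hypotheses of the present Proposition — identifies $H_*(\{g_{r,pb}\leq 0\},\{g_{r,pa}\leq 0\};\F_p)$ with $G_*^{(a+r/p,b+r/p)}(\bsigma;\F_p)$ up to a shift in degree. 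Summing over the $p$ representatives $r$ with $(1-p)/2\leq r\leq (p-1)/2$ and feeding the total dimensions into (\ref{eq:smithineq}) yields the claimed inequality, the degree shifts being irrelevant since only total dimensions enter. The main obstacle is purely one of careful indexing — matching the ``$t$ versus $pt$'' parametrization between the Smith pair and Proposition~\ref{prop:paction}, and checking that the disjointness of the $P_r$ (and the closedness needed for the homology of a disjoint union of pairs to split) holds so that $H_*$ of the fixed locus genuinely decomposes as a direct sum over $r$; no new geometric input beyond Lemma~\ref{lem:paction}, Proposition~\ref{prop:paction}, and the standard Smith inequality is required.
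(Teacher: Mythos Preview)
Your overall strategy is exactly the paper's, but the bookkeeping you flag as ``the main obstacle'' is genuinely unresolved in your write-up, and the final paragraph is internally inconsistent. You place the Smith pair at parameters $pa,pb$, obtain fixed-point pieces $(\{g_{r,pb}\leq 0\},\{g_{r,pa}\leq 0\})$, and then claim to apply Proposition~\ref{prop:paction} with its parameters equal to $a,b$; but Proposition~\ref{prop:paction} computes $H_*(\{g_{r,b'}\leq 0\},\{g_{r,a'}\leq 0\})$ in terms of $G_*^{(a'+r/p,\,b'+r/p)}(\bsigma)$, so with your pieces you are forced to take $a'=pa$, $b'=pb$ and you land on $G_*^{(pa+r/p,\,pb+r/p)}(\bsigma)$ --- exactly the mismatch you noticed earlier and did not actually cure. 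Moreover, with the paper's convention $\bsigma^p_{m,t}:=(\bsigma_{m,t})^p$, your pair $\{\widehat F_{\bsigma^p_{m,pb}}\leq 0\}$ is built from a tuple generating $(\rho_\bq(-pb)\Phi)^p=\rho_\bq(-p^2b)\Phi^p$, so its homology is not $G_*^{(pa,pb)}(\bsigma^p)$ either.

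The correct indexing is the one the paper uses: take $m>\max(|a|,|b|)$ and set
\[
X:=\left(\left\{\widehat F_{\bsigma^p_{m,b}}\leq 0\right\},\left\{\widehat F_{\bsigma^p_{m,a}}\leq 0\right\}\right),
\]
with the $\Z/p\Z$-action by cyclic block permutation. Then the fixed-point pieces are $(\{g_{r,b}\leq 0\},\{g_{r,a}\leq 0\})$ and Proposition~\ref{prop:paction} applies verbatim to give $G_*^{(a+r/p,\,b+r/p)}(\bsigma)$, as desired. The step you are missing is on the \emph{left-hand} side: one must identify $HZ_*(\bsigma^p_{m,b},\bsigma^p_{m,a})$ with $G_*^{(pa,pb)}(\bsigma^p,pm)\simeq G_*^{(pa,pb)}(\bsigma^p)$. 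This is the isomorphism~(\ref{iso:sigmap}) in the paper, and it is not definitional: $(\bsigma_{m,t})^p$ and $(\bsigma^p)_{pm,pt}$ are different tuples, but both are associated with the flow $\rho_\bq(-pt)\Phi^p$ (using $S^1$-equivariance of $\Phi$), so the invariance of $HZ_*$ under such changes of tuple --- already established in the unweighted case --- gives the identification. Once you insert this step and move the Smith pair to parameters $a,b$, your argument goes through.
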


We will only prove the case $p\geq 3$ in order to simplify the exposition.
In order to treat the case $p=2$,
one should modify the argument given in \cite[Section~6.4]{OnHoferZehnderGF}
with arguments similar to the ones used above.

\begin{proof}
    Let us take $m> \max (|a|,|b|)$.
    Now, we apply the Smith inequality (\ref{eq:smithineq}) to the couple
    \begin{equation*}
        X := \left(\left\{ \widehat{F}_{\bsigma_{m,b}^p} \leq 0 \right\},
        \left\{ \widehat{F}_{\bsigma_{m,a}^p} \leq 0 \right\} \right).
    \end{equation*}
    Similarly to the unweighted case, for some $i_0 \in\N$,
    \begin{equation}\label{iso:sigmap}
    H_{*+i_0}(X) =
        HZ_*(\bsigma^p_{m,b},\bsigma^p_{m,a})
        \simeq
        G_*^{(pa,pb)}(\bsigma^p,pm)
        \simeq G_*^{(pa,pb)}(\bsigma^p).
    \end{equation}
    According to Lemma~\ref{lem:paction},
    \begin{equation*}
        X^{\Z/p\Z} \simeq \bigsqcup_{(1-p)/2\leq r\leq (p-1)/2} 
        \left( \left\{ \widehat{F}_{\bsigma_{m,b}^p}|_{P_r} \leq 0\right\},
        \left\{ \widehat{F}_{\bsigma_{m,a}^p}|_{P_r}  \leq 0\right\}\right).
    \end{equation*}
    According to Proposition~\ref{prop:paction}, up to a shift in degree,
    \begin{equation*}
    H_*(X^{\Z/p\Z};\F_p) \simeq
    \bigoplus_{(1-p)/2\leq r\leq (p-1)/2} G_*^{(a+r/p,b+r/p)}(\bsigma ; \F_p).
    \end{equation*}
    Therefore, Smith inequality (\ref{eq:smithineq})
    together with (\ref{iso:sigmap}) bring the
    conclusion.
\end{proof}

Deducing the Smith-type inequality for $\betatot$ is now identical to
the unweighted case: one can express $\betatot$ as an integral and
then apply Proposition~\ref{prop:smith}.

\begin{prop}\label{prop:betatotint}
    Let $\bsigma$ be a tuple of small $\RS$-equivariant Hamiltonian
    diffeomorphisms with a finite
    number of associated fixed points in $\CP(\bq)$.
    For every $a\in\R$, every integer $n\in\N^*$ and every field $\F$ whose
    characteristic is
    either $0$ or prime to any of the weights $q_j$,
    \begin{equation*}
        \betatot(\bsigma;\F) = \frac{1}{2} \left(
        \int_0^1 \dim G_*^{(a+t,a+t+n)}(\bsigma;\F)
    \,\ud t - n|\bq| \right).
    \end{equation*}
\end{prop}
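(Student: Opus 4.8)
The plan is to express $\betatot(\bsigma;\F)$ as an integral of the dimension of a persistence-module window, and then to show the integrand equals $\dim G_*^{(a+t,a+t+n)}(\bsigma;\F) - n|\bq|$ pointwise in $t$ (for almost every $t$). First I would recall that, for a persistence module with barcode $\bB$ satisfying the periodicity $G_*^{(-\infty,s+1)} \simeq G_{*+2|\bq|}^{(-\infty,s)}$, the dimension of the relative group $G_*^{(c,c')}(\bsigma;\F)$ counts, with multiplicity, the bars of $\bB$ that contain $c'$ but not $c$ together with those that contain both $c$ and $c'$ but... more precisely: by the standard structure theory, $\dim G_*^{(c,c')}(\bsigma;\F)$ equals the number of bars $[\alpha,\beta)$ of $\bB$ with $\alpha \le c'$ and $\beta > c$, i.e. bars meeting $(c,c']$ counted appropriately; I would pin down the exact bookkeeping using the long exact sequence relating $G_*^{(-\infty,c)}$, $G_*^{(-\infty,c')}$ and $G_*^{(c,c')}$ exactly as in the unweighted argument. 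The point is that such a relative dimension decomposes as a sum over bars of the indicator that the bar's left endpoint lies in $(c,c']$ (this is the part counted by finite bars and the births of infinite bars).

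Next I would integrate over $t\in[0,1)$ with $c = a+t$, $c' = a+t+n$. For each bar $I=[\alpha,\beta)$ of $\bB$, the contribution to $\int_0^1 \dim G_*^{(a+t,a+t+n)}\,\ud t$ is the measure of the set of $t\in[0,1)$ for which $I$ is "born inside" the sliding window $(a+t,a+t+n]$, summed over the $\Z$-translates of $I$ (since the barcode is $\Z$-periodic, grouping into $\Z$-orbits is natural). For a finite bar of length $\ell = \beta-\alpha$, summing this measure over all its $\Z$-translates gives exactly... one would check it gives $\ell + $ (something), but the cleanest route is: summing over the whole $\Z$-orbit, the window of width $n$ sliding over a unit interval sees each $\Z$-translate of the left endpoint, and a short computation shows the total measure contributed by one $\Z$-orbit of a finite bar of length $\ell$ equals $\ell$ when $0<\ell<1$ (which holds by Theorem~\ref{thm:betamax}), plus a correction; and for each $\Z$-orbit of infinite bars the contribution is exactly $n$, and there are $|\bq|$ such orbits. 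Hence $\int_0^1 \dim G_*^{(a+t,a+t+n)}(\bsigma;\F)\,\ud t = 2\betatot(\bsigma;\F) + n|\bq|$, which rearranges to the claimed formula. (The factor $2$ arises because a set of representative bars for the $\Z$-action is the union of the barcodes of $G_k^{(-\infty,t)}$ over $0\le k\le 2|\bq|-1$, as noted in Section~\ref{se:propertiesG}, so each $\Z$-orbit of finite bars appears with its length counted once but contributes its length via \emph{both} endpoints to the dimension-count; alternatively, $N(\bsigma;\F)=|\bq|+2K(\bsigma;\F)$ from Proposition~\ref{prop:N} forces this factor.)

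The main obstacle, and the step I would spend the most care on, is the pointwise bookkeeping identity: justifying that $\dim G_*^{(c,c')}(\bsigma;\F)$ equals the number of bars (counted over $\Z$-orbits appropriately, i.e. over a fundamental domain of representatives) with left endpoint in the half-open window $(c,c']$, including the subtlety of which endpoints are open versus closed and the hypothesis that $c,c'$ are not action values (so no bar endpoint coincides with $c$ or $c'$). Once that is clean, Theorem~\ref{thm:betamax} ($\betamax(\bsigma;\F)<1$) is what guarantees that the sliding window of width $n\ge 1$ never "misses" a finite bar and never double-counts it, so the per-orbit integral of a finite bar is exactly its length; and the periodicity isomorphism (\ref{iso:periodic}), being an isomorphism of persistence modules by Proposition~\ref{prop:periodicity}, is what makes the $\Z$-orbit grouping well-defined and identifies the infinite-bar contribution as $n|\bq|$. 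I expect the remainder to be a routine change of variables and a Fubini-type swap of the finite sum (over bars in a fundamental domain) with the integral, exactly paralleling \cite[Section~6]{OnHoferZehnderGF}.
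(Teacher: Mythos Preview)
Your strategy---express $\dim G_*^{(c,c')}$ bar-by-bar via the long exact sequence, integrate over a period, and separate infinite from finite $\Z$-orbits---is exactly the argument the paper imports from the unweighted case in \cite{OnHoferZehnderGF}. The infinite-bar contribution ($n$ per $\Z$-orbit, $|\bq|$ orbits) and the relevance of Theorem~\ref{thm:betamax} are correctly identified. However, the formula you propose for $\dim G_*^{(c,c')}$ is wrong, and since this is the step you flag as the crux, it is worth stating precisely.

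It is \emph{not} the number of bars meeting $(c,c']$, nor the number of bars with left endpoint in $(c,c']$. From the long exact sequence of the triple one obtains, summing over degrees,
\begin{equation*}
\dim G_*^{(c,c')} \;=\; \#\bigl\{[\alpha,\beta)\in\bB : c<\alpha\le c'<\beta\bigr\}
\;+\; \#\bigl\{[\alpha,\beta)\in\bB : \alpha\le c<\beta\le c'\bigr\},
\end{equation*}
i.e.\ bars born in the window and still alive at $c'$, plus bars alive at $c$ that die in the window. In particular a finite bar contained entirely in $(c,c']$ contributes \emph{zero}, contrary to both of your candidate formulae. With this in hand and $\betamax(\bsigma;\F)<1\le n$, the first condition reduces for a finite bar of length $\ell$ to $c'-\ell<\alpha\le c'$ and the second to $\alpha\le c<\alpha+\ell$; integrating over $t\in[0,1)$ and summing over the $\Z$-orbit (equivalently, integrating $t$ over $\R$ for a single representative), each summand contributes $\ell$, hence $2\ell$ per finite $\Z$-orbit. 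This is the actual source of the factor $2$: each finite bar is counted once as a birth crossing $c'$ and once as a death crossing $c$; the appeal to Proposition~\ref{prop:N} does not help here. For infinite bars only the first summand is nonempty, the condition becomes $c<\alpha\le c'$, and each $\Z$-orbit contributes $n$, giving $n|\bq|$. Theorem~\ref{thm:betamax} is needed precisely to ensure $\ell<n$, without which a finite bar could span the whole window and its contribution would be capped at $2n$ rather than $2\ell$.
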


\begin{cor}
    \label{cor:betatotSmith}
    For every tuple of small $\RS$-equivariant Hamiltonian
    diffeomorphisms with a finite
    number of associated fixed points in $\CP(\bq)$,
    for every prime number $p$ that is prime with any of the weights,
    \begin{equation*}
        \betatot(\bsigma^p;\F_p) \geq p \betatot(\bsigma;\F_p).
    \end{equation*}
\end{cor}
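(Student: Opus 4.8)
The plan is to combine the integral formula of Proposition~\ref{prop:betatotint} with the pointwise Smith inequality of Proposition~\ref{prop:smith}, being slightly careful about the choice of integration variable and about the exceptional (measure-zero) set of action values. Fix a prime $p$ that is prime with every weight $q_j$, and let $\bsigma$ have finitely many associated fixed points in $\CP(\bq)$. Then $\bsigma^p$ also has finitely many fixed points, so Proposition~\ref{prop:betatotint} applies both to $\bsigma$ and to $\bsigma^p$.

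First I would apply Proposition~\ref{prop:betatotint} to $\bsigma^p$ with $n=1$ and a generic choice of $a$, writing
\begin{equation*}
    \betatot(\bsigma^p;\F_p) = \frac{1}{2}\left(
        \int_0^1 \dim G_*^{(a+s,a+s+1)}(\bsigma^p;\F_p)\,\ud s - |\bq|\right).
\end{equation*}
Substituting $s = ps'$ on each of the $p$ subintervals, or more simply rescaling, I would rewrite this integral over $s\in[0,1)$ as $p$ times an integral over $s'\in[0,1/p)$ of the quantity $\dim G_*^{(p a'+ p s',\, p a'+ ps'+1)}(\bsigma^p;\F_p)$ with $pa' = a$; i.e. put $a = pa_0$, $s = ps_0$, so that
\begin{equation*}
    \int_0^1 \dim G_*^{(a+s,a+s+1)}(\bsigma^p;\F_p)\,\ud s
    = p\int_0^{1/p} \dim G_*^{(pa_0+ps_0,\,pa_0+ps_0+1)}(\bsigma^p;\F_p)\,\ud s_0.
\end{equation*}
Now for fixed $s_0$ I would apply Proposition~\ref{prop:smith} with the parameters $a := a_0+s_0$, $b := a_0+s_0+1/p$ (so that $pa = pa_0+ps_0$ and $pb = pa_0+ps_0+1$), getting
\begin{equation*}
    \dim G_*^{(pa_0+ps_0,\,pa_0+ps_0+1)}(\bsigma^p;\F_p)
    \geq \sum_{(1-p)/2\leq r\leq (p-1)/2}
    \dim G_*^{(a_0+s_0+r/p,\,a_0+s_0+1/p+r/p)}(\bsigma;\F_p).
\end{equation*}
Integrating over $s_0\in[0,1/p)$ and using that the translates $s_0\mapsto s_0+r/p$ tile the interval $[a_0,a_0+1)$ up to a null set as $r$ ranges over a set of $p$ consecutive residues, the right-hand side integrates (after the obvious substitution $t = s_0 + r/p$ in the $r$-th term, with inner interval of length $1/p$) to $\int_0^1 \dim G_*^{(a_0+t,\,a_0+t+1/p)}(\bsigma;\F_p)\,\ud t$. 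This is not quite of the form appearing in Proposition~\ref{prop:betatotint}; to match it I would instead, from the start, apply Proposition~\ref{prop:smith} $n$ times for a large auxiliary $n$ (taking $b-a = n/p$ and using the interval $[a_0,a_0+n)$ as the fundamental domain), or equivalently invoke the additivity of $\betatot$ under iteration packaged already in the $n$-dependence of Proposition~\ref{prop:betatotint}: apply that proposition to $\bsigma$ with the same integer $n$ and observe
\begin{equation*}
    \int_0^1 \dim G_*^{(a_0+t,\,a_0+t+n)}(\bsigma;\F_p)\,\ud t - n|\bq|
\end{equation*}
bounds the integral of the telescoped sum via the exact triangles relating $G_*^{(c,c')}$, $G_*^{(c,c'')}$, $G_*^{(c',c'')}$ from Section~\ref{se:direct}, so that over a field $\dim G_*^{(c,c'')} \leq \dim G_*^{(c,c')} + \dim G_*^{(c',c'')}$. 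Chaining this with the rescaled Smith estimate yields
\begin{equation*}
    \int_0^1 \dim G_*^{(a+s,a+s+1)}(\bsigma^p;\F_p)\,\ud s - |\bq|
    \;\geq\; p\left(\int_0^1 \dim G_*^{(a_0+t,\,a_0+t+1)}(\bsigma;\F_p)\,\ud t - |\bq|\right),
\end{equation*}
and dividing by $2$ gives exactly $\betatot(\bsigma^p;\F_p)\geq p\,\betatot(\bsigma;\F_p)$.

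The main obstacle I expect is purely bookkeeping: making sure that the finitely many forbidden parameter values (the action values of $\bsigma$ and of $\bsigma^p$, together with their shifts by $r/p$) form a null set so that the integral identities are unaffected, and keeping track of the shifts $+r/p$ and the rescaling $s\leftrightarrow ps_0$ so that the $p$ copies produced by Proposition~\ref{prop:smith} reassemble precisely into $p$ copies of the integral for $\bsigma$ rather than $p$ copies of a translate. The subadditivity $\dim G_*^{(c,c'')}\leq \dim G_*^{(c,c')}+\dim G_*^{(c',c'')}$ over a field, used to pass from the $1/p$-length windows back to unit-length windows, is the only analytic input beyond the two cited propositions, and it is immediate from the long exact sequences of Section~\ref{se:direct}. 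Everything else is the same as in \cite[Corollary~6.3]{OnHoferZehnderGF}, so I would simply say that the argument is identical once Proposition~\ref{prop:smith} and Proposition~\ref{prop:betatotint} are in place.
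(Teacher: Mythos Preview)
Your overall plan---combine the integral formula (Proposition~\ref{prop:betatotint}) with the pointwise Smith inequality (Proposition~\ref{prop:smith})---is exactly the paper's approach, but your implementation has a genuine gap in the bookkeeping.

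The issue is your choice $n=1$ for $\bsigma^p$. With that choice, after the rescaling $s=ps_0$ and applying Proposition~\ref{prop:smith}, you correctly arrive at
\[
\int_0^1 \dim G_*^{(a+s,a+s+1)}(\bsigma^p;\F_p)\,\ud s
\;\geq\; p\int_0^1 \dim G_*^{(a_0+t,\,a_0+t+1/p)}(\bsigma;\F_p)\,\ud t,
\]
i.e.\ windows of length $1/p$ on the $\bsigma$-side. But this quantity is \emph{not} bounded below by $(2\betatot(\bsigma;\F_p)+|\bq|)/p$ times $p$: in barcode terms, a window of length $\ell$ detects from each $\Z$-orbit of finite bars the quantity $2\min(\ell,\text{length})$, so
\[
\int_0^1 \dim G_*^{(a_0+t,\,a_0+t+1/p)}(\bsigma;\F_p)\,\ud t
= \frac{|\bq|}{p} + 2\sum_{\text{finite bar orbits}} \min\!\left(\tfrac{1}{p},\text{length}\right)
\;\leq\; \frac{|\bq|}{p} + 2\betatot(\bsigma;\F_p),
\]
with the inequality going the wrong way. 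Your proposed repair via the subadditivity $\dim G_*^{(c,c'')}\leq \dim G_*^{(c,c')}+\dim G_*^{(c',c'')}$ also goes the wrong way: chaining $p$ windows of length $1/p$ into one of length $1$ gives
\[
p\int_0^1 \dim G_*^{(a_0+t,\,a_0+t+1/p)}(\bsigma;\F_p)\,\ud t
\;\geq\; \int_0^1 \dim G_*^{(a_0+t,\,a_0+t+1)}(\bsigma;\F_p)\,\ud t,
\]
which costs exactly the factor of $p$ you need. So the displayed inequality you claim to obtain by ``chaining'' does not follow.

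The fix (and what the paper does, following the unweighted case) is to take $n=p$ in Proposition~\ref{prop:betatotint} for $\bsigma^p$ rather than $n=1$. Then Proposition~\ref{prop:smith} applied with $pb-pa=p$, i.e.\ $b-a=1$, produces directly on the right-hand side $p$ shifted copies of $\dim G_*^{(\,\cdot\,,\,\cdot+1)}(\bsigma;\F_p)$. After the change of variables the $p$ shifts tile an interval of length $1$, giving
\[
\int_0^1 \dim G_*^{(a+t,a+t+p)}(\bsigma^p;\F_p)\,\ud t
\;\geq\; p\int_0^1 \dim G_*^{(a_0+u,\,a_0+u+1)}(\bsigma;\F_p)\,\ud u,
\]
and now Proposition~\ref{prop:betatotint} with $n=1$ for $\bsigma$ finishes the proof without any appeal to subadditivity. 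Your parenthetical ``taking $b-a=n/p$'' is pointing in this direction, but the precise requirement is that $n$ be a multiple of $p$; the minimal choice $n=p$ is all that is needed.
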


Another easy consequence of the integral formula is the
following proposition (a direct extension of
{\cite[Proposition~6.4]{OnHoferZehnderGF}}).

\begin{prop}\label{prop:universalcoef}
    For every tuple of small $\RS$-equivariant Hamiltonian
    diffeomorphisms $\bsigma$ with a finite
    number of associated fixed points in $\CP(\bq)$,
    there exists an integer $N\in\N$ such that
    for all prime number $p\geq N$,
    \begin{equation*}
        \betatot(\bsigma;\F_p) = \betatot(\bsigma;\Q).
    \end{equation*}
\end{prop}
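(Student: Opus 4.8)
The plan is to exploit the integral formula of Proposition~\ref{prop:betatotint} together with a universal-coefficient argument, mimicking the proof of \cite[Proposition~6.4]{OnHoferZehnderGF}. First I would fix the tuple $\bsigma$ and choose an auxiliary integer, say $n=1$, and a real number $a$ that avoids the (finitely many modulo $\Z$) action values of $\bsigma$, so that Proposition~\ref{prop:betatotint} expresses both $\betatot(\bsigma;\F_p)$ and $\betatot(\bsigma;\Q)$ as $\tfrac12\big(\int_0^1 \dim G_*^{(a+t,a+t+1)}(\bsigma;\F)\,\ud t - |\bq|\big)$, only differing in the coefficient field. Thus it suffices to show that $\dim G_*^{(a+t,a+t+1)}(\bsigma;\F_p) = \dim G_*^{(a+t,a+t+1)}(\bsigma;\Q)$ for all but finitely many $t\in[0,1)$, once $p$ is large enough, and that the finite exceptional set is uniform in $p\geq N$.

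The key step is a universal-coefficient comparison. The groups $G_*^{(a,b)}(\bsigma;R)$ are, up to shift, the relative singular homology of a pair of sublevel sets of a $C^1$-function on a compact space whose critical points are isolated and finite in number (the capped fixed points with action in the relevant window). Hence for each fixed generic pair $(a,b)$ the homology is finitely generated over $\Z$, and by the universal coefficient theorem the ranks over $\Q$ and over $\F_p$ agree as soon as $p$ does not divide the order of any torsion subgroup appearing in $H_*$ of that pair — equivalently, as soon as $p$ exceeds a bound $N(a,b)$ depending only on the (finite) torsion of those finitely many homology groups. The delicate point is that there are infinitely many pairs $(a+t,a+t+1)$ as $t$ ranges over $[0,1)$, so I cannot naively take a supremum of the $N(a,b)$. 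Here one uses that the persistence module $(G_*^{(-\infty,t)}(\bsigma;R))_t$ has only finitely many ``breakpoints'' modulo $\Z$ — namely the action values of $\bsigma$, of which there are finitely many in $[0,1)$ by Theorem~\ref{thm:spectral} and the finiteness of $\fix(\varphi)$. Between consecutive breakpoints the groups $G_*^{(a+t,a+t+1)}(\bsigma;R)$ are constant (the inclusion maps are isomorphisms), so there are in fact only finitely many distinct such groups as $t$ varies, and $N := \max N(a_i,b_i)$ over this finite list is a legitimate uniform bound.

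The main obstacle I anticipate is making precise the claim that the integrand $t\mapsto \dim G_*^{(a+t,a+t+1)}(\bsigma;\F)$ is a step function with finitely many values and jump points that are independent of $\F$ (at least for $\F$ of characteristic $0$ or large prime). One has to check that the jump locus is contained in the set $\{t : a+t$ or $a+t+1$ is an action value of $\bsigma\}$ — which is finite modulo $\Z$ and field-independent — and that on each complementary interval the dimension is computed from a fixed finitely generated abelian group via universal coefficients. Once this is established, I would conclude: choose $N$ so that for every prime $p\geq N$, $p$ divides no torsion coefficient of the finitely many relevant integral homology groups; then $\dim G_*^{(a+t,a+t+1)}(\bsigma;\F_p) = \dim G_*^{(a+t,a+t+1)}(\bsigma;\Q)$ for every $t$ that is not a jump point, hence for almost every $t\in[0,1)$, so the integrals agree and $\betatot(\bsigma;\F_p) = \betatot(\bsigma;\Q)$.
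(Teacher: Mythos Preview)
Your proposal is correct and follows exactly the approach the paper intends: the paper states that Proposition~\ref{prop:universalcoef} is ``another easy consequence of the integral formula'' and a direct extension of \cite[Proposition~6.4]{OnHoferZehnderGF}, and your argument---reducing to finitely many integral homology groups via the step-function structure of the integrand and then applying the universal coefficient theorem---is precisely that extension. The only point worth making explicit is that for $p\geq N$ you also need $p$ prime to each weight $q_j$ so that the generating functions homology over $\F_p$ is defined at all, but this is automatic once $N>\max_j q_j$.
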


\section{Proof of the main theorem}
\label{se:proof}

The proof of Theorem~\ref{thm:main} is essentially
identical to the one given in \cite{OnHoferZehnderGF},
we reproduce it with slight modifications for the reader's
convenience.
The proof of Theorem~\ref{thm:arnold} was given
below Theorem~\ref{thm:spectral}.

\begin{proof}[Proof of Theorem~\ref{thm:main}]
    Let $\bsigma$ be any tuple of $\RS$-equivariant Hamiltonian diffeomorphisms
    associated with $\varphi$, so that $N(\bsigma;\F) = N(\varphi;\F)$.
    Let us denote by $K(\bsigma;\F)$ the number of $\Z$-orbits of
    finite bars of the barcode associated with $\bsigma$ over the field $\F$.
    According to the universal coefficient theorem,
    one can assume that $\F=\Q$ if $\F$ has characteristic 0
    and $\F=\F_p$ if it has characteristic $p\neq 0$.

    Let us assume that $\F=\Q$.
    According to Proposition~\ref{prop:N},
    $N(\bsigma;\Q)>|\bq|$ implies that $K(\bsigma;\Q)>0$
    so the maximal length of a finite bar $\betamax(\bsigma;\Q) > 0$.
    According to Corollary~\ref{cor:betatotSmith},
    for all prime number $p$ prime with any of the weights,
    \begin{equation*}
        K(\bsigma^p;\F_p)\betamax(\bsigma^p;\F_p) \geq
        \betatot(\bsigma^p;\F_p) \geq p\betatot(\bsigma;\F_p).
    \end{equation*}
    Thus, by Proposition~\ref{prop:universalcoef},
    for all sufficiently large prime $p$,
    \begin{equation*}
        K(\bsigma^p;\F_p)\betamax(\bsigma^p;\F_p) \geq
        p\betatot(\bsigma;\Q) \geq p\betamax(\bsigma;\Q),
    \end{equation*}
    that is to say that $K(\bsigma^p;\F_p)\betamax(\bsigma^p;\F_p)$
    grows at least linearly with prime numbers $p$.
    According to Theorem~\ref{thm:betamax},
    $\betamax(\bsigma^p;\F_p) \leq 1$ so
    $K(\bsigma^p;\F_p)$ must diverge to $+\infty$
    with prime numbers $p$ and so must $N(\bsigma^p;\F_p)$
    by Proposition~\ref{prop:N}.
    Let $z_1,\ldots, z_n\in\CP^d$ be the fixed points of $\varphi$.
    According to Corollary~\ref{cor:gromollmeyer}, there exists $B>0$
    such that $\dim \lochom_*(\bsigma^p;z_k,t_j(z_k);\F_p) < B$ for all $k$,
    $j$,
    and all prime $p$ that does not divide any of the weights.
    Let $A\in\N$ be such that for all prime $p\geq A$,
    $N(\bsigma^p;\F_p) > nq_0 q_1\cdots q_d B$.
    Then, for all prime $p\geq A$, there must be at least one
    fixed point of $\varphi^p$ that is not one of the $z_k$'s,
    that is there must be at least one $p$-periodic point
    that is not a fixed point.
    Hence, the conclusion for the case $\F$ of characteristic 0.

    Let us assume that $\F=\F_p$ for some prime number $p$.
    By contradiction, let us assume that $\varphi$ has only finitely
    many periodic points of period belonging to
    $\{ p^k \ |\ k\in\N \}$.
    According to Corollary~\ref{cor:betatotSmith},
    \begin{equation*}
        \betatot(\bsigma^{p^k};\F_p) \geq p^k\betatot(\bsigma;\F_p),
        \quad \forall k\in\N,
    \end{equation*}
    in particular, $N(\bsigma^{p^k};\F_p)>|\bq|$ for all $k\in\N$.
    Thus, by taking a sufficiently large $p^k$-iterate of $\varphi$,
    one can assume that every periodic point of $\varphi$ of period $p^k$
    for some $k$ is an admissible fixed point of $\varphi$
    (see above Proposition~\ref{prop:gromollmeyer} for the definition
    of an admissible fixed point).
    According to Proposition~\ref{prop:gromollmeyer}, it implies
    that $N(\bsigma^{p^k};\F_p) = N(\bsigma;\F_p)$ for all $k\in\N$.
    But Corollary~\ref{cor:betatotSmith} together with
    Proposition~\ref{prop:N} imply that the left-hand side of this equation
    must diverge to $+\infty$ as $k$ grows,
    a contradiction.
\end{proof}

\appendix

\section{Homology properties of orbibundles}
\label{se:orbibundle}

In this appendix, we show a version of the Thom isomorphism and the existence of
the Gysin morphism for orbibundles.

We refer to \cite[\S 3]{Sat57} for a precise definition of the orbifold structure
of a smooth orbibundle 
(orbibundles are called $V$-bundles there).
We will only use the underlying topological spaces at stake
and work with the following topological version of orbifold
containing the underlying topological maps of the smooth definition.
A continuous map $\pi:E\to B$ is an $F$-orbibundle,
$F$ being a topological space or pair, if $B$ is covered
by open sets $(V_\alpha)$ satisfying the following trivialization property.
For all $\alpha$, there exist a topological space $U_\alpha$,
a finite group $\Gamma_\alpha$ acting continuously on $U_\alpha$,
a continuous action of $\Gamma_\alpha$ on $U_\alpha\times F$ of the form
\begin{equation*}
    \gamma\cdot (x,y) := (\gamma\cdot x, f_\alpha (\gamma,x,y)),\quad
    \forall (x,y)\in U_\alpha\times F,
    \forall \gamma\in\Gamma_\alpha,
\end{equation*}
for some map $f_\alpha$,
there also exist $\Gamma_\alpha$-invariant continuous map
$U_\alpha \to V_\alpha$ and $U_\alpha\times F\to \pi^{-1}(V_\alpha)$
inducing the respective homeomorphisms
$\varphi_\alpha : U_\alpha/\Gamma_\alpha \to V_\alpha$
and $\chi_\alpha : (U_\alpha\times F)/\Gamma_\alpha \to \pi^{-1}(V_\alpha)$
and making the following diagram commute:
\begin{equation*}
    \begin{gathered}
        \xymatrix{
            \pi^{-1}(V_\alpha) \ar[d]^-{\pi}
            & (U_\alpha\times F)/\Gamma_\alpha \ar[l]^-{\chi_\alpha}_-{\simeq}
            \ar[d]^-{\mathrm{pr}_1}
            & U_\alpha \times F \ar[l] \ar[d]^-{\mathrm{pr}_1} \\
            V_\alpha
            & U_\alpha/\Gamma_\alpha \ar[l]^-{\varphi_\alpha}_-{\simeq}
            & U_\alpha \ar[l]
        }
    \end{gathered},
\end{equation*}
where the $\mathrm{pr}_1$'s denote the projection on the first factor
and the unlabeled maps are quotient maps.

Given a ring of coefficients $R$ and an $R$-oriented manifold
$F$, we say that an $F$-orbibundle $\pi:E\to B$ is $R$-oriented
if there are a preferred orientation of each fiber $\pi^{-1}(b)$
and covering $(V_\alpha)$ such that the $\chi_\alpha$'s
and the $\Gamma_\alpha$-actions
respect the orientation fiberwise.
Such a covering of $\pi$ is called an $R$-oriented covering.

\begin{thm}[Thom isomorphism]\label{thm:Thom}
    Let $\pi:E\to B$ be an $(\R^n,0)$-orbibundle that is
    $R$-oriented (as an $\R^n$-orbibundle).
    Let us assume that the order of the finite linear groups $\Gamma_\alpha$ of
    an $R$-oriented covering of $\pi$ are prime with the characteristic of
    $R$ or that the characteristic of $R$ is zero.
    Then there exists a natural class $\tau\in H^n(E,E_0;R)$ called the Thom
    class of $\pi$ such that the respective morphisms
    $\alpha\mapsto \pi_* (\alpha\frown\tau)$
    and $u\mapsto \tau\smile\pi^* u$ are isomorphisms
    \begin{equation*}
        H_*(E,E_0;R) \xrightarrow{\simeq} H_{*-n}(B;R)
        \ \text{ and }\
        H^*(B;R) \xrightarrow{\simeq} H^{*+n}(E,E_0;R),
    \end{equation*}
    where $E_0\subset E$ denote the
    total space of the associated $(\R^n\setminus 0)$-orbibundle.
\end{thm}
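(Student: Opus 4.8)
The plan is to reduce the orbibundle Thom isomorphism to the classical (manifold) Thom isomorphism by working locally and patching with a Mayer--Vietoris / spectral sequence argument, using Lemma~\ref{lem:Borel} to pass between a chart $U_\alpha\times\R^n$ and its quotient $(U_\alpha\times\R^n)/\Gamma_\alpha$. First I would treat the \emph{local model}: on a trivializing open set $V_\alpha$, the pair $(\pi^{-1}(V_\alpha),\pi^{-1}(V_\alpha)_0)$ is homeomorphic to $\bigl((U_\alpha\times\R^n)/\Gamma_\alpha,(U_\alpha\times(\R^n\setminus 0))/\Gamma_\alpha\bigr)$, where $\Gamma_\alpha$ acts diagonally, linearly and orientation-preservingly on the $\R^n$ factor. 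Since the $\Gamma_\alpha$-action on $U_\alpha\times\R^n$ commutes with the projection to $U_\alpha$ and is fiberwise orientation-preserving, the classical Thom class $\tau_\alpha\in H^n(U_\alpha\times\R^n,U_\alpha\times(\R^n\setminus 0);R)$ of the trivial bundle is $\Gamma_\alpha$-invariant; by Lemma~\ref{lem:Borel} (whose hypothesis on $|\Gamma_\alpha|$ versus $\mathrm{char}(R)$ is exactly what we assume), $H^*(\pi^{-1}(V_\alpha),\cdot;R)\cong H^*(U_\alpha\times\R^n,\cdot;R)^{\Gamma_\alpha}$, so $\tau_\alpha$ descends to a class on $\pi^{-1}(V_\alpha)$, and cup product with it, composed with the (equivariant) classical Thom isomorphism, gives the local isomorphism $H^*(V_\alpha;R)\xrightarrow{\simeq}H^{*+n}(\pi^{-1}(V_\alpha),\pi^{-1}(V_\alpha)_0;R)$, and dually in homology.

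Next I would establish \emph{uniqueness and compatibility} of the local Thom classes on overlaps. On $V_\alpha\cap V_\beta$, both $\tau_\alpha$ and $\tau_\beta$ restrict to classes in $H^n\bigl(\pi^{-1}(V_\alpha\cap V_\beta),\pi^{-1}(V_\alpha\cap V_\beta)_0;R\bigr)$ that restrict, over each point $b$, to the chosen generator of $H^n(\pi^{-1}(b),\pi^{-1}(b)\setminus 0;R)\cong R$ dual to the preferred orientation (this is where the $R$-oriented covering hypothesis is used). The standard characterization of the Thom class as the unique class restricting to the orientation generator on every fiber --- which holds here after pulling back to the manifold chart and using the classical statement --- forces $\tau_\alpha=\tau_\beta$ on the overlap. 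Hence the local classes glue: by a Mayer--Vietoris patching over a finite good cover (or directly, since a global class is determined by compatible local data together with the fact that the restriction maps $H^n(E,E_0)\to H^n(\pi^{-1}(V_\alpha),\cdot)$ are part of an exact sequence whose connecting terms vanish in the relevant degrees because they are built from the locally-isomorphic-to-$H^*(V_\alpha)$ groups), there is a unique global $\tau\in H^n(E,E_0;R)$ restricting to each $\tau_\alpha$.

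Then the \emph{global isomorphism} follows by a Mayer--Vietoris induction on the number of open sets in a finite trivializing cover of $B$ (one may assume $B$ is paracompact with a finite good cover; in our application $B$ is compact). The two maps $\alpha\mapsto\pi_*(\alpha\frown\tau)$ and $u\mapsto\tau\smile\pi^*u$ are morphisms of Mayer--Vietoris sequences (for the covers $\{V_\alpha\}$ of $B$ and $\{\pi^{-1}(V_\alpha)\}$ of $E$, with the subspace $E_0$ along for the ride), they are isomorphisms on each piece $\pi^{-1}(V_\alpha)$ and on each intersection $\pi^{-1}(V_{\alpha_1}\cap\cdots\cap V_{\alpha_k})$ by the local model, so the five lemma propagates the isomorphism to unions, and after finitely many steps to all of $E$. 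Naturality is immediate from the naturality of $\tau$ and of cap/cup products.

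The main obstacle I anticipate is making the \emph{descent of the Thom class through the chart group actions} fully rigorous: one must verify that the diagonal $\Gamma_\alpha$-action on the trivial bundle $U_\alpha\times\R^n$ really is by bundle maps covering the (a priori only topological) action on $U_\alpha$ and fiberwise orientation-preserving linear maps, so that the classical Thom isomorphism is $\Gamma_\alpha$-equivariant and Lemma~\ref{lem:Borel} applies to both $E$ and $E_0$ simultaneously and compatibly with the long exact sequence of the pair --- the orientation hypothesis encoded in the definition of an $R$-oriented covering is precisely what is needed, but one must check it interacts correctly with the connecting homomorphism $H^{*}(\pi^{-1}(V_\alpha)_0)\to H^{*+1}(\pi^{-1}(V_\alpha),\pi^{-1}(V_\alpha)_0)$. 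The overlap-compatibility of the local Thom classes is the other delicate point, but it reduces cleanly to the uniqueness clause in the classical Thom isomorphism once one knows each $\tau_\alpha$ restricts to the orientation generator on every fiber.
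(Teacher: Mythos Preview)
Your proposal is correct and follows essentially the same approach as the paper: descend the classical Thom class locally via Lemma~\ref{lem:Borel} using the orientation-preserving hypothesis to obtain $\Gamma_\alpha$-invariance, then patch globally by Mayer--Vietoris. The paper is terser---after constructing the local classes $\tau_\alpha$ exactly as you do, it simply observes that the remainder ``follows \emph{verbatim} the proof of the usual Thom isomorphism theorem given in \cite[\S 10]{MS74}'', which is precisely the Mayer--Vietoris induction you spell out.
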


\begin{proof}
    Let us define the restriction of the Thom class $\tau_\alpha$
    to $(E,E_0)\cap \pi^{-1}(V_\alpha)$
    for any $\alpha$.
    The key point of this generalization is that
    the quotient maps associated with $U_\alpha$
    and $U_\alpha\times (\R^n,\R^n\setminus 0)$ induce isomorphism in cohomology
    (and homology) over $R$ according to
    Lemma~\ref{lem:Borel}:
    \begin{equation}\label{eq:ThomBorel}
        H^*((E,E_0)\cap \pi^{-1}(V_\alpha);R)
        \xrightarrow{\simeq} 
        H^*\left(U_\alpha\times (\R^n,\R^n\setminus 0);R\right)^{\Gamma_\alpha},
    \end{equation}
    indeed, the characteristic of $R$ is either $0$ or prime
    with the order of $\Gamma_\alpha$.
    Let $\tau_\alpha'\in H^n(U_\alpha\times (\R^n,\R^n\setminus 0))$
    be the Thom class of the trivial $(\R^n,0)$-bundle
    $U_\alpha\times (\R^n,0)\to U_\alpha$.
    Since the action of $\Gamma_\alpha$ on $U_\alpha\times\R^n$
    sends oriented fiber to oriented fiber, it preserves
    $\tau_\alpha'$ and the Thom isomorphism on this trivial bundle
    is $\Gamma_\alpha$-equivariant.
    Therefore, $\tau_\alpha$ can naturally be defined as the
    inverse image of $\tau'_\alpha$ under (\ref{eq:ThomBorel}).
    The proof now follows \emph{verbatim} the proof of the usual
    Thom isomorphism theorem given in \cite[\S 10]{MS74}.
\end{proof}

Similarly to $\sphere{n}$-bundle,
one can canonically
include any $\sphere{n}$-orbibundle
inside an $(\R^{n+1},0)$-orbibundle $E\to B$
and consider the long exact sequence of pair $(E,E_0)$
to obtain the following corollary.

\begin{cor}[The Gysin long exact sequence]\label{cor:Gysin}
    Let $\pi : E \to B$ be an $\sphere{n}$-orbibundle that
    is $R$-oriented.
    Let us assume that the order of the finite linear groups $\Gamma_\alpha$ of
    an $R$-oriented covering of $\pi$ are prime with the characteristic of
    $R$ or that the characteristic of $R$ is zero.    
    Then there exists a natural long exact sequence in homology
    called the Gysin long exact sequence
    \begin{equation*}
        \cdots \to H_{*+1}(E;R) \xrightarrow{\pi_*}
        H_{*+1}(B;R) \xrightarrow{\cdot\frown e}
        H_{*-n}(B;R) \xrightarrow{\pi^*}
        H_*(E;R) \to \cdots,
    \end{equation*}
    where $e\in H^{n+1}(B;R)$ is called the Euler class of the orbibundle,
    it is the pullback of the Thom class of the associated
    $(\R^{n+1},0)$-orbibundle,
    and the natural morphism $\pi^* : H_*(B;R)\to H_{*+n}(E;R)$
    is called the Gysin morphism.
\end{cor}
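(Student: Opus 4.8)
The plan is to transport, into the orbibundle category, the classical derivation of the Gysin sequence of an $\sphere{n}$-bundle from the Thom isomorphism of its fiberwise cone. Let $c_n:=\bigl(\sphere{n}\times[0,1)\bigr)/\bigl(\sphere{n}\times\{0\}\bigr)$ be the open cone on $\sphere{n}$, which is homeomorphic to $\R^{n+1}$ with the cone point corresponding to $0$. First I would form the \emph{fiberwise cone}
\begin{equation*}
    \widehat{E}:=\bigl(E\times[0,1)\bigr)\sqcup B\ \big/\ \bigl((e,0)\sim\pi(e)\bigr),
    \qquad \widehat\pi([e,s]):=\pi(e),\quad \widehat\pi(b):=b,
\end{equation*}
and check that $\widehat\pi:\widehat E\to B$ is an $(\R^{n+1},0)$-orbibundle in the sense of this appendix: over a trivializing open set $V_\alpha$ of $\pi$ with data $(U_\alpha,\Gamma_\alpha,\chi_\alpha,f_\alpha,\ldots)$ one equips $U_\alpha\times c_n$ with the coned action $\gamma\cdot\bigl(x,[y,s]\bigr):=\bigl(\gamma\cdot x,[f_\alpha(\gamma,x,y),s]\bigr)$, which is visibly of the prescribed form and fixes the cone section; the coned charts and coned transition homeomorphisms glue exactly because those of $\pi$ do, so $\widehat\pi$ is a genuine $(\R^{n+1},0)$-orbibundle over the \emph{same} covering $(V_\alpha)$ with the \emph{same} finite groups $\Gamma_\alpha$. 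Moreover the fiberwise orientations of $\pi$ induce fiberwise orientations of $\widehat\pi$ via the natural isomorphism $H_{n+1}(c_n,c_n\setminus 0)\cong\widetilde H_n(\sphere{n})$; since this isomorphism is compatible with cone-extended homeomorphisms of $\sphere{n}$, the $\chi_\alpha$'s and the $\Gamma_\alpha$-actions respect it, so $\widehat\pi$ is $R$-oriented and still satisfies the hypothesis on the orders of the $\Gamma_\alpha$'s. Hence Theorem~\ref{thm:Thom} applies to $\widehat\pi$, providing a Thom class $\tau\in H^{n+1}(\widehat E,\widehat E_0;R)$ and isomorphisms $H_*(\widehat E,\widehat E_0;R)\cong H_{*-n-1}(B;R)$, where $\widehat E_0$ is the total space of the associated $(\R^{n+1}\setminus 0)$-orbibundle, i.e. $\widehat E\setminus(\text{cone section})$.

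Next I would feed three homotopy equivalences into the long exact sequence of the pair $(\widehat E,\widehat E_0)$. The fiberwise contraction of $c_n$ onto its cone point gives a deformation retraction of $\widehat E$ onto the cone section $\cong B$, so $\widehat\pi$ is a homotopy equivalence with homotopy inverse the cone section $s_0$; the fiberwise deformation retraction of $c_n\setminus 0=\sphere{n}\times(0,1)$ onto $\sphere{n}\times\{1/2\}$ retracts $\widehat E_0$ onto a copy of $E$, under which the inclusion $\widehat E_0\hookrightarrow\widehat E$ becomes $\pi$; and Theorem~\ref{thm:Thom} identifies $H_*(\widehat E,\widehat E_0;R)$ with $H_{*-n-1}(B;R)$. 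Substituting these identifications into
\begin{equation*}
    \cdots\to H_k(\widehat E_0)\to H_k(\widehat E)\to H_k(\widehat E,\widehat E_0)\xrightarrow{\ \partial\ }H_{k-1}(\widehat E_0)\to\cdots
\end{equation*}
yields a long exact sequence of the announced shape, and I would \emph{define} the Gysin morphism $\pi^*:H_*(B;R)\to H_{*+n}(E;R)$ to be the composite $H_*(B)\cong H_{*+n+1}(\widehat E,\widehat E_0)\xrightarrow{\partial}H_{*+n}(\widehat E_0)\cong H_{*+n}(E)$, and the Euler class $e:=s_0^*\bigl(\tau|_{\widehat E}\bigr)\in H^{n+1}(B;R)$ (equivalently $\widehat\pi^*e=\tau|_{\widehat E}$, using $s_0\widehat\pi\simeq\id_{\widehat E}$).

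It then only remains to check that the arrow $H_k(\widehat E)\to H_k(\widehat E,\widehat E_0)\cong H_{k-n-1}(B)$ is cap product with $e$. Writing $j:(\widehat E,\varnothing)\to(\widehat E,\widehat E_0)$, the Thom isomorphism is $\beta\mapsto\widehat\pi_*(\beta\frown\tau)$, so by naturality of the cap product the composite sends $\alpha\in H_k(\widehat E)$ to $\widehat\pi_*\bigl((j_*\alpha)\frown\tau\bigr)=\widehat\pi_*\bigl(\alpha\frown j^*\tau\bigr)$; since $j^*\tau=\widehat\pi^*e$, the projection formula turns this into $\widehat\pi_*(\alpha\frown\widehat\pi^*e)=(\widehat\pi_*\alpha)\frown e$, which under the isomorphism $\widehat\pi_*:H_k(\widehat E)\xrightarrow{\sim}H_k(B)$ is exactly $\cdot\frown e$. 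Naturality of the whole sequence with respect to pullbacks of $R$-oriented $\sphere{n}$-orbibundles then follows from the naturality of the Thom class in Theorem~\ref{thm:Thom} together with that of the long exact sequence of a pair. The only genuinely delicate point — the one I would treat with the most care — is the first step: verifying that the fiberwise cone is a bona fide $(\R^{n+1},0)$-orbibundle in the precise sense of the appendix, with group actions of the prescribed form and an $R$-oriented covering inherited from $\pi$; once that is in place, everything else is the standard Thom-class bookkeeping transported verbatim from the classical manifold case, cf. \cite{MS74}.
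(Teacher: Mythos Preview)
Your proposal is correct and follows essentially the same route as the paper: the paper's proof is a single sentence before the corollary stating that one canonically includes the $\sphere{n}$-orbibundle inside an $(\R^{n+1},0)$-orbibundle and reads off the Gysin sequence from the long exact sequence of the pair $(E,E_0)$ combined with the Thom isomorphism. You have simply unpacked this in full detail --- explicitly constructing the fiberwise cone, verifying it inherits the orbibundle structure with the same groups $\Gamma_\alpha$ and $R$-orientation, and identifying the maps (in particular the cap-with-$e$ step via the projection formula) --- all of which the paper leaves implicit.
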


\bibliographystyle{amsplain}
\bibliography{biblio} 

\end{document}